\definecolor{labelkey}{rgb}{0,0.08,0.45}
\definecolor{refkey}{rgb}{0,0.6,0.0}
\definecolor{Brown}{rgb}{0.45,0.0,0.05}
\definecolor{dgreen}{rgb}{0.00,0.49,0.00}
\definecolor{dblue}{rgb}{0,0.08,0.75}
\definecolor{overleaf_mostly_pure_blue}{rgb}{0, .502, 1} 
\definecolor{overleaf_dark_moderate_blue}{rgb}{.353, .361, .678} 
\definecolor{overleaf_dark_moderate_cyan}{rgb}{.247, .498, .498} 
\definecolor{colorhexa_dark_blue}{rgb}{0, .4, .6} 
\definecolor{colorhexa_dark_orange}{rgb}{.6, .2, 0} 
\newtheorem{theorem}{Theorem}[section]
\newtheorem{corollary}[theorem]{Corollary}
\newtheorem{lemma}[theorem]{Lemma}
\newtheorem{fact}[theorem]{Fact}
\newtheorem{proposition}[theorem]{Proposition}
\theoremstyle{definition}
\newtheorem{algorithm}[theorem]{Algorithm}
\newtheorem{remark}[theorem]{Remark}
\numberwithin{equation}{section}
\providecommand{\norm}[1]{\lVert#1\rVert}
\providecommand{\scalarp}[1]{\langle#1\rangle}
\providecommand{\abs}[1]{\lvert#1\rvert}
\newcommand{\EE}{\ensuremath{\mathsf E}}
\newcommand{\Js}{\ensuremath{\mathsf{J}}}
\newcommand{\As}{\ensuremath{\mathsf{A}}}
\newcommand{\Bs}{\ensuremath{\mathsf{B}}}
\newcommand{\Id}{\ensuremath{\mathsf{Id}}}
\newcommand{\PP}{\ensuremath{\mathsf P}}
\newcommand{\R}{\ensuremath \mathbb{R}}
\newcommand{\HH}{\ensuremath \mathsf{H}}
\newcommand{\GG}{\ensuremath \mathsf{G}}
\newcommand{\WW}{\ensuremath \mathsf{W}}
\newcommand{\VV}{\ensuremath \mathsf{V}}
\newcommand{\XX}{\ensuremath \mathsf{X}}
\newcommand{\KK}{\ensuremath \mathsf{K}}
\newcommand{\Gammas}{\ensuremath \mathsf{\Gamma}}
\newcommand{\pp}{\ensuremath \mathsf{p}}
\newcommand{\xs}{\ensuremath \mathsf{x}}
\newcommand{\bxs}{\ensuremath \bm{\mathsf{x}}}
\newcommand{\ds}{\ensuremath \mathsf{d}}
\newcommand{\bds}{\ensuremath \bm{\mathsf{d}}}
\newcommand{\xx}{\ensuremath \mathsf{x}}
\newcommand{\bs}{\ensuremath \mathsf{b}}
\newcommand{\ww}{\ensuremath \mathsf{w}}
\newcommand{\zz}{\ensuremath \mathsf{z}}
\newcommand{\uu}{\ensuremath \mathsf{u}}
\newcommand{\yy}{\ensuremath \mathsf{y}}
\newcommand{\uuu}{\alpha}
\newcommand{\N}{\ensuremath \mathbb{N}}
\newcommand{\iter}{\ensuremath k}
\newcommand{\prox}{\ensuremath{\text{\textsf{prox}}}}
\DeclareMathOperator*{\dom}{\ensuremath{\text{\textrm{dom}}}}
\DeclareMathOperator*{\argmin}{\text{\rm{argmin}}}
\newcommand{\bHH}{\ensuremath{\bm{\mathsf{H}}}}
\newcommand{\bSS}{\ensuremath{\mathsf{S}}}
\newcommand{\bWW}{\ensuremath{{\WW}}}
\newcommand{\bVV}{\ensuremath{{\VV}}}
\newcommand{\bXX}{\ensuremath{{\XX}}}
\newcommand{\bAs}{\ensuremath{{\As}}}
\newcommand{\bxx}{\ensuremath \bm{\xx}}
\newcommand{\byy}{\ensuremath \bm{\yy}}
\newcommand{\bzz}{\ensuremath \bm{\zz}}
\newcommand{\buu}{\ensuremath \bm{\uu}}
\newcommand{\bbs}{\ensuremath \bs}
\newcommand{\bx}{\ensuremath \bm{x}}
\newcommand{\by}{\ensuremath \bm{y}}
\newcommand{\bw}{\ensuremath \bm{w}}
\newcommand{\bvarphi}{\ensuremath{{\varphi}}}
\newcommand{\bDelta}{\ensuremath{{\bm{\Delta}}}}
\newcommand{\bGammas}{\ensuremath{{\Gammas}}}
\newcommand{\bbar}{\bm \bar}
\newcommand{\bhat}{\bm \hat}
\begin{document}

\title{ {\sffamily Convergence of an Asynchronous Block-Coordinate Forward-Backward Algorithm for Convex Composite Optimization}}
\author{Cheik Traor\'e$\,^{\textrm{\Letter}}\,$\thanks{Malga Center, DIMA, Universit\`a degli Studi di Genova, Genova, Italy (traore@dima.unige.it). {\scriptsize This project has received funding from the European Union’s Horizon 2020 research and innovation programme under the Marie Skłodowska-Curie grant agreement No 861137.}}\,,\ \
Saverio Salzo\thanks{Istituto Italiano di Tecnologia, Genova, Italy (saverio.salzo@iit.it).}\ \
and Silvia Villa\thanks{Malga Center, DIMA, Universit\`a degli Studi di Genova, Genova, Italy (silvia.villa@unige.it). {\scriptsize She acknowledges the financial support of the European
Research Council (grant SLING 819789), the AFOSR projects FA9550-17-1-0390, FA8655-22-1-7034, and BAAAFRL-
AFOSR-2016-0007 (European Office of Aerospace Research and Development), and the EU H2020-MSCA-RISE project NoMADS - DLV-777826.}}
        }
\date{}
\maketitle
\begin{abstract}
In this paper, we study the convergence properties of a randomized block-coordinate descent
algorithm for the minimization of a composite convex objective function,
where the block-coordinates are updated asynchronously and randomly according to an arbitrary
probability distribution. We prove
that the iterates generated by the algorithm form a stochastic quasi-Fej\'er sequence and thus
converge almost surely to a minimizer of the objective function.
Moreover, we prove a general sublinear rate of convergence in expectation for the function values
and a linear rate of convergence in expectation under an error bound condition of Tseng type. Under the same condition  strong convergence of the iterates is provided as well as their linear convergence rate.
\end{abstract}

\vspace{1ex}
\noindent
{\bf\small Keywords.} {\small Convex optimization, asynchronous algorithms,
randomized block-coordinate descent, error bounds,
stochastic quasi-Fej\'er sequences, forward-backward algorithm, convergence rates.}\\[1ex]
\noindent
{\bf\small AMS Mathematics Subject Classification:} {\small 65K05, 90C25, 90C06, 49M27}

\section{Introduction}

We consider the composite minimization problem
\begin{align}
\label{mainprob}
\underset{\bxx \in \bHH}{\text{minimize }} F(\bxx) := f(\bxx) + g(\bxx),\qquad 
g(\bm{\bxx}) := \sum_{i=1}^m g_i(\xx_i),
\end{align}
where $\bHH$ is the direct sum of $m$ separable real Hilbert spaces 
$(\HH_i)_{1 \leq i \leq m}$, that is,
$\bHH = \bigoplus_{i=1}^{m} \HH_i$ and 
the following assumptions are satisfied unless stated otherwise.
\begin{enumerate}[label=A\arabic*]
\item \label{eq:A1} $f \colon \bHH \to \mathbb{R} $ is convex and differentiable.
\item \label{eq:A2} For every $i \in \{1, \cdots, m\}$, 
$g_i \colon \HH_i \to \left]-\infty,+\infty\right]$ 
is proper convex and lower semicontinuous.
\item \label{eq:A3} For all $\bxx \in \bHH$ and $i \in \{1, \cdots, m\}$, the map  
$\nabla f(\xx_1, \dots,\xx_{i-1}, \cdot, \xx_{i+1}, \dots, \xx_m)\colon \HH_i \to \bHH$ is Lipschitz continuous 
with constant $L_\mathrm{res}>0$ and the map
$\nabla_i f(\xx_1, \dots,\xx_{i-1}, \cdot, \xx_{i+1}, \dots, \xx_m)\colon \HH_i \to \HH_i$ is Lipschitz continuous 
with constant $L_i$. Note that $L_{\max}\coloneqq \max_i L_i   \leq L_\mathrm{res}$ and $L_{\min} \coloneqq \min_i L_i$.
\item \label{eq:A4} $F$ attains its minimum $F^*:= \min F$ on \bHH.
\end{enumerate}

To solve problem \ref{mainprob}, we use the following asynchronous block-coordinate descent algorithm. It is an extension of the parallel block-coordinate proximal gradient method considered in \cite{salzo2021parallel} to the asynchronous setting, where an {\em inconsistent} delayed gradient vector may be processed at each iteration.

\begin{algorithm}
\label{algoAsymain}

Let $(i_k)_{k \in \N}$ be a sequence of i.i.d.~random variables with values
in $[m]:=\{1, \dots, m\}$ and $\pp_i$ be the probability of the event $\{i_k = i\}$, for every $i \in [m]$. Let $(\bds^k)_{k \in \N}$ be a sequence of 
integer delay vectors, $\bds^k = (\ds_1^k, \dots, \ds_m^k) \in \N^m$ such that 
$\max_{1\leq i\leq m} \ds^k_i \leq \min\{k,\tau\}$ for some $\tau \in \N$. This delay vector is deterministic and independent from the block coordinates selection process $(i_k)_{k \in \N}$.
Let $(\gamma_i)_{1 \leq i \leq m} \in \R_{++}^m$ and
$\bx^0 =(x^0_{1}, \dots, x^0_{m}) \in \bHH$
be a constant random variable. Iterate
 
\vspace{-2.5ex}
\begin{equation}
\label{eq:algoPRCD2}
\begin{array}{l}
\text{for}\;k=0,1,\ldots\\
\left\lfloor
\begin{array}{l}
\text{for}\;i=1,\dots, m\\[0.7ex]
\left\lfloor
\begin{array}{l}
x^{k+1}_i = 
\begin{cases}
\prox_{\gamma_{i_{k}} g_{i_{k}}} \big(x^k_{i_{k}} - \gamma_{i_{k}} \nabla_{i_{k}} f (\bx^{k-\bds^k})\big) &\text{if } i=i_{k}\\
x^k_i &\text{if } i \neq i_{k},
\end{cases}
\end{array}
\right.
\end{array}
\right.
\end{array}
\end{equation}
where $\bx^{k-\bds^k} = (x_1^{k - \ds^k_1}, \dots, x_m^{k - \ds^k_m})$.
\end{algorithm}
In this work, we assume the following stepsize rule
\begin{equation}
(\forall\, i \in [m])\quad\gamma_i( L_{i} 
+ 2\tau L_{\mathrm{res}}\pp_{\max}/ \sqrt{\pp_{\min}})< 2,
\end{equation}
where $\pp_{\max} := \max_{1 \leq i \leq m} \pp_i$ and $\pp_{\min} 
:= \min_{1 \leq i \leq m} \pp_i$.
If there is no delay, namely $\tau = 0$, the usual stepsize rule $\gamma_i < 2/L_i$ is obtained \cite{ComWaj05,salzo2021book}. 

The presence of the delay vectors in the above algorithm 
allows to describe a parallel computational model on multiple cores, as we explain below. 
\subsection{Asynchronous models}
In this section we discuss an example of a parallel computational model, occurring in 
shared-memory system architectures, which can be covered by the proposed algorithm.
Consider a situation where we have a machine with multiple cores. They all have access to 
a shared data $\bx = (x_1, \dots, x_m)$ and each core updates a block-coordinate $x_i$, $i \in[m]$,
asynchronously without waiting for the others. The iteration's counter $k$ is increased any time 
a component of $\bx$ is updated. When a core is given a coordinate to update, it has to read 
from the shared memory and compute a partial gradient. While performing these two operations, 
the data $\bx$ may have been updated by other cores. So, when the core is updating its assigned
coordinate at iteration $k$, the gradient might no longer be up to date. This phenomenon is
modelled by using a delay vector $\bds^k$ and evaluating the partial gradient at $\bx^{k-\bds^k}$
as in Algorithm \ref{algoAsymain}.
Each component of the delay vector reflects how many times the
corresponding coordinate of $\bx$ have been updated since the core has read this particular coordinate from the shared memory. Note that different delays among the coordinates may arise since the shared data 
may be updated during the reading phase, so that the partial gradient ultimately is computed at 
a point which may not be consistent with any past instance of the shared data.
This situation is called \emph{inconsistent read} \cite{bertsekas1989parallel} and, in practice, allows a  reading phase without any lock.
By contrast, in a \emph{consistent read} model \cite{liu2014asynchronous,niu2011hogwild}, 
a lock is put during the reading phase and the delay originates only while computing the 
partial gradient. The delay is the same for all the block-coordinates, so that the value read 
by any core is a past instance of the shared data. However,  for our theoretical study it does not make any difference considering an inconsistent or a consistent reading setting, because in the end only the maximum delay matters. In the literature other paper also consider the inconsistent read, see \cite{liu2015asynchronous, cannelli2019asynchronous, davis2016asynchronous}.

We remark that, in our setting, for all $k \in \N$, the delay vector $\bds^k$ is considered to be
a parameter that does not dependent on the random variable $i_k$, similarly to the works
\cite{liu2015asynchronous, liu2014asynchronous, davis2016asynchronous, hannah2018unbounded}. In this way, the stochastic attribute of the sequence $(\bx_k)_{k \in \N}$ is not determined by the delay, but it only comes from the stochastic selection of the block-coordinates. Some papers consider the case where the delay vector is a stochastic variable that may depend on 
$i_k$ \cite{sun2017asynchronous,cannelli2019asynchronous} or that it is unbounded
\cite{sun2017asynchronous,hannah2018unbounded}. Those setting are natural extensions to our work that we are considering for future work. Finally, a completely deterministic model, both in 
the block's selection and delays is studied in \cite{combettes2018asynchronous}.

 \subsection{Related work}

The topic on parallel asynchronous algorithm is not a recent one. In 1969, Chazan and Miranker
\cite{chazan1969chaotic} presented an asynchronous method for solving linear equations. Later on,
Bertsekas and Tsitsiklis \cite{bertsekas1989parallel} proposed an \emph{inconsistent read} 
model of asynchronous computation.   Due to the availability of large amount of data and the
importance of large scale optimization, in recent years we have witnessed a surge of interest 
in asynchronous algorithms. They have been studied and adapted to many optimization problems 
and methods such as stochastic gradient descent
\cite{agarwal2011distributed,niu2011hogwild,feyzmahdavian2016asynchronous,paine2013gpu,lian2016comprehensive}, 
randomized Kaczmarz algorithm \cite{liu2014asynchronous2}, and stochastic coordinate descent
\cite{avron2015revisiting, liu2014asynchronous, peng2016arock, um2020solver,sun2017asynchronous}.

In general, stochastic algorithms can be divided in two classes. The first one is when the 
function $f$ is an expectation i.e., $f(\xx) = \EE[h(\xx ; \xi)]$. At each iteration 
$k$ only a stochastic gradient  $\nabla h(\cdot ; \xi_k)$ is computed based on the current 
sample $\xi_k$. In this setting, many asynchronous versions have been proposed, where delayed
stochastic gradients are considered, see \cite{nedic2001distributed, feyzmahdavian2016asynchronous,DBLP:journals/corr/abs-1911-03444, Chen_2021, lian2015asynchronous, mai2020convergence}. 
The second class, which is the one we studied, is that of randomized block-coordinate methods. Below we describe the related literature.

The work  \cite{liu2015asynchronous} studied a problem and a model of asynchronicity which is
similar to ours, but the proposed algorithm AsySPCD requires that the random variables 
$(i_k)_{k \in \N}$ are uniformly distributed (i.e, $p_i =1/m$) and that the stepsize is the 
same for all the  block-coordinates. This latter assumption is an important limitation, since 
it does not exploit the possibility of adapting the stepsizes to the block-Lipschitz constants 
of the partial gradients, hence allowing longer steps along block-coordinates. A linear rate of
convergence is also obtained by exploiting a quadratic growth condition which is essentially
equivalent to our error bound condition \cite{drusvyatskiy2018error}. For a discussion on the limitations of \cite{liu2015asynchronous} and the improvements we bring, see Remark \ref{rmk:20211216a} point \ref{item:20211216a} and Section~\ref{sect:20220825a} on numerical experiments.

In the nonconvex case, \cite{davis2016asynchronous} considers an asynchronous algorithm which may
select the blocks both in an almost cyclic manner or randomly with a uniform probability. In the
latter case, it is proved that the cluster points of the sequence of the iterates are almost surely
stationary points of the objective function. However, the convergence of the whole sequence is not
provided, nor is given any rate of  convergence for the function values. Moreover, under the 
Kurdyka-{\L}ojasiewicz (KL) condition \cite{drusvyatskiy2018error, bolte2017error}, linear
convergence is also derived, but it is restricted  to the deterministic case.

To conclude, we note that our results, when specialized to the case of zero delays,  fully 
recover the ones given in  \cite{salzo2021parallel}.
\subsection{Contributions}
The main contributions of this work are summarized below:
\begin{itemize}
\item We first prove the almost sure weak convergence of the iterates $(\bx^k)_{k \in \N}$,
generated by Algorithm \ref{algoAsymain}, to a  random variable $\bx^*$ taking values in 
$\argmin F$. At the same time, we prove a sublinear rate of convergence of the function values 
in expectation, i.e, $\displaystyle \EE[F(\bx^k)] - \min F = o(1/k)$. We also provide for the 
same quantity an explicit rate  of $\mathcal{O}(1/k)$, see Theorem~\ref{thm:main1}. 
\item Under an error bound condition of Luo-Tseng type, on top of the strong convergence a.s of 
the iterates, we prove linear convergence in expectation of the function values and in mean of 
the iterates, see Theorem \ref{thm:main3}. 
\end{itemize}
We improve the state-of-the-art under several aspects: we consider an arbitrary probability for 
the selection of the blocks; the adopted stepsize rule improves over the existing ones, and
coincides with the one in \cite{davis2016asynchronous} in the special case of uniform selection 
of the blocks --- in particular, it allows for larger stepsizes when the number of blocks grows;
the  almost sure convergence of the iterates in the convex and stochastic setting is new and relies
on a stochastic quasi-Fejerian analysis; linear convergence under an error bound condition is also
new in the asynchronous stochastic scenario. 

The rest of the paper is organized as follows. In the next subsection we set up basic notation.
In Section~\ref{sec:preliminaries} we recall few facts and we provide some preliminary results. 
The general convergence analysis is given in Section~\ref{sec:convergence} where the main
Theorem~\ref{thm:main1} is presented. Section~\ref{sec:errorbound} contains the 
convergence theory under an additional error bound condition, while applications are discussed in
Section~\ref{sec:applications}. The majority of proofs are postponed to Appendices 
\ref{AppendixA} and \ref{sec:appB}.

\subsection{Notation}
We set $\R_+ = \left[0,+\infty\right[$
and $\R_{++} = \left]0,+\infty\right[$.
For every integer $\ell \geq 1$ we define $[\ell] = \{1, \dots, \ell\}$. For all $i \in [m]$, we denote indifferently the scalar products of $\bHH$ and $\HH_i$ by $\langle\cdot, \cdot\rangle$ and:
\begin{align*}
    (\forall \bm{\bxx} =(\xx_1, \cdots, \xx_m), \bm{\byy} =(\yy_1, \cdots, \mathrm{y}_m) \in \bHH) \quad \langle \bm{\bxx}, \bm{\byy} \rangle = \sum_{i=1}^{m} \langle \xx_i, \yy_i \rangle.
\end{align*}
$\|\cdot\|$ and $|\cdot|$ represent the norms associated to their scalar product
in $\bHH$ and in any of $\HH_i$ respectively. We also consider the canonical embedding, for all $i = 1,2, \cdots, m$,
$\Js_i\colon\HH_i \rightarrow \bHH$, $\xx_i \mapsto (0,\cdots, 0, \xx_i,0,\cdots,0)$, with $x_i$ in the $i^{th}$ position.
Random vectors and variables are defined on the underlying probability space $(\Omega, \mathfrak{A}, \PP)$. The default font is used for random variables while sans serif font is used for their realizations or deterministic variables.
Let $(\alpha_i)_{1 \leq i \leq m} \in \R^m_{++}$.  The direct sum operator $\bAs = \bigoplus_{i=1}^m \alpha_i \Id_i$, where $\Id_i$ is the identity operator on $\HH_i$, is 
\begin{align*}
    \bAs \colon \bHH &\rightarrow \bHH \\
    \bxx=(\xx_i)_{1 \leq i \leq m} &\mapsto (\alpha_i \xx_i)_{1 \leq i \leq m}
\end{align*}
This operator defines an equivalent scalar product on $\bHH$  as follows
\begin{equation*}
(\forall\, \bxx \in \bHH)(\forall\, \byy \in \bHH)\qquad\scalarp{\bxx, \byy}_{\bAs} = \scalarp{\bAs \bxx, \byy} 
= \sum_{i=1}^m \alpha_i \scalarp{\xx_i, \yy_i},
\end{equation*}
which gives the norm $\norm{\bxx}_{\bAs}^2 
= \sum_{i=1}^m \alpha_i |\xx_i|^2$. 
We let 
\begin{equation*}
\bVV=\bigoplus_{i=1}^m \pp_i \Id_i, \quad \bGammas^{-1} = \bigoplus_{i=1}^m \frac{1}{\gamma_i} \Id_i,
\quad\text{and}\quad \bWW = \bigoplus_{i=1}^m \frac{1}{\gamma_i\pp_i} \Id_i,
\end{equation*}
where for all $i \in [m]$, $\gamma_i$ and $\pp_i$ are defined in Algorithm \ref{algoAsymain}. We set  $\pp_{\max }:=\max_{1 \leq i \leq m} \pp_{i}$ and $\pp_{\min }:=\min_{1 \leq i \leq m} \pp_{i}.$
Let $\bvarphi\colon \bHH \to \left]-\infty,+\infty\right]$ be proper, convex, and lower semicontinuous. The domain of $\bvarphi$ is $\dom \bvarphi 
= \{\bxx \in \HH \,\vert\, \bvarphi(\bxx)<+\infty\}$ and the set 
of minimizers of $\bvarphi$ is $\argmin \bvarphi 
= \{\bxx \in \bHH \,\vert\, \bvarphi(\bxx) = \inf \bvarphi\}$. We recall that the proximity operator of $\varphi$ is $\prox_{\varphi}(\bx) = \argmin_{\by \in \bHH} \varphi(\by) + \frac{1}{2} \norm{\by-\bx}^2$.
If the function $\bvarphi\colon \bHH\to \R$ is differentiable, 
then for all $\buu, \bxx \in \bHH$ and any symmetric positive definite operator $\bAs$, we have $\scalarp{\nabla^{\bAs} \bvarphi(\bxx), \buu}_{\bAs} = \scalarp{\nabla \varphi(\bxx), \buu}$, where $\nabla^{\bAs}$
denotes the gradient operator in the norm $\norm{\cdot}_{\bAs}$. If $\bSS \subset \bHH$ and $\bxx \in \bHH$, we set $\mathrm{dist}_{\bAs}(\bxx,\bSS) = \inf_{\bm{\zz} \in \bSS} \norm{\bxx - \bm{\zz}}_{\bAs}$. We also denote by $\prox_{\varphi}^{\bAs}$ the proximity operator of $\varphi$ with the norm $\norm{\cdot}_{\bAs}$.
\section{Preliminaries}
\label{sec:preliminaries}
In this section we present basic definitions and facts that are used in the rest of the paper. Most of them are already known, and we include them for clarity.

In the rest of the paper, we extend the definition of $\bx^k$ by setting
$\bx^k = \bx^0$ for every $k \in \{-\tau, \dots, -1\}$. Using the notation of Algorithm \ref{algoAsymain}, we also set, for any $k\in\mathbb{N}$
\begin{equation}
\label{eq:20170921b}
\left\{
\begin{aligned}
\bhat{\bx}^k &= \bx^{k - \bds^k}\\
\bbar{x}^{k+1}_i &= \prox_{\gamma_i g_i} \big( x_i^{k}
- \gamma_i \nabla_i f(\bhat{\bx}^k) \big) \text{ for all } i \in [m]\\
\bx^{k+1} &= \bx^k + \Js_{i_k}\big[\prox_{\gamma_{i_{k}} g_{i_{k}}} \big( x^{k}_{i_{k}} 
- \gamma_{i_k} \nabla_{i_{k}} f(\bhat{\bx}^k) \big) - x^k_{i_{k}} \big] \\
\bDelta^k &=  \bx^k - \bbar{\bx}^{k+1}.
\end{aligned}
\right.
\end{equation}
With this notation, we have
\begin{equation}
\label{eq:20170918a}
\bar{x}^{k+1}_{i_{k}} = \prox_{\gamma_{i_{k}} g_{i_{k}}} \big( x^{k}_{i_{k}} 
- \gamma_{i_{k}} \nabla_{i_{k}} f(\bhat{\bx}^k) \big) = x^{k+1}_{i_{k}};
\qquad \Delta^k_{i_{k}} = x^k_{i_{k}} - x^{k+1}_{i_{k}}.
\end{equation}
We remark that the random variables $\bx^k$ and $\bbar{\bx}^{k+1}$ depend on the previously selected blocks, and related delays. More precisely, we have
\begin{equation}
\label{eq:20170918b}
\begin{aligned}
\bx^k &= \bx^k(i_0,\dots, i_{k-1}, \bds^{0}, \dots, \bds^{k-1})\\
\bbar{\bx}^{k+1} &= \bbar{\bx}^{k+1}(i_0,\dots, i_{k-1}, \bds^{0}, \dots, \bds^{k}).
\end{aligned}
\end{equation}
From  \eqref{eq:20170921b} and \eqref{eq:20170918a}, we derive
\begin{equation}
\label{eq:20170921l}
\frac{x^k_{i_{k}} - x^{k+1}_{i_{k}}}{\gamma_{i_{k}}} - \nabla_{i_{k}} f (\bhat{\bx}^k) 
\in \partial g_{i_{k}}(x^{k+1}_{i_{k}})
\quad\text{and}\quad
\frac{x^k_{i} - \bbar{x}^{k+1}_{i}}{\gamma_{i}} - \nabla_{i} f (\bhat{\bx}^k) 
\in \partial g_{i}(\bbar{x}^{k+1}_{i})
\end{equation}
and therefore, for every $\bxs \in \bHH$
\begin{align}
\label{eq:20170925a}&\scalarp{\nabla_{i_{k}} f(\bhat{\bx}^k) - \frac{\Delta^k_{i_{k}}}{\gamma_{i_{k}}}, x_{i_{k}}^{k+1} - \xs_{i_{k}} }
+  g_{i_{k}}(x^{k+1}_{i_{k}}) - g_{i_{k}}(\xs_{i_{k}}) \leq 0.
\end{align}
Suppose that $\bxs$ and $\bxs^\prime$ in $\bHH$ differ  only for one component, say that of index $i$, then it follows from Assumption \ref{eq:A3} and the Descent Lemma \cite[Lemma 1.2.3]{nesterov2003introductory}, that
\begin{align}
\nonumber f(\bxx^\prime) &= f(\xs_1,\dots, \xs_{i-1}, \xs^\prime_{i}, \xs_{i+1}, \cdots, \xs_{m})\\
 \label{eq:20170921m}
&\leq f(\bxs) + \langle\nabla_{i}f(\bxs),\xs^\prime_{i} - \xs_i\rangle + \frac{L_{i}}{2} \abs{\xs^\prime_i - \xs_i}^2 \\
\label{eq:20170925d}&\leq f(\bxs) + \scalarp{\nabla f(\bxs), \bxs^\prime - \bxs}+
\frac{L_{\max}}{2} 
\norm{\bxs^\prime - \bxs}^2.
\end{align}

We finally need the following results on 
the convergence of stochastic quasi-Fej\'er sequences and monotone summable positives sequences.
\begin{fact}[\cite{combettes2015stochastic}, Proposition 2.3]\label{prop:stochfej}
Let $\mathsf{S}$ be $a$ nonempty closed subset of a real Hilbert space $\bHH$. 
Let $\mathscr{F}=\left(\mathcal{F}_{n}\right)_{n \in \mathbb{N}}$ be a sequence of 
sub-sigma algebras of $\mathcal{F}$ such that $(\forall n \in \mathbb{N})\  \mathcal{F}_{n} 
\subset \mathcal{F}_{n+1}$. We denote by $\ell_{+}(\mathscr{F})$ the set of sequences of 
$\R_+$-valued random variables $\left(\xi_{n}\right)_{n \in \mathbb{N}}$ 
such that, for every $n \in \mathbb{N}, \xi_{n}$ is $\mathcal{F}_{n}$-measurable.
We set 
\begin{equation*}
\ell_{+}^{1}(\mathscr{F})=
\bigg\{\left(\xi_{n}\right)_{n \in \mathbb{N}} 
\in \ell_{+}(\mathscr{F}) \,\bigg\vert\, \sum_{n \in \mathbb{N}} \xi_{n}<+\infty \quad \PP\text{-a.s.}\bigg\}.
\end{equation*}
Let $\left(x_{n}\right)_{n \in \mathbb{N}}$ be a
sequence of $\bHH$-valued random variables. Suppose that, for every $\mathsf{z} \in \mathsf{S}$,
there exist $\left(\chi_{n}(\mathsf{z})\right)_{n \in \mathbb{N}} \in \ell_{+}^{1}(\mathscr{X}),
\left(\vartheta_{n}(\mathsf{z})\right)_{n \in \mathrm{N}} \in \ell_{+}(\mathscr{X}),$ and
$\left(\eta_{n}(\mathsf{z})\right)_{n \in \mathbb{N}} \in \ell_{+}^{1}(\mathscr{X})$ such that the 
stochastic quasi-F\'ejer property is satisfied $\PP$-a.s.:
\begin{equation*}
(\forall n \in \mathbb{N}) \quad 
\mathsf{E} \big[\norm{x_{n+1}-\mathsf{z}}^2 \mid \mathcal{F}_{n}\big] +\vartheta_{n}(\mathsf{z}) \leqslant\left(1+\chi_{n}(\mathsf{z})\right) \left\|x_{n}-\mathsf{z}\right\|^2 + \eta_{n}(\mathsf{z}).
\end{equation*}
Then the following hold:
\begin{enumerate}[label={\rm (\roman*)}]
\item $\left(x_{n}\right)_{n \in \mathbb{N}}$ is bounded $\PP$-a.s.
\item Suppose that the set of weak cluster points of the sequence  $\left(x_{n}\right)_{n \in \mathbb{N}}$ is $\PP$-a.s.~contained in $\mathsf{S}$.
Then $\left(x_{n}\right)_{n \in \mathbb{N}}$ weakly converges $\PP$-a.s. to an $\mathsf{S}$-valued random variable.
\end{enumerate}
\end{fact}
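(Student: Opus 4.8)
The plan is to deduce both assertions from the Robbins--Siegmund almost-supermartingale convergence theorem, combined with an Opial-type argument of the kind familiar from the deterministic theory of (quasi-)Fej\'er sequences.

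First I would fix any $\mathsf{z}\in\mathsf{S}$ (possible since $\mathsf{S}\neq\emptyset$) and read the quasi-Fej\'er inequality as the hypothesis of Robbins--Siegmund with $a_n=\|x_n-\mathsf{z}\|^2$, $b_n=\chi_n(\mathsf{z})$, $c_n=\eta_n(\mathsf{z})$, $d_n=\vartheta_n(\mathsf{z})$: all four are nonnegative and $\mathcal{F}_n$-adapted, $\EE[a_{n+1}\mid\mathcal{F}_n]\le(1+b_n)a_n+c_n-d_n$, and $\sum_n b_n<+\infty$, $\sum_n c_n<+\infty$ $\PP$-a.s. The theorem then produces an event $\Omega_{\mathsf{z}}$ with $\PP(\Omega_{\mathsf{z}})=1$ on which $(\|x_n-\mathsf{z}\|^2)_n$ converges to a finite limit (and $\sum_n\vartheta_n(\mathsf{z})<+\infty$). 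Boundedness of $(\|x_n-\mathsf{z}\|^2)_n$ on $\Omega_{\mathsf{z}}$ is exactly (i).

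For (ii) the first step would be to make the conclusion of the previous paragraph uniform in $\mathsf{z}$. Since $\bHH$ is separable, $\mathsf{S}$ has a countable dense subset $D$; set $\widetilde\Omega=\bigcap_{\mathsf{z}\in D}\Omega_{\mathsf{z}}$, still of full probability, and intersect it with the full-probability events on which $(x_n)$ is bounded and on which every weak cluster point of $(x_n)$ lies in $\mathsf{S}$. On this event $\|x_n-\mathsf{z}\|$ converges for all $\mathsf{z}\in D$, and a short triangle-inequality/Cauchy argument (using boundedness of $(x_n)$) promotes this to convergence of $\|x_n-\mathsf{z}\|$ for \emph{every} $\mathsf{z}\in\mathsf{S}$. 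Next I would run the Opial argument: if $x_{n_k}\rightharpoonup x$ and $x_{m_k}\rightharpoonup y$ are two weak cluster points, then $x,y\in\mathsf{S}$, so both $\|x_n-x\|$ and $\|x_n-y\|$ converge; writing $\|x_n-x\|^2-\|x_n-y\|^2=2\langle x_n,\,y-x\rangle+\|x\|^2-\|y\|^2$ and passing to the limit along $(n_k)$ and along $(m_k)$ gives two identities whose difference is $0=2\langle x-y,\,y-x\rangle=-2\|x-y\|^2$, so $x=y$. A bounded sequence with a unique weak cluster point converges weakly, so $x_n\rightharpoonup x^*$ on this event. Finally, $x^*$ is measurable (e.g.\ $\langle x^*,e_j\rangle=\lim_n\langle x_n,e_j\rangle$ for a fixed orthonormal basis $(e_j)$ of $\bHH$, so $x^*$ is a pointwise limit of measurable maps), and $x^*\in\mathsf{S}$ $\PP$-a.s.\ because it is a weak cluster point; setting $x^*$ equal to a fixed element of $\mathsf{S}$ on the exceptional null set gives the required $\mathsf{S}$-valued random variable.

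The hard part will be the passage from ``$\|x_n-\mathsf{z}\|^2$ converges a.s.\ for each fixed $\mathsf{z}$'' to ``a.s., $\|x_n-\mathsf{z}\|^2$ converges for every $\mathsf{z}\in\mathsf{S}$'': one has to route this through a countable dense subset to keep the exceptional set null, which is exactly where separability of $\bHH$ enters. Everything else is standard --- the Robbins--Siegmund theorem (which I would simply invoke; a self-contained proof would need a supermartingale-convergence argument via stopping times) and the Opial lemma --- and the only other point requiring a little care is the Borel measurability of the weak limit $x^*$.
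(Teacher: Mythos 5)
Your argument is correct, but note that the paper itself gives no proof of this statement: it is imported verbatim as a Fact from \cite{combettes2015stochastic}, Proposition~2.3, and your reconstruction (Robbins--Siegmund applied to $\|x_n-\mathsf{z}\|^2$ for fixed $\mathsf{z}$, then a countable dense subset of $\mathsf{S}$ to get almost sure convergence of $\|x_n-\mathsf{z}\|$ simultaneously for all $\mathsf{z}\in\mathsf{S}$, followed by the Opial-type uniqueness argument and the coordinatewise measurability of the weak limit) is essentially the proof given in that reference. The only points worth flagging are minor: separability of $\bHH$ is not stated in the Fact but is guaranteed by the paper's standing assumption that the $\HH_i$ are separable, and the density step you single out as ``the hard part'' follows immediately from $\bigl|\,\|x_n-\mathsf{z}\|-\|x_n-\mathsf{z}_j\|\,\bigr|\le\|\mathsf{z}-\mathsf{z}_j\|$ without even using boundedness of $(x_n)_{n\in\mathbb{N}}$.
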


\begin{fact}[{\cite[Example~5.1.5]{durrett2019probability}}]
\label{fact:durrett}
Let $\zeta_1$ and $\zeta_2$ be independent random variables with values in 
the measurable spaces $\mathcal{Z}_1$ and $\mathcal{Z}_2$ respectively. 
Let $\varphi\colon \mathcal{Z}_1\times \mathcal{Z}_2 \to \R$ be measurable
and suppose that $\EE[\abs{\varphi(\zeta_1,\zeta_2)}]<+\infty$.
Then $\EE[\varphi(\zeta_1,\zeta_2) \,\vert\, \zeta_1] = \psi(\zeta_1)$,
where for all $z_1 \in \mathcal{Z}_1$, $\psi(z_1) = \EE[\varphi(z_1, \zeta_2)]$.
\end{fact}

\begin{fact}
\label{fact:convmonseq}
Let $(a_k)_{k \in \N} \in \R_+^{\N}$ be a
decreasing sequence of positive numbers and let $b \in \R_+ $ such that $\sum_{k \in \N} a_k \leq b<+\infty$.
Then $a_{k} = o(1/(k+1))$ and for every $k \in \N$, $a_k \leq b/(k+1)$.
\end{fact}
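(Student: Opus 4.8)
The plan is to prove the two assertions separately, both relying on the interplay between monotonicity of $(a_k)_{k \in \N}$ and summability of the series.

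\textbf{The explicit bound $a_k \le b/(k+1)$.} Fix $k \in \N$. Since the sequence is decreasing, $a_k \le a_j$ for every $j \in \{0, \dots, k\}$, so I would simply sum these $k+1$ inequalities:
\[
(k+1)\, a_k = \sum_{j=0}^{k} a_k \le \sum_{j=0}^{k} a_j \le \sum_{j \in \N} a_j \le b,
\]
and divide by $k+1$.

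\textbf{The rate $a_k = o(1/(k+1))$.} Here I would introduce the tails $r_n := \sum_{j \ge n} a_j$, which are finite and satisfy $r_n \to 0$ as $n \to \infty$ because the series converges. The key observation is that, for any $n \le k$, monotonicity gives $(k - n + 1)\, a_k \le \sum_{j=n}^{k} a_j \le r_n$. Then, given $\varepsilon > 0$, I would first fix $N$ with $r_N < \varepsilon/2$ and then let $k$ range over $k \ge 2N$, for which $k - N + 1 \ge (k+1)/2$; combining these,
\[
(k+1)\, a_k \le \frac{k+1}{\,k - N + 1\,}\, r_N \le 2 r_N < \varepsilon .
\]
Since $\varepsilon$ is arbitrary, this shows $(k+1) a_k \to 0$, i.e.\ $a_k = o(1/(k+1))$.

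There is no real obstacle in this argument; the only point that requires a little care is the tail estimate in the second part, where the index range must be split so that the number of summed terms $k - n + 1$ stays comparable to $k+1$ while the remaining tail $r_n$ is already small. Fixing $n = N$ first and only afterwards pushing $k$ past $2N$ is exactly what makes both requirements compatible.
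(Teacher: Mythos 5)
Your proof is correct: both the telescoping bound $(k+1)a_k \le \sum_{j=0}^k a_j \le b$ and the tail argument with $N$ fixed before letting $k \ge 2N$ are sound, and the inequality $k - N + 1 \ge (k+1)/2$ indeed holds in that range. The paper states this as a Fact without supplying a proof, so there is nothing to compare against; your argument is the standard one and fills the gap completely.
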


\begin{fact}\label{fact:decomp}
Let $(a_k)_{k \in \N} \in \R_+^{\N}$ be a sequence of positive numbers. 
$(\forall\, n, k \in \mathbb{Z}, k \geq n)$, 
$$\sum_{h=n}^{k-1} a_h = \sum_{h=n}^{k-1} (h- n + 1) a_h - \sum_{h=n+1}^k (h - n) a_h  + (k-n) a_k.$$
\end{fact}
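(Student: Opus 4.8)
This is a purely algebraic identity (a telescoping / summation-by-parts manipulation), valid for an arbitrary sequence $(a_h)_{h \in \mathbb{Z}}$ in a real vector space, so no analytic input is needed. The plan is to rewrite each summand as a difference of two ``weighted'' terms and then telescope. First I would use the trivial identity $(h - n + 1) - (h - n) = 1$, valid for every integer $h$, to write $a_h = (h - n + 1)\, a_h - (h - n)\, a_h$. Summing this over $h$ from $n$ to $k - 1$ gives
\[
\sum_{h=n}^{k-1} a_h \;=\; \sum_{h=n}^{k-1} (h - n + 1)\, a_h \;-\; \sum_{h=n}^{k-1} (h - n)\, a_h .
\]

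Next I would adjust the index range of the second sum. Its $h = n$ term vanishes, since the coefficient $h - n$ is $0$ there, so the summation may be started at $h = n + 1$; and by adding and subtracting the single term $(k - n)\, a_k$ the upper limit can be extended from $k - 1$ to $k$:
\[
\sum_{h=n}^{k-1} (h - n)\, a_h \;=\; \sum_{h=n+1}^{k} (h - n)\, a_h \;-\; (k - n)\, a_k .
\]
Substituting this back into the previous display yields exactly the asserted identity. The degenerate case $k = n$ is handled by the same computation, both sides reducing to $0$.

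There is no real obstacle here beyond keeping track of the two boundary indices $h = n$ and $h = k$; the content of the statement is precisely this bookkeeping, not any inequality or convergence argument. Equivalently, one could present it as the instance of Abel's summation formula obtained with the weight sequence $b_h = h - n + 1$, but the one-line telescoping above is the most economical route and is what I would write.
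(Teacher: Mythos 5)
Your proof is correct: the decomposition $a_h = (h-n+1)a_h - (h-n)a_h$ followed by the two boundary adjustments (dropping the vanishing $h=n$ term and extending the upper limit to $k$ at the cost of $(k-n)a_k$) verifies the identity, including the degenerate case $k=n$. The paper states Fact~\ref{fact:decomp} without proof, and your telescoping bookkeeping is precisely the intended elementary argument.
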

\subsection{Auxiliary lemmas}
Here we collect technical lemmas needed for our analysis, using the notation given in \eqref{eq:20170921b}. For reader's convenience, we provide all the proofs in Appendix~\ref{app:proof}.  

The following result appears in \cite[page 357]{liu2015asynchronous}.
\begin{lemma}
\label{p:20170918c}  Let $(\bx_k)_{k \in \N}$ be the sequence generated by Algorithm \ref{algoAsymain}.
We have
\begin{equation}
\label{eq:20170918d}
(\forall\, k \in \N)\quad  \bx^k = \bhat{\bx}^k - \sum_{h \in J(k)} (\bx^h - \bx^{h+1}),
\end{equation}
where $J(k) \subset \{k-\tau, \dots, k-1\}$
is a random set.
\end{lemma}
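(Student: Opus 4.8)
The plan is to unwind the recursion defining $\bhat{\bx}^k$ and $\bx^k$ by decomposing the total delay at iteration $k$ into the successive single-coordinate updates that happened in between. Recall $\bhat{\bx}^k = \bx^{k - \bds^k}$, so componentwise $\hat{x}^k_i = x_i^{k - \ds^k_i}$, and each $\ds^k_i \le \min\{k,\tau\}$. The key observation is that the iterates $(\bx^h)_{h}$ form a trajectory in which consecutive iterates differ in at most one block: from \eqref{eq:20170921b}, $\bx^{h+1} - \bx^h = \Js_{i_h}[\cdots]$, which is supported on block $i_h$ only. Hence for any $h' \le h$ we have the telescoping identity $\bx^{h} = \bx^{h'} + \sum_{\ell = h'}^{h-1}(\bx^{\ell+1} - \bx^\ell)$, and in each coordinate $i$, the sum only picks up those indices $\ell$ with $i_\ell = i$.

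Concretely, first I would fix $k$ and work coordinate by coordinate. For coordinate $i$, we want to express $\hat{x}^k_i = x_i^{k-\ds^k_i}$ in terms of $x_i^k$. Telescoping from $k - \ds^k_i$ up to $k$ gives
\begin{equation*}
x_i^{k-\ds^k_i} = x_i^k - \sum_{h = k - \ds^k_i}^{k-1} (x^{h+1}_i - x^h_i) = x_i^k - \sum_{h = k - \ds^k_i}^{k-1} (x^{h+1}_i - x^h_i).
\end{equation*}
Because $x^{h+1}_i - x^h_i$ is nonzero only when $i_h = i$, only those $h$ in the window $\{k-\ds^k_i, \dots, k-1\}$ with $i_h = i$ contribute. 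Now assemble the coordinates: define
\begin{equation*}
J(k) := \{\, h \in \{k-\tau, \dots, k-1\} : i_h \in \{ j \in [m] : k - \ds^k_j \le h \} \,\},
\end{equation*}
i.e. $h \in J(k)$ precisely when the block $i_h$ updated at step $h$ is one whose delay window at step $k$ reaches back to $h$. Then for each coordinate $i$, the set of $h \in J(k)$ with $i_h = i$ is exactly $\{h : k - \ds^k_i \le h \le k-1,\ i_h = i\}$, so $\sum_{h \in J(k)}(\bx^h - \bx^{h+1})$ has $i$-th component equal to $\sum_{h = k-\ds^k_i}^{k-1}(x^h_i - x^{h+1}_i) = x^k_i - \hat{x}^k_i$, which is \eqref{eq:20170918d}. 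Since each $\ds^k_j \le \min\{k,\tau\}$, indeed $J(k) \subset \{k-\tau, \dots, k-1\}$ (and also $J(k) \subset \{0, 1, \dots, k-1\}$ using $\ds^k_j \le k$, consistent with the convention $\bx^h = \bx^0$ for $h < 0$). The randomness of $J(k)$ enters only through the $i_h$'s, while the delay windows $\bds^k$ are deterministic parameters, as stressed in the paper.

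I expect the only real subtlety — not an obstacle so much as a bookkeeping point to state carefully — is the passage from the per-coordinate telescoping sums, whose index windows have coordinate-dependent lengths $\ds^k_i$, to a single index set $J(k)$ that works simultaneously for all coordinates. The trick is that a given step $h$ contributes to at most one coordinate (namely $i_h$), so there is no conflict: $J(k)$ just collects, for each $i$, the steps in $i$'s own window that actually touched block $i$. One should also note the edge cases $\ds^k_i = 0$ (empty contribution from coordinate $i$) and small $k$ (windows truncated at $0$, harmless given the extension $\bx^h=\bx^0$). Everything else is a routine telescoping argument, and no convexity or probabilistic input is needed beyond the structural form of the update in \eqref{eq:20170921b}.
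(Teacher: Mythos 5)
Your proposal is correct and follows essentially the same route as the paper: telescope each coordinate over its own delay window, use the fact that step $h$ changes only block $i_h$, and collect the contributing indices into the set $J(k)=\{h\in\{k-\tau,\dots,k-1\} : h\ge k-\ds^k_{i_h}\}$, which is exactly the paper's definition. The only difference is presentational — you assemble coordinate by coordinate where the paper swaps the order of summation using indicator coefficients $\delta_{h,i}$ — and the bookkeeping point you flag is precisely the one the paper handles by observing that at most one summand per $h$ is nonzero.
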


The next lemma bounds the difference between the delayed and the current gradient in terms of the steps along the block coordinates, see \cite[equation A.7]{liu2015asynchronous}.
\begin{lemma}
\label{p:20170921n}
Let $(\bx_k)_{k \in \N}$ be the sequence generated by Algorithm \ref{algoAsymain}. It follows
\begin{equation*}
(\forall\, k \in \N)\quad\norm{\nabla f(\bx^k) - \nabla f(\bhat{\bx}^{k})} \leq L_{\mathrm{res}}
\sum_{h \in J(k)} \norm{\bx^{h+1} - \bx^{h}}.
\end{equation*}
\end{lemma}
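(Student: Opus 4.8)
The plan is to derive everything from Lemma~\ref{p:20170918c} together with the ``residual'' Lipschitz estimate contained in Assumption~\ref{eq:A3}. Rewriting the identity of Lemma~\ref{p:20170918c} as
\begin{equation*}
\bx^k = \bhat{\bx}^k + \sum_{h \in J(k)} (\bx^{h+1} - \bx^h),
\end{equation*}
we see that $\bx^k$ and $\bhat{\bx}^k$ differ exactly by the sum of the increments $\bx^{h+1} - \bx^h$, $h \in J(k)$. The key structural observation is that, by the update rule in \eqref{eq:20170921b}, each such increment satisfies $\bx^{h+1} - \bx^h = \Js_{i_h}\big[\prox_{\gamma_{i_h} g_{i_h}}(x^h_{i_h} - \gamma_{i_h}\nabla_{i_h} f(\bhat{\bx}^h)) - x^h_{i_h}\big]$, hence is supported on the single block $i_h$; this is what will let us apply the per-block Lipschitz constant $L_{\mathrm{res}}$.

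Next I would introduce a telescoping chain between $\bhat{\bx}^k$ and $\bx^k$ in which each step modifies only one block. Enumerate $J(k) = \{h_1 < \dots < h_s\}$ (this is a random index set, but for the present pointwise estimate it is fixed by a realization of $(i_0,\dots,i_{k-1})$ and $(\bds^0,\dots,\bds^{k-1})$, so no probabilistic argument is needed), set $\by^0 := \bhat{\bx}^k$ and $\by^j := \by^{j-1} + (\bx^{h_j+1} - \bx^{h_j})$ for $j = 1, \dots, s$. Then $\by^s = \bx^k$ by the displayed identity, and by the observation above $\by^j$ coincides with $\by^{j-1}$ in every block except possibly block $i_{h_j}$, with $\norm{\by^j - \by^{j-1}} = \norm{\bx^{h_j+1} - \bx^{h_j}}$.

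Now I would invoke Assumption~\ref{eq:A3}: since $\by^{j-1}$ and $\by^j$ differ only in the $i_{h_j}$-th coordinate, Lipschitz continuity of $\nabla f(x_1,\dots,\cdot,\dots,x_m)$ with constant $L_{\mathrm{res}}$ gives $\norm{\nabla f(\by^j) - \nabla f(\by^{j-1})} \le L_{\mathrm{res}}\norm{\by^j - \by^{j-1}} = L_{\mathrm{res}}\norm{\bx^{h_j+1} - \bx^{h_j}}$. Summing over $j$ and using the triangle inequality,
\begin{equation*}
\norm{\nabla f(\bx^k) - \nabla f(\bhat{\bx}^k)} \le \sum_{j=1}^{s} \norm{\nabla f(\by^j) - \nabla f(\by^{j-1})} \le L_{\mathrm{res}} \sum_{j=1}^{s} \norm{\bx^{h_j+1} - \bx^{h_j}} = L_{\mathrm{res}} \sum_{h \in J(k)} \norm{\bx^{h+1} - \bx^h},
\end{equation*}
which is the assertion. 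The only subtlety — and the closest thing to an obstacle — is to set the telescoping up so that consecutive terms differ in exactly one block, which is precisely why one inserts the increments one at a time instead of trying to bound $\norm{\nabla f(\bx^k) - \nabla f(\bhat{\bx}^k)}$ in a single step; the possibility that the same block occurs for several indices $h_j$ causes no difficulty, since the estimate from Assumption~\ref{eq:A3} is applied step by step along the chain.
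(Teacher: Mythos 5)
Your proof is correct and is essentially identical to the paper's own argument in Appendix~\ref{AppendixA}: the paper also orders $J(k)$ increasingly, forms the intermediate points $\bhat{\bx}^{k,t} = \bhat{\bx}^{k} + \sum_{j=1}^{t}(\bx^{h_j+1}-\bx^{h_j})$ (your $\by^j$), observes that consecutive points differ in a single block, and telescopes using the $L_{\mathrm{res}}$-Lipschitz estimate from Assumption~\ref{eq:A3} together with the triangle inequality. No gaps.
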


\begin{remark} 
\label{rmk:20210618a}
Since $\|\cdot\|^2_{\bVV} \leq \pp_{\max}\|\cdot\|^2$ and $\|\cdot\|^2 \leq \pp_{\min}^{-1}\|\cdot\|^2_{\bVV}$, Lemma~\ref{p:20170921n} yields
\begin{align*}
\norm{\nabla f (\bx^k) - \nabla f(\bhat{\bx}^{k})}_{\bVV}
&\leq \sqrt{\pp_{\max}} \norm{\nabla f (\bx^k) - \nabla f(\bhat{\bx}^{k})}\\[1ex]
&\leq L_{\mathrm{res}} \sqrt{\pp_{\max}}
\sum_{h \in J(k)} \norm{\bx^{h + 1} - \bx^{h}}\\
&\leq L_{\mathrm{res}} \frac{\sqrt{\pp_{\max}}}{\sqrt{\pp_{\min}}}
\sum_{h \in J(k)} \norm{\bx^{h + 1} - \bx^{h}}_{\bVV}.
\end{align*}
We set $\displaystyle L_{\mathrm{res}}^{\bVV} = L_{\mathrm{res}} \frac{\sqrt{\pp_{\max}}}{\sqrt{\pp_{\min}}}$.
\end{remark}


The result below yields a kind of inexact convexity inequality due to the presence of the delayed gradient vector. It is our variant of \cite[Equation A.20]{liu2015asynchronous}.
\begin{lemma}\label{eq:20210308a} Let $(\bx_k)_{k \in \N}$ be a sequence generated by 
Algorithm \ref{algoAsymain}. Then, for every $k \in \N$,
\begin{equation*}
(\forall\, \bxs \in \bHH)\quad \scalarp{\nabla f(\bhat{\bx}^k), \bxs - \bx^k} \leq f(\bxs) - f(\bx^k) 
+ \frac{\tau L_\mathrm{res}}{2 } \sum_{h \in J(k)} \norm{\bx^{h} - \bx^{h+1}}^2.
\end{equation*} 
\end{lemma}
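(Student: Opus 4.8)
The plan is to combine the exact convexity inequality for $f$ at the true point $\bx^k$ with a controlled estimate for the error incurred by replacing $\nabla f(\bx^k)$ by the delayed gradient $\nabla f(\bhat{\bx}^k)$. First I would write, for fixed $k$ and arbitrary $\bxs \in \bHH$,
\begin{equation*}
\scalarp{\nabla f(\bhat{\bx}^k), \bxs - \bx^k} = \scalarp{\nabla f(\bx^k), \bxs - \bx^k} + \scalarp{\nabla f(\bhat{\bx}^k) - \nabla f(\bx^k), \bxs - \bx^k}.
\end{equation*}
By convexity of $f$ (Assumption~\ref{eq:A1}), the first term is bounded above by $f(\bxs) - f(\bx^k)$. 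It remains to bound the second term by $\frac{\tau L_\mathrm{res}}{2}\sum_{h \in J(k)} \norm{\bx^h - \bx^{h+1}}^2$.

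For the second term, the natural idea is to use Cauchy--Schwarz together with Lemma~\ref{p:20170921n}. However, a direct application produces $\norm{\bxs - \bx^k}$, which is not what we want; the trick is to use the representation of Lemma~\ref{p:20170918c}, namely $\bx^k = \bhat{\bx}^k - \sum_{h \in J(k)}(\bx^h - \bx^{h+1})$, to rewrite $\bxs - \bx^k = (\bxs - \bhat{\bx}^k) + \sum_{h \in J(k)}(\bx^h - \bx^{h+1})$, and then to replace $\bxs - \bhat{\bx}^k$ again by a telescoping-type argument. Actually the cleaner route is: since $\nabla f(\bhat{\bx}^k) - \nabla f(\bx^k)$ pairs against $\bxs - \bx^k$, and we want a bound free of $\bxs$, I expect one must instead bound $\scalarp{\nabla f(\bhat{\bx}^k) - \nabla f(\bx^k), \bxs - \bhat{\bx}^k}$ using convexity-type monotonicity and handle the remaining piece $\scalarp{\nabla f(\bhat{\bx}^k) - \nabla f(\bx^k), \bhat{\bx}^k - \bx^k} = \scalarp{\nabla f(\bhat{\bx}^k) - \nabla f(\bx^k), \sum_{h \in J(k)}(\bx^h - \bx^{h+1})}$ directly by Cauchy--Schwarz and Lemma~\ref{p:20170921n}. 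Since $f$ is convex, $\scalarp{\nabla f(\bhat{\bx}^k) - \nabla f(\bx^k), \bx^k - \bhat{\bx}^k} \geq 0$, hence $\scalarp{\nabla f(\bhat{\bx}^k), \bx^k - \bhat{\bx}^k} \geq \scalarp{\nabla f(\bx^k), \bx^k - \bhat{\bx}^k}$; combined with $\scalarp{\nabla f(\bx^k), \bxs - \bhat{\bx}^k} \le f(\bxs) - f(\bhat{\bx}^k) + \scalarp{\nabla f(\bx^k),\bx^k - \bhat\bx^k}$ — one should be able to assemble the estimate so that only the term $\scalarp{\nabla f(\bhat{\bx}^k) - \nabla f(\bx^k), \bhat{\bx}^k - \bx^k}$ survives, which is then estimated as
\begin{equation*}
\Big| \scalarp{\nabla f(\bhat{\bx}^k) - \nabla f(\bx^k), \textstyle\sum_{h \in J(k)}(\bx^h - \bx^{h+1})} \Big| \le L_\mathrm{res} \Big(\sum_{h\in J(k)}\norm{\bx^{h+1}-\bx^h}\Big)\Big(\sum_{h\in J(k)}\norm{\bx^h - \bx^{h+1}}\Big),
\end{equation*}
and finally, since $|J(k)| \le \tau$, the Cauchy--Schwarz inequality $\big(\sum_{h\in J(k)}\norm{\bx^h-\bx^{h+1}}\big)^2 \le \tau \sum_{h\in J(k)}\norm{\bx^h-\bx^{h+1}}^2$ gives the factor $\tau L_\mathrm{res}$, and the factor $\tfrac12$ should come out of an arithmetic–geometric step or from splitting the square symmetrically.

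The step I expect to be the main obstacle is organizing the bookkeeping so that the stray terms involving $\bxs$ and $\bhat{\bx}^k$ all cancel and one is genuinely left only with an expression in the increments $\norm{\bx^h - \bx^{h+1}}$, while simultaneously tracking the constant $\tfrac{\tau L_\mathrm{res}}{2}$ rather than a larger one such as $\tau L_\mathrm{res}$. In particular the appearance of $\tfrac12$ suggests that the right decomposition is to write the mixed inner product as half of a difference of squared norms (a polarization/Young step of the form $\scalarp{a,b} \le \tfrac12\norm{a}^2 + \tfrac12\norm{b}^2$ applied after normalizing by the Lipschitz constant), so choosing that split correctly is the delicate point; everything else is Cauchy--Schwarz, convexity of $f$, and Lemmas~\ref{p:20170918c} and~\ref{p:20170921n}.
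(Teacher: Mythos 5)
There is a genuine gap, in two places. First, your bookkeeping does not close. After peeling off $\scalarp{\nabla f(\bx^k), \bxs-\bx^k}\le f(\bxs)-f(\bx^k)$ you are left with $\scalarp{\nabla f(\bhat{\bx}^k)-\nabla f(\bx^k),\, \bxs-\bx^k}$, and your further split still contains the piece $\scalarp{\nabla f(\bhat{\bx}^k)-\nabla f(\bx^k),\, \bxs-\bhat{\bx}^k}$, which pairs the arbitrary point $\bxs$ with a gradient difference taken at two \emph{other} points; monotonicity of $\nabla f$ says nothing about such a three-point expression, so this term cannot be discarded. (Moreover, the monotonicity inequality you invoke has the wrong sign: convexity gives $\scalarp{\nabla f(\bhat{\bx}^k)-\nabla f(\bx^k),\bhat{\bx}^k-\bx^k}\ge 0$, hence $\scalarp{\nabla f(\bhat{\bx}^k)-\nabla f(\bx^k),\bx^k-\bhat{\bx}^k}\le 0$, the opposite of what you wrote, and in any case it points the unhelpful way, since you need an \emph{upper} bound on $\scalarp{\nabla f(\bhat{\bx}^k),\bhat{\bx}^k-\bx^k}$.) The decomposition that does close is to apply convexity at $\bhat{\bx}^k$ for the $\bxs$-part, $\scalarp{\nabla f(\bhat{\bx}^k),\bxs-\bhat{\bx}^k}\le f(\bxs)-f(\bhat{\bx}^k)$, and convexity at $\bx^k$ for $\scalarp{\nabla f(\bx^k),\bhat{\bx}^k-\bx^k}\le f(\bhat{\bx}^k)-f(\bx^k)$, leaving only $\scalarp{\nabla f(\bhat{\bx}^k)-\nabla f(\bx^k),\bhat{\bx}^k-\bx^k}$ to estimate.

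Second, and more seriously, estimating that last term by Cauchy--Schwarz together with Lemmas~\ref{p:20170918c} and~\ref{p:20170921n} gives $L_{\mathrm{res}}\big(\sum_{h\in J(k)}\norm{\bx^h-\bx^{h+1}}\big)^2\le\tau L_{\mathrm{res}}\sum_{h\in J(k)}\norm{\bx^h-\bx^{h+1}}^2$, which is twice the constant in the statement. The factor $\tfrac12$ does not come out of a Young/AM--GM step: applied to this inner product, Young's inequality reproduces exactly the same constant $\tau L_{\mathrm{res}}$. In the paper's proof the $\tfrac12$ comes from decomposing the path from $\bhat{\bx}^k$ to $\bx^k$ into single-block moves $\bhat{\bx}^{k,t}$, bounding each diagonal term $\scalarp{\nabla f(\bhat{\bx}^{k,t}),\bhat{\bx}^{k,t}-\bhat{\bx}^{k,t+1}}$ by the blockwise descent lemma \eqref{eq:20170921m} (which carries its own $\tfrac12$ and the constant $L_{\max}\le L_{\mathrm{res}}$), and estimating only the strictly lower-triangular cross terms $s<t$ via the Lipschitz property; the identity $\sum_t\sum_{s<t}a_sa_t=\tfrac12\big[\big(\sum_t a_t\big)^2-\sum_t a_t^2\big]$ then yields the $\tfrac12$, with the $-\tfrac{L_{\mathrm{res}}}{2}\sum_t a_t^2$ term absorbing the descent-lemma remainders. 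Your sketch, once correctly assembled, proves the lemma only with $\tau L_{\mathrm{res}}$ in place of $\tfrac{\tau L_{\mathrm{res}}}{2}$ (enough for the qualitative results downstream, at the price of worse constants), but it does not establish the stated inequality.
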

The result below generalizes to the asynchronous case
Lemma 4.3 in \cite{salzo2021parallel}.
\begin{lemma}
\label{lem:20200126a}
 Let $\bHH$ be $a$ real Hilbert space. Let $\varphi\colon\bHH \to \mathbb{R}$ be differentiable and convex, and $\left.\left.\psi\colon \bHH \to\right]-\infty,+\infty\right]$ be proper, lower semicontinuous and convex. 
 Let $\bxx, \hat{\bxx} \in \bHH$ and set $\bxx^{+}=\operatorname{prox}_{\psi}(\bxx-\nabla \varphi(\hat{\bxx})) .$ Then, for every $\bzz \in \bHH$,
\begin{align*}
\left\langle\bxx-\bxx^{+}, \bzz-\bxx\right\rangle 
&\leq \psi(\bzz)-\psi(\bxx)+\langle\nabla \varphi(\hat{\bxx}), \bzz-\bxx\rangle \\[1ex]
&\quad +\psi(\bxx)-\psi\left(\bxx^{+}\right) +\left\langle\nabla \varphi(\hat{\bxx}), \bxx-\bxx^{+}\right\rangle-\lVert \bxx-\bxx^{+}\rVert^{2}.
\end{align*}
\end{lemma}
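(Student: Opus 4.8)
The plan is to unfold the variational characterization of the proximal point, apply the subgradient inequality for $\psi$, and then use a single Pythagoras-type inner-product identity to rearrange the terms into the stated form. It is worth noting in advance that convexity (indeed, any property) of $\varphi$ beyond differentiability is not needed: the conclusion holds verbatim with $\nabla\varphi(\hat{\bxx})$ replaced by an arbitrary vector.

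\emph{Step 1 (subgradient inequality).} By definition, $\bxx^{+}=\prox_{\psi}\big(\bxx-\nabla\varphi(\hat{\bxx})\big)$ is equivalent to $\bxx-\nabla\varphi(\hat{\bxx})-\bxx^{+}\in\partial\psi(\bxx^{+})$, whence, for every $\bzz\in\bHH$,
\[
\big\langle \bxx-\nabla\varphi(\hat{\bxx})-\bxx^{+},\ \bzz-\bxx^{+}\big\rangle \ \le\ \psi(\bzz)-\psi(\bxx^{+}).
\]

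\emph{Step 2 (insert $\bxx^{+}$ and split off the gradient).} Write the left-hand side of the claim by inserting $\bxx^{+}$,
\[
\big\langle \bxx-\bxx^{+},\ \bzz-\bxx\big\rangle \ =\ \big\langle \bxx-\bxx^{+},\ \bzz-\bxx^{+}\big\rangle-\norm{\bxx-\bxx^{+}}^{2},
\]
and decompose the first term on the right as
\[
\big\langle \bxx-\bxx^{+},\ \bzz-\bxx^{+}\big\rangle \ =\ \big\langle \bxx-\nabla\varphi(\hat{\bxx})-\bxx^{+},\ \bzz-\bxx^{+}\big\rangle+\big\langle \nabla\varphi(\hat{\bxx}),\ \bzz-\bxx^{+}\big\rangle.
\]

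\emph{Step 3 (assemble).} Bounding the first summand using Step 1 and expanding $\bzz-\bxx^{+}=(\bzz-\bxx)+(\bxx-\bxx^{+})$ in the gradient term gives
\[
\big\langle \bxx-\bxx^{+},\ \bzz-\bxx\big\rangle \ \le\ \psi(\bzz)-\psi(\bxx^{+})+\big\langle \nabla\varphi(\hat{\bxx}),\ \bzz-\bxx\big\rangle+\big\langle \nabla\varphi(\hat{\bxx}),\ \bxx-\bxx^{+}\big\rangle-\norm{\bxx-\bxx^{+}}^{2}.
\]
Finally, using the trivial identity $\psi(\bzz)-\psi(\bxx^{+})=\big(\psi(\bzz)-\psi(\bxx)\big)+\big(\psi(\bxx)-\psi(\bxx^{+})\big)$ regroups the right-hand side into exactly the claimed expression.

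\emph{Main obstacle.} There is no substantive difficulty here; the statement is essentially a bookkeeping identity around the prox inequality. The only points requiring care are the correct signs in the Pythagoras-type expansion of Step 2 and matching the grouping of the $\psi$-terms (and the $\nabla\varphi(\hat{\bxx})$-terms) to the precise right-hand side displayed in the lemma.
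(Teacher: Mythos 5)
Your proof is correct and follows essentially the same route as the paper's: both start from the characterization $\bxx-\nabla\varphi(\hat{\bxx})-\bxx^{+}\in\partial\psi(\bxx^{+})$, apply the subgradient inequality at $\bxx^{+}$, and rearrange via the identity $\langle \bxx-\bxx^{+},\bzz-\bxx\rangle=\langle \bxx-\bxx^{+},\bzz-\bxx^{+}\rangle-\lVert\bxx-\bxx^{+}\rVert^{2}$ together with the splitting of the gradient term. Your side remark that convexity of $\varphi$ is never used is accurate but does not change the argument.
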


\section{Convergence analysis}
\label{sec:convergence}
In this section we assume just convexity of the objective function and we provide worst case convergence rate as well as almost sure weak convergence of the iterates. 

Throughout the section we set
\begin{equation}
\label{eq:20210618b}
\delta  = \max_{i \in [m]} \left( L_{i}\gamma_i + 2\gamma_i\tau L_{\mathrm{res}}^{\bVV} \sqrt{\pp_{\max}}\right) = \max_{i \in [m]} \left( L_{i}\gamma_i + 2\gamma_i\tau L_{\mathrm{res}} \frac{\pp_{\max}}{\sqrt{\pp_{\min}}}\right),
\end{equation}
where the constants $L_i$'s and $L_{\mathrm{res}}$ are defined in Assumption~\ref{eq:A3} and the constant $L_{\mathrm{res}}^{\bVV}$ is defined in Remark~\ref{rmk:20210618a}.
The main convergence theorem is as follows.
\begin{theorem}
\label{thm:main1}
 Let $(\bx^k)_{k \in \mathbb{N}}$ be the sequence generated by Algorithm \ref{algoAsymain} and suppose that $\delta < 2$. Then the following hold.
\begin{enumerate}[label={\rm (\roman*)}]
\item\label{thm:main1_i} 
The sequence $(\bx^k)_{k\in \mathbb{N}}$ 
weakly converges $\PP$-a.s.~to a random variable that takes values in $\argmin F$.
\item\label{thm:main1_ii} 
$\EE[F(\bx^k)] - F^* = o(1/k)$. Furthermore, for every integer $k \geq 1$,
\begin{align*}
\EE[&F(\bx^k)] - F^* \leq \frac{1}{k} \left(\frac{\mathrm{dist}^2_{\bWW}
(\bx^0, \argmin F)}{2} + C\left(F(\bx^0) - F^*\right)\right),
\end{align*}
where $\displaystyle C = \frac{\max\left\{1,(2-\delta)^{-1}\right\}}{\pp_{\min}} -1
+ \tau\frac{1}{\sqrt{\pp_{\min}}(2-\delta)}
\left( 1 + \frac{\pp_{\max}}{\sqrt{\pp_{\min}}}\right)
$.
\end{enumerate}
\end{theorem}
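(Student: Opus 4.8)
The plan is to derive, for each $\bxs\in\argmin F$, a one‑step stochastic quasi‑Fej\'er inequality relative to the norm $\norm{\cdot}_{\bWW}$, to sum it while absorbing the delay terms by an Abel‑type rearrangement, and then to invoke Fact~\ref{prop:stochfej} for the almost sure weak convergence and Fact~\ref{fact:convmonseq} for the rate in part~(ii).

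\emph{Step 1: one‑step inequality.} Let $\mathcal F_k=\sigma(i_0,\dots,i_{k-1})$. Since the delay vectors are deterministic, $\bx^k$, $\bhat{\bx}^k$, $\bbar{\bx}^{k+1}$ and $J(k)$ are $\mathcal F_k$‑measurable, while $i_k$ is independent of $\mathcal F_k$ with $\PP(i_k=i)=\pp_i$. Because only the $i_k$‑th block is updated,
\[
\EE\big[\norm{\bx^{k+1}-\bxs}_{\bWW}^2\bigm|\mathcal F_k\big]
=\norm{\bx^k-\bxs}_{\bWW}^2+\norm{\bbar{\bx}^{k+1}-\bxs}_{\bGammas^{-1}}^2-\norm{\bx^k-\bxs}_{\bGammas^{-1}}^2 .
\]
Writing $\bbar{\bx}^{k+1}=\bx^k-\bDelta^k$, bounding the resulting cross term by the block subgradient inequalities \eqref{eq:20170921l}, using convexity of $f$ at $\bx^k$ in the form $\langle\nabla f(\bx^k),\bDelta^k\rangle\le f(\bx^k)-f(\bbar{\bx}^{k+1})$, the inexact convexity Lemma~\ref{eq:20210308a} for $\langle\nabla f(\bhat{\bx}^k),\bxs-\bx^k\rangle$, and then splitting $\nabla f(\bhat{\bx}^k)=\nabla f(\bx^k)+(\nabla f(\bhat{\bx}^k)-\nabla f(\bx^k))$ with the residual estimated through Lemma~\ref{p:20170921n}/Remark~\ref{rmk:20210618a} and a Young inequality whose split is tuned to the stepsizes, the $f(\bx^k)$‑terms cancel and one reaches
\[
\EE\big[\norm{\bx^{k+1}-\bxs}_{\bWW}^2\bigm|\mathcal F_k\big]+(2-\delta)\norm{\bDelta^k}_{\bGammas^{-1}}^2+2\big(F(\bbar{\bx}^{k+1})-F^*\big)
\le\norm{\bx^k-\bxs}_{\bWW}^2+\mu\!\!\sum_{h\in J(k)}\!\!\norm{\bx^{h+1}-\bx^h}^2
\]
for a constant $\mu>0$; a parallel computation based on the one‑block descent inequality \eqref{eq:20170921m} and \eqref{eq:20170921l} gives the companion estimate $\EE[F(\bx^{k+1})\mid\mathcal F_k]\le F(\bx^k)+\mu'\sum_{h\in J(k)}\norm{\bx^{h+1}-\bx^h}^2$, where again the stepsize rule forces the coefficient of $\norm{\bDelta^k}^2$ to be nonpositive.

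\emph{Step 2: summation.} Each index $h$ lies in at most $\tau$ of the sets $J(k)\subset\{k-\tau,\dots,k-1\}$, and $\EE[\norm{\bx^{h+1}-\bx^h}^2\mid\mathcal F_h]=\norm{\bDelta^h}_{\bVV}^2$ is comparable to $\norm{\bDelta^h}_{\bGammas^{-1}}^2$. Rearranging $\sum_k\sum_{h\in J(k)}\norm{\bx^{h+1}-\bx^h}^2$ by the Abel‑type identity of Fact~\ref{fact:decomp} lets one cancel these delay terms against a part of $\sum_k(2-\delta)\norm{\bDelta^k}_{\bGammas^{-1}}^2$, leaving a strictly positive residual coefficient exactly under the hypothesis $\delta<2$; the finitely many boundary contributions of the rearrangement, handled with the companion descent estimate and the fact that there is no delay before the run starts ($\bx^h\equiv\bx^0$ for $h<0$), are absorbed into a multiple of $F(\bx^0)-F^*$. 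Taking total expectations and summing over $k=0,\dots,N-1$ gives, for every $N$,
\[
\EE\big[\norm{\bx^N-\bxs}_{\bWW}^2\big]+\sum_{k=0}^{N-1}\EE\big[\vartheta_k(\bxs)\big]\le\norm{\bx^0-\bxs}_{\bWW}^2+C_{\bxs}\big(F(\bx^0)-F^*\big),
\]
with $\vartheta_k(\bxs)\ge 0$ dominating $\norm{\bDelta^k}_{\bGammas^{-1}}^2$ and $F(\bbar{\bx}^{k+1})-F^*$ up to positive factors. Hence $\sum_k\EE\norm{\bDelta^k}^2<\infty$ and $\sum_k\EE[F(\bbar{\bx}^{k+1})-F^*]<\infty$, and the one‑step inequality has the quasi‑Fej\'er form of Fact~\ref{prop:stochfej} with $\chi_k\equiv 0$ and $\eta_k=\mu\sum_{h\in J(k)}\norm{\bx^{h+1}-\bx^h}^2$, the latter being a.s.\ summable since $\sum_k\eta_k\le\mu\tau\sum_h\norm{\bDelta^h}^2<\infty$ $\PP$‑a.s.

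\emph{Step 3: almost sure weak convergence (part (i)).} From $\sum_k\norm{\bDelta^k}^2<\infty$ a.s.\ we get $\bDelta^k\to 0$ and, by Lemma~\ref{p:20170918c}, $\bx^k-\bhat{\bx}^k=\sum_{h\in J(k)}(\bx^{h+1}-\bx^h)\to 0$ a.s. By Fact~\ref{prop:stochfej}(i) the iterates are a.s.\ bounded; along a weakly convergent subsequence $\bx^{k_n}\rightharpoonup\bar{\bx}$ one then also has $\bbar{\bx}^{k_n+1}\rightharpoonup\bar{\bx}$ and $\bhat{\bx}^{k_n}\rightharpoonup\bar{\bx}$. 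The inclusion $\bGammas^{-1}\bDelta^k-\nabla f(\bhat{\bx}^k)\in\partial g(\bbar{\bx}^{k+1})$ (from \eqref{eq:20170921l}) combined with convexity of $f$ at $\bhat{\bx}^k$ gives, for every $\by\in\bHH$,
\[
F(\by)\ge g(\bbar{\bx}^{k+1})+f(\bhat{\bx}^k)+\langle\bGammas^{-1}\bDelta^k,\by-\bbar{\bx}^{k+1}\rangle+\langle\nabla f(\bhat{\bx}^k),\bbar{\bx}^{k+1}-\bhat{\bx}^k\rangle .
\]
Letting $n\to\infty$ along $k_n$ — the $\bDelta^k$‑term vanishes, and the last term vanishes because $\bbar{\bx}^{k_n+1}-\bhat{\bx}^{k_n}\to 0$ strongly while $(\nabla f(\bhat{\bx}^{k_n}))_n$ is bounded, $\nabla f$ being globally Lipschitz under Assumption~\ref{eq:A3} — and invoking weak lower semicontinuity of $f$ and $g$ yields $F(\by)\ge F(\bar{\bx})$ for all $\by$, i.e.\ $\bar{\bx}\in\argmin F$. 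So all weak cluster points lie a.s.\ in $\argmin F$, and Fact~\ref{prop:stochfej}(ii) gives $\bx^k\rightharpoonup\bx^*$ $\PP$‑a.s.\ for some $\argmin F$‑valued random variable $\bx^*$.

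\emph{Step 4: rates (part (ii)).} Taking $\bxs$ to be the $\norm{\cdot}_{\bWW}$‑projection of $\bx^0$ onto $\argmin F$ in the bound of Step~2 gives $\sum_{k\ge 0}2\,\EE[F(\bbar{\bx}^{k+1})-F^*]\le\mathrm{dist}_{\bWW}^2(\bx^0,\argmin F)+2C\,(F(\bx^0)-F^*)$. Folding the summable delay errors of the companion descent estimate into the sequence by a finite‑window weighting built with Fact~\ref{fact:decomp}, one constructs a nonincreasing $\R_+$‑valued sequence $(a_k)_{k\ge 1}$ with $a_k\ge\EE[F(\bx^k)]-F^*$ and $\sum_{k\ge 1}a_k\le\tfrac12\mathrm{dist}_{\bWW}^2(\bx^0,\argmin F)+C\,(F(\bx^0)-F^*)$, the constant $C$ being exactly the one produced by the boundary terms of the rearrangement. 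Fact~\ref{fact:convmonseq} then yields $a_k=o(1/(k+1))$ and $a_k\le\big(\tfrac12\mathrm{dist}_{\bWW}^2(\bx^0,\argmin F)+C\,(F(\bx^0)-F^*)\big)/(k+1)$, whence both claims of part~(ii).

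\emph{Main obstacle.} The heart of the argument is Step~1: the inconsistent delayed gradient $\nabla f(\bhat{\bx}^k)$ must be traded for $\nabla f(\bx^k)$ in such a way that the accumulated delay error can later be balanced against the descent budget $\sum_k\norm{\bDelta^k}^2$, and pinning the constants down so that $\delta<2$ is precisely the sharp threshold requires the right choice of the Young‑inequality split together with the Abel‑summation bookkeeping of Step~2 (which is also where the term $C(F(\bx^0)-F^*)$ originates). A secondary difficulty is converting the cumulative bound into the pointwise $o(1/k)$ and $\mathcal O(1/k)$ rates: this forces the passage to the auxiliary nonincreasing sequence $(a_k)$ rather than working with $\EE[F(\bx^k)]-F^*$ directly, the latter being monotone only up to a summable perturbation coming from the delays.
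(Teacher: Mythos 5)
Your plan follows the same route as the paper's proof: a one-step stochastic quasi-Fej\'er inequality in the $\norm{\cdot}_{\bWW}$ metric (the paper's Propositions~\ref{prop:20210104}--\ref{prop:20200126a}), absorption of the delay terms via the Abel rearrangement of Fact~\ref{fact:decomp} (exactly how the correction $\uuu_k$ is built in Proposition~\ref{prop:20211112a} and Lemma~\ref{lem:20210207}), then Fact~\ref{prop:stochfej} for part~(i) and Fact~\ref{fact:convmonseq} applied to a nonincreasing majorant for part~(ii). Your Step~3 identifies weak cluster points by passing to the limit directly in the subgradient inequality and using weak lower semicontinuity of $f$ and $g$, whereas the paper constructs $\bm{v}^k\in\partial F(\bbar{\bx}^{k+1})$ with $\bm{v}^k\to 0$ and invokes demiclosedness of the graph of $\partial F$ (Proposition~\ref{prop:20200126b}); both are valid and essentially interchangeable.

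One point in Step~1 needs correcting, and it propagates to Step~4. The function-value term you place on the left of the Lyapunov inequality is $2(F(\bbar{\bx}^{k+1})-F^*)$, the value at the \emph{full} virtual update. What the single-block randomized update actually delivers --- and what the $1/(\gamma_i\pp_i)$ weights of $\norm{\cdot}_{\bWW}$ are designed to produce --- is $2\big(F^*-\EE[F(\bx^{k+1})\mid i_0,\dots,i_{k-1}]\big)$ together with telescoping terms weighted by $1/\pp_{\min}$, as in Proposition~\ref{prop:20200126a}. Obtaining $F(\bbar{\bx}^{k+1})$ with the coefficient tied to $2-\delta$ would require a descent lemma for the simultaneous update of all blocks, which Assumption~\ref{eq:A3} only supplies with a Lipschitz constant that grows with $m$ (of order $\sqrt{m}\,L_{\mathrm{res}}$), destroying the stepsize rule; the whole point of the blockwise analysis is to avoid this. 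Moreover, part~(ii) requires summability and monotonicity of $\EE[F(\bx^{k})]-F^*$ (up to the correction $\uuu_k$), not of $\EE[F(\bbar{\bx}^{k+1})]-F^*$, and there is no pointwise inequality bridging the two; your Step~4 silently assumes such a bridge when it asserts $a_k\ge\EE[F(\bx^k)]-F^*$. With the conditional-expectation form the bridge is unnecessary: $\EE[F(\bx^{k})+\uuu_{k}]-F^*$ is nonincreasing by Proposition~\ref{prop:20211112a}, summable with the stated constant by Proposition~\ref{prop:20200126a}, and Fact~\ref{fact:convmonseq} applies directly since $\uuu_k\ge 0$. The remainder of your argument is sound.
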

\begin{remark}\label{rmk:20211216a}\
 \begin{enumerate}[label={\rm (\roman*)}]
 \item Theorem~\ref{thm:main1} extends classical results about the forward-backward algorithm to the asynchronous and stochastic block-coordinate setting. See \cite{salzo2021book} and reference therein. Moreover, we note that the above results, when specialized to the synchronous case, that is, $\tau=0$, yield exactly 
 \cite[Theorem~4.9]{salzo2021parallel}. The $o(1/k)$ was also proven in \cite{LeeWri19}.
 \item The almost sure weak convergence of the iterates for the asynchronous stochastic forward-backward algorithm is new.
 In general only convergence in value is provided
 or, in the nonconvex case, cluster points of the sequence of the iterates are proven to be almost surely stationary points \cite{davis2016asynchronous,cannelli2019asynchronous}.
 \item As it can be readily seen from statement \ref{thm:main1_ii} in Theorem~\ref{thm:main1}, our results depend only on the maximum possible delay, and therefore apply in the same way to the consistent and inconsistent read model.
 \item If we suppose that the random variables $(i_k)_{k \in \N}$ are uniformly distributed over $[m]$, the stepsize rule reduces to $\gamma_i< 2/(L_{i} + 2\tau L_{\mathrm{res}}/\sqrt{m})$, which agrees with that given in \cite{davis2016asynchronous} and gets better when the number of blocks $m$ increases. In this case, we see that the effect of the delay on the stepsize rule is mitigated by the number of blocks. In \cite{cannelli2019asynchronous} 
 the stepsize is not adapted to the blockwise Lipschitz constants $L_i$'s, but it is chosen for each block as $\gamma < 2/(2L_{f} + \tau^2 L_{f})$ with $L_f \geq L_{\mathrm{res}}$, leading, in general, to smaller stepsizes. In addition, this rule has a worse dependence on the delay $\tau$ and lacks of any dependence on the number of blocks.
 \item The framework of \cite{cannelli2019asynchronous}
 is nonconvex and considers more general types of algorithms, in the flavour of majorization-minimization approaches \cite{HunLan04}. On the other hand the   assumptions are stronger (in particular, they assume $F$ to be coercive) and the rate of convergence is given with respect to $\|\bx^k - \prox_{g}(\bx^k - \nabla f(\bx^k))\|^2$,
 a quantity which is hard to relate to $F(\bx^k) - F^*$. They also prove 
 that the cluster points of the sequence of the iterates
 are almost surely stationary points.
 \item \label{item:20211216a}
The work \cite{liu2015asynchronous}
was among the first to study an asynchronous 
version of the randomized coordinate 
gradient descent method. There, the
coordinates were selected at random with uniform probability and the stepsize was chosen the same for every coordinate. However, the stepsize was chosen to depend  exponentially on $\tau$, i.e as $\mathcal{O}(1/\rho^{\tau})$ with $\rho > 1$,
which is much worse than our $\mathcal{O}(1/\tau)$. The same problem affects the constant in front of the 
bound of the rate of convergence which indeed is  of the form $\mathcal{O}(\rho^{\tau})$.

To circumvent these limitations above they put a condition in Corollary $4.2$ that bounds how big the maximum delay $\tau$ can be:
\begin{equation}
\label{eq:20220902a}
 4 e \Lambda(\tau+1)^{2} \leq \sqrt{m},\quad   \Lambda = \frac{L_{\mathrm{res}}}{L_{\max }},
\end{equation}
where $m$ is the dimension of the space. However, this inequality is never satisfied if $\Lambda >\sqrt{m}/(4e)$, since this would imply
\[
(\tau+1)^{2} < 1,
\]
contradicting the fact that $\tau$ is a non-negative integer. An example where this happens is when we are dealing with a quadratic function with positive semidefinite Hessian $\mathsf{Q}\in \R^{n\times n}$. In this case
\[
L_{\mathrm{res}}=\max_{i}\|\mathsf{Q}_{\cdot i}\|_{2} \text{ and } L_{\max } =\max _{i}\left\|\mathsf{Q}_{\cdot i}\right\|_{\infty} \text{ with } \mathsf{Q}_{\cdot i} \text{ the } i\text{th column of } \mathsf{Q}.
\]
Say one column of $\mathsf{Q}$ has constant entries equal to $p > 0$, while the absolute value of all the other entries of $\mathsf{Q}$ are less than $p$. Then, 
\[
\Lambda = \frac{p\sqrt{m}}{p} = \sqrt{m}>\frac{\sqrt{m}}{4e}.
\]
In Section \ref{sect:20220825a}, we show two experiments
on real datasets for which condition \eqref{eq:20220902a} is not verified.

\end{enumerate}
\end{remark}
Before giving the proof of Theorem~\ref{thm:main1}, we present few preliminary results. The first one is a proposition showing that the function values are decreasing in expectation. The proof of this proposition, as well as those of the next intermediate results, are given in Appendix~\ref{sec:appB}.
\begin{proposition}\label{prop:20211112a}
Assume that $\delta < 2$ and let $(\bx^k)_{k \in \mathbb{N}}$ be the sequence generated by Algorithm \ref{algoAsymain}. Then, for every $k \in \N$,
\begin{align}\label{eq:20210106a}
(2-\delta) \frac{\pp_{\min}}{2}\norm{\bbar{\bx}^{k+1} - \bx^k}^2_{\bGammas^{-1}}
\leq 
 F(\bx^{k})+ \uuu_{k} -\EE\big[   F(\bx^{k+1})+ \uuu_{k+1}\,\big\vert\,i_0,\dots,i_{k-1}\big]
 \ \ \PP\text{-a.s.},
\end{align}
where $\displaystyle \uuu_k = \frac{L_{\mathrm{res}}^{\bVV}}{2\sqrt{\pp_{\max}}}
\sum_{h = k-\tau}^{k-1} (h-(k-\tau)+1)\norm{\bx^{h+1} - \bx^{h}}_{\bVV}^2$.
\end{proposition}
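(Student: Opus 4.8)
The plan is to track the "energy" $F(\bx^k) + \alpha_k$ along the iteration and show it decreases in conditional expectation by at least the stated amount. I would begin from the descent inequality \eqref{eq:20170921m} applied with $\bxs = \bx^k$ and $\bxs' = \bx^{k+1}$ (which differ only in block $i_k$), giving $F(\bx^{k+1}) \le f(\bx^k) + \langle \nabla_{i_k} f(\bx^k), x_{i_k}^{k+1} - x_{i_k}^k\rangle + \frac{L_{i_k}}{2}|x_{i_k}^{k+1} - x_{i_k}^k|^2 + g(\bx^{k+1})$. The term $\langle \nabla_{i_k} f(\bx^k), \cdot\rangle$ must be converted to $\langle \nabla_{i_k} f(\bhat\bx^k), \cdot\rangle$ at the cost of $\langle \nabla_{i_k} f(\bx^k) - \nabla_{i_k} f(\bhat\bx^k), x_{i_k}^{k+1} - x_{i_k}^k\rangle$, which by Cauchy–Schwarz, Young's inequality, and Lemma~\ref{p:20170921n} (or its $\bVV$-weighted version in Remark~\ref{rmk:20210618a}) is controlled by $L_{\mathrm{res}}$ times a sum of $\|\bx^{h+1} - \bx^h\|^2$ over $h \in J(k)$. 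Then I would feed the variational inequality \eqref{eq:20170925a} with $\bxs = \bx^k$ to cancel $\langle \nabla_{i_k} f(\bhat\bx^k), x_{i_k}^{k+1} - x_{i_k}^k\rangle + g_{i_k}(x_{i_k}^{k+1}) - g_{i_k}(x_{i_k}^k)$ against a term $\frac{1}{\gamma_{i_k}}|x_{i_k}^k - x_{i_k}^{k+1}|^2$. Collecting everything, one obtains a bound of the form
\begin{equation*}
F(\bx^{k+1}) \le F(\bx^k) - \Big(\frac{1}{\gamma_{i_k}} - \frac{L_{i_k}}{2} - c\Big)|x_{i_k}^k - x_{i_k}^{k+1}|^2 + (\text{delay terms involving } \|\bx^{h+1}-\bx^h\|^2,\ h\in J(k)),
\end{equation*}
where the delay terms come from the Young's-inequality splitting and the coefficient $c$ absorbs half of it.

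Next I would take conditional expectation given $i_0, \dots, i_{k-1}$. The key point is that $\bx^k$, $\bbar\bx^{k+1}$, $\bhat\bx^k$, and $J(k)$ are all measurable with respect to this sigma-algebra (by \eqref{eq:20170918b} and the assumption that $\bds^k$ is a deterministic parameter), so $\EE[|x_{i_k}^k - x_{i_k}^{k+1}|^2 \mid i_0,\dots,i_{k-1}] = \sum_i \pp_i |x_i^k - \bbar x_i^{k+1}|^2 = \|\bbar\bx^{k+1} - \bx^k\|^2_{\bVV}$; similarly the block-Lipschitz term produces $\sum_i \pp_i L_i |x_i^k - \bbar x_i^{k+1}|^2 \le L_{\max}\|\bbar\bx^{k+1}-\bx^k\|^2_{\bVV}$, but more carefully one keeps the per-block $L_i$ and recognizes the combination $\sum_i \pp_i(\gamma_i^{-1} - L_i/2 - \cdots)|\cdot|^2$, which after factoring is exactly $(2-\delta)\tfrac{1}{2}\|\bbar\bx^{k+1}-\bx^k\|^2_{\bVV,\bGammas^{-1}}$-type expression — matching the left-hand side $(2-\delta)\tfrac{\pp_{\min}}{2}\|\bbar\bx^{k+1}-\bx^k\|^2_{\bGammas^{-1}}$ after bounding $\pp_i \ge \pp_{\min}$. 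The delay terms $\|\bx^{h+1}-\bx^h\|^2$ for $h \le k-1$ are already deterministic given the conditioning, so they pass through the expectation unchanged.

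Finally, the telescoping device: the residual delay terms on the right are handled by the auxiliary potential $\alpha_k$, whose definition with the weights $(h-(k-\tau)+1)$ is precisely engineered (via Fact~\ref{fact:decomp}) so that $\alpha_k - \EE[\alpha_{k+1}\mid i_0,\dots,i_{k-1}]$ equals a positive multiple of $\sum_{h\in J(k)}\|\bx^{h+1}-\bx^h\|^2_{\bVV}$ minus a term that cancels what's left over — in other words, moving $\alpha$ to the left turns the messy delay sum into a clean telescoping difference. I would verify this by writing out $\alpha_k - \alpha_{k+1}$ using Fact~\ref{fact:decomp} with $n = k-\tau$ (and noting $\bx^{h+1}-\bx^h = 0$ for $h < 0$, by the convention $\bx^k = \bx^0$ for $k \in \{-\tau,\dots,-1\}$), confirming the coefficient $\frac{L_{\mathrm{res}}^{\bVV}}{2\sqrt{\pp_{\max}}}$ lines up with the $L_{\mathrm{res}}$-constant generated in the Young's-inequality step and that $J(k) \subset \{k-\tau,\dots,k-1\}$ makes the bound valid even though $J(k)$ may be a proper subset. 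The main obstacle I anticipate is the bookkeeping in this last step: choosing the Young parameter correctly so the coefficient of $\|\bx^{h+1}-\bx^h\|^2$ produced on the right is at most what $\alpha_k - \EE[\alpha_{k+1}\mid\cdot]$ can absorb, while simultaneously leaving enough of the $\frac{1}{\gamma_{i_k}}|x^k_{i_k}-x^{k+1}_{i_k}|^2$ term intact to yield the factor $(2-\delta)$ with the specific $\delta$ in \eqref{eq:20210618b} — this is where the factor $2\tau L_{\mathrm{res}}\pp_{\max}/\sqrt{\pp_{\min}}$ in $\delta$ has to emerge exactly, and getting the constants to match requires care with the $\bVV$-weighting in Remark~\ref{rmk:20210618a}.
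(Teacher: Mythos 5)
Your plan follows the paper's proof essentially step for step: block-coordinate descent lemma at $\bx^k \to \bx^{k+1}$, splitting off $\nabla_{i_k} f(\bx^k) - \nabla_{i_k} f(\bhat{\bx}^k)$, cancellation via \eqref{eq:20170925a} with $\bxs = \bx^k$ to produce the $\gamma_{i_k}^{-1}$ term, conditional expectation using measurability of $\bbar{\bx}^{k+1}$ and $J(k)$, and then the Young-plus-Fact~\ref{fact:decomp} telescoping that the paper isolates as Lemma~\ref{lem:20210207}, with the optimized Young parameter making $\delta$ emerge exactly as in \eqref{eq:20210618b}. The approach and all key ingredients match; no gaps.
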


\begin{lemma}\label{lem:20210207} Let $(\bx^k)_{k \in \N}$ be the sequence generated by Algorithm \ref{algoAsymain}.
Then for every $k \in \N$, we have
\begin{align*}
    \langle\nabla f(\bx^k)  - \nabla f(\bhat{\bx}^k), 
    \bbar{\bx}^{k+1} - \bx^k\rangle_{\bVV}
    \leq \tau L_{\mathrm{res}}^{\bVV} \sqrt{\pp_{\max}} \sum_{i=0}^m \pp_i |\bar{x_i}^{k+1} - x_i^k|^2 + \uuu_k - \EE\big[\uuu_{k+1}\,\big\vert\,i_0,\dots,i_{k-1}\big],
\end{align*}
where $\alpha_k$ is defined in Proposition~\ref{prop:20211112a}.
\end{lemma}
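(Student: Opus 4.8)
The plan is to combine a Cauchy--Schwarz estimate in the $\bVV$-inner product with the delayed-gradient bound of Remark~\ref{rmk:20210618a}, and then to recognize the resulting ``history term'' as the telescoping-type quantity built from $\uuu_k$. First, by Cauchy--Schwarz for $\scalarp{\cdot,\cdot}_{\bVV}$ and Remark~\ref{rmk:20210618a},
\[
\scalarp{\nabla f(\bx^k)-\nabla f(\bhat{\bx}^k),\,\bbar{\bx}^{k+1}-\bx^k}_{\bVV}
\le L_{\mathrm{res}}^{\bVV}\Big(\sum_{h\in J(k)}\norm{\bx^{h+1}-\bx^h}_{\bVV}\Big)\norm{\bbar{\bx}^{k+1}-\bx^k}_{\bVV}.
\]
Since $J(k)\subset\{k-\tau,\dots,k-1\}$ and all summands are nonnegative, I extend the sum to $\sum_{h=k-\tau}^{k-1}$, and then apply Young's inequality to each of the (exactly $\tau$) terms with weight $1/\sqrt{\pp_{\max}}$, i.e.\ $ab\le \tfrac{1}{2\sqrt{\pp_{\max}}}a^2+\tfrac{\sqrt{\pp_{\max}}}{2}b^2$, which yields
\[
\scalarp{\nabla f(\bx^k)-\nabla f(\bhat{\bx}^k),\,\bbar{\bx}^{k+1}-\bx^k}_{\bVV}
\le \frac{L_{\mathrm{res}}^{\bVV}}{2\sqrt{\pp_{\max}}}\sum_{h=k-\tau}^{k-1}\norm{\bx^{h+1}-\bx^h}_{\bVV}^2
+\frac{\tau L_{\mathrm{res}}^{\bVV}\sqrt{\pp_{\max}}}{2}\,\norm{\bbar{\bx}^{k+1}-\bx^k}_{\bVV}^2.
\]

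Next I rewrite the first sum. Set $a_h:=\norm{\bx^{h+1}-\bx^h}_{\bVV}^2$ and $c:=L_{\mathrm{res}}^{\bVV}/(2\sqrt{\pp_{\max}})$, so that $\uuu_k=c\sum_{h=k-\tau}^{k-1}\big(h-(k-\tau)+1\big)a_h$. Applying Fact~\ref{fact:decomp} with lower index $k-\tau$ and upper endpoint $k$, the two weighted sums produced are exactly $\uuu_k/c$ and $\uuu_{k+1}/c$, giving $\uuu_k-\uuu_{k+1}=c\sum_{h=k-\tau}^{k-1}a_h-c\tau a_k$, i.e.\ $c\sum_{h=k-\tau}^{k-1}a_h=\uuu_k-\uuu_{k+1}+c\tau\norm{\bx^{k+1}-\bx^k}_{\bVV}^2$. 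Now $\uuu_k$ and each $a_h$ with $h\le k-1$ are measurable with respect to $(i_0,\dots,i_{k-1})$, while $\bx^{k+1}-\bx^k=\Js_{i_k}\big(\bar{x}^{k+1}_{i_k}-x^k_{i_k}\big)$ with $i_k$ independent of $i_0,\dots,i_{k-1}$ and of the deterministic delay $\bds^k$; hence $\norm{\bx^{k+1}-\bx^k}_{\bVV}^2=\pp_{i_k}|\bar{x}^{k+1}_{i_k}-x^k_{i_k}|^2$ and $\EE\big[\norm{\bx^{k+1}-\bx^k}_{\bVV}^2\mid i_0,\dots,i_{k-1}\big]=\sum_{i=1}^m\pp_i^2\,|\bar{x}^{k+1}_i-x^k_i|^2$ $\PP$-a.s. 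Taking the conditional expectation therefore gives
\[
c\sum_{h=k-\tau}^{k-1}a_h=\uuu_k-\EE\big[\uuu_{k+1}\mid i_0,\dots,i_{k-1}\big]+c\tau\sum_{i=1}^m\pp_i^2\,|\bar{x}^{k+1}_i-x^k_i|^2\qquad\PP\text{-a.s.}
\]

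Finally, I bound $c\tau\sum_{i=1}^m\pp_i^2|\bar{x}^{k+1}_i-x^k_i|^2$ using $\pp_i^2\le\pp_{\max}\pp_i$, which gives $c\tau\sum_i\pp_i^2|\bar{x}^{k+1}_i-x^k_i|^2\le\tfrac{\tau L_{\mathrm{res}}^{\bVV}\sqrt{\pp_{\max}}}{2}\norm{\bbar{\bx}^{k+1}-\bx^k}_{\bVV}^2$. Adding this to the $\tfrac{\tau L_{\mathrm{res}}^{\bVV}\sqrt{\pp_{\max}}}{2}\norm{\bbar{\bx}^{k+1}-\bx^k}_{\bVV}^2$ term coming from the Young step produces precisely the coefficient $\tau L_{\mathrm{res}}^{\bVV}\sqrt{\pp_{\max}}$ in front of $\norm{\bbar{\bx}^{k+1}-\bx^k}_{\bVV}^2=\sum_{i=1}^m\pp_i|\bar{x}^{k+1}_i-x^k_i|^2$, and the claimed inequality follows (the index $i=0$ in the statement is to be read as $i=1$). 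The only delicate points are the index bookkeeping in Fact~\ref{fact:decomp}, needed so that the telescoping matches the exact weights in $\uuu_k$, and the conditioning argument isolating the single $i_k$-dependent summand $a_k$ inside $\uuu_{k+1}$; the rest is routine.
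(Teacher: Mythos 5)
Your proof is correct and follows essentially the same route as the paper's: Cauchy--Schwarz in the $\bVV$-inner product combined with Remark~\ref{rmk:20210618a}, a weighted Young inequality, the telescoping decomposition of Fact~\ref{fact:decomp} to produce $\uuu_k-\uuu_{k+1}$, and the conditional-expectation computation isolating $\EE\big[\norm{\bx^{k+1}-\bx^k}_{\bVV}^2\mid i_0,\dots,i_{k-1}\big]=\sum_i\pp_i^2|\bar{x}_i^{k+1}-x_i^k|^2$. The only (immaterial) difference is that you plug the optimal Young weight $1/\sqrt{\pp_{\max}}$ in from the start and use $\pp_i^2\le\pp_{\max}\pp_i$ at the end, whereas the paper carries a free parameter $s$ and optimizes it last; you are also right that the paper's $\sum_{i=0}^m$ is a typo for $\sum_{i=1}^m$.
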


The next two results extend \cite[Proposition 4.4, Proposition 4.5]{salzo2021parallel} to our more general setting.
\begin{lemma}
\label{p:20190313c}
Let $(\bx_k)_{k \in \N}$ be a sequence generated by Algorithm \ref{algoAsymain}. 
Let $k \in \N$ and let $\bx$ be an $\bHH$-valued  random variable
which is measurable w.r.t.~$i_1,\dots, i_{k-1}$. Then, 
\begin{equation}
\label{eq:20190313c}
\EE[\norm{\bx^{\iter+1} - \bx}_\bWW^2 \,\vert\, i_0,\dots, i_{k-1} ] -  \norm{\bx^\iter - \bx}^2_\bWW= 
\norm{\bar{\bx}^{\iter+1} - \bx}^2_{\bGammas^{-1}} 
 - \norm{\bx^\iter - \bx}^2_{\bGammas^{-1}}
\end{equation}
and
$\EE[\norm{\bx^{\iter+1} - \bx^\iter}_\bWW^2 \,\vert\, i_0,\dots, i_{k-1} ]
= \norm{\bar{\bx}^{\iter+1} - \bx^\iter}^2_{\bGammas^{-1}}$.
\end{lemma}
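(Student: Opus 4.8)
The plan is to reduce both identities to the fact that the norms $\norm{\cdot}_{\bWW}$ and $\norm{\cdot}_{\bGammas^{-1}}$ are separable over the blocks, together with the observation that, by \eqref{eq:20170921b}--\eqref{eq:20170918a}, $\bx^{k+1}$ agrees with $\bx^{k}$ in every block except the randomly selected one $i_k$, where it equals $\bar{x}^{k+1}_{i_k}$. Writing $\bx=(x_1,\dots,x_m)$ and recalling $\norm{\bzz}_{\bWW}^2=\sum_{i=1}^m(\gamma_i\pp_i)^{-1}|\zz_i|^2$, only the $i_k$-th summand of $\norm{\bx^{k+1}-\bx}_{\bWW}^2$ differs from the corresponding one of $\norm{\bx^{k}-\bx}_{\bWW}^2$, so
\begin{equation*}
\norm{\bx^{k+1}-\bx}_{\bWW}^2=\norm{\bx^{k}-\bx}_{\bWW}^2+\frac{1}{\gamma_{i_k}\pp_{i_k}}\Bigl(\,\bigl|\bar{x}^{k+1}_{i_k}-x_{i_k}\bigr|^2-\bigl|x^{k}_{i_k}-x_{i_k}\bigr|^2\,\Bigr).
\end{equation*}

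Next I would take $\EE[\,\cdot\mid i_0,\dots,i_{k-1}]$ on both sides. By \eqref{eq:20170918b}, and since the delay vectors $\bds^0,\dots,\bds^{k}$ are deterministic parameters, the random variables $\bx^{k}$ and $\bar{\bx}^{k+1}$ are $\sigma(i_0,\dots,i_{k-1})$-measurable; $\bx$ is measurable with respect to the same $\sigma$-algebra by hypothesis; and $i_k$ is independent of $(i_0,\dots,i_{k-1})$ with $\PP(i_k=j)=\pp_j$. Hence the conditional expectation of the correction term is obtained by averaging over $j\in[m]$ against the weights $\pp_j$, which cancels the factor $\pp_{i_k}$ in the denominator:
\begin{equation*}
\EE\Bigl[\tfrac{1}{\gamma_{i_k}\pp_{i_k}}\bigl(|\bar{x}^{k+1}_{i_k}-x_{i_k}|^2-|x^{k}_{i_k}-x_{i_k}|^2\bigr)\,\Big|\,i_0,\dots,i_{k-1}\Bigr]=\sum_{j=1}^m\frac{1}{\gamma_j}\bigl(|\bar{x}^{k+1}_{j}-x_{j}|^2-|x^{k}_{j}-x_{j}|^2\bigr),
\end{equation*}
and the right-hand side equals $\norm{\bar{\bx}^{k+1}-\bx}^2_{\bGammas^{-1}}-\norm{\bx^{k}-\bx}^2_{\bGammas^{-1}}$. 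Substituting back and rearranging gives \eqref{eq:20190313c}.

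For the second identity I would argue identically but more simply: since $\bx^{k+1}-\bx^{k}$ is supported on the block $i_k$, we have $\norm{\bx^{k+1}-\bx^{k}}_{\bWW}^2=(\gamma_{i_k}\pp_{i_k})^{-1}|\bar{x}^{k+1}_{i_k}-x^{k}_{i_k}|^2$, and taking $\EE[\,\cdot\mid i_0,\dots,i_{k-1}]$ as above turns the $\pp_{i_k}$ in the denominator into a $\pp_j$-weighted sum over $j\in[m]$, producing $\sum_{j=1}^m\gamma_j^{-1}|\bar{x}^{k+1}_{j}-x^{k}_{j}|^2=\norm{\bar{\bx}^{k+1}-\bx^{k}}^2_{\bGammas^{-1}}$.

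There is essentially no analytic difficulty here; the only point requiring care is the measurability bookkeeping of the second paragraph, namely the justification that $\bx^{k}$, $\bar{\bx}^{k+1}$ and $\bx$ may be treated as constants under $\EE[\,\cdot\mid i_0,\dots,i_{k-1}]$ while $i_k$ is averaged against its law $(\pp_j)_{j\in[m]}$ — which is exactly what \eqref{eq:20170918b}, together with the assumption that the $\bds^k$ are fixed parameters, delivers. Everything else reduces to the identity $\pp_j\cdot(\gamma_j\pp_j)^{-1}=\gamma_j^{-1}$ applied coordinatewise.
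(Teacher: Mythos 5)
Your proof is correct and follows essentially the same route as the paper's: expand the $\bWW$-norm blockwise, observe that only the $i_k$-th summand changes, and take the conditional expectation so that the weight $\pp_j$ cancels the $\pp_{i_k}$ in the denominator, turning $\norm{\cdot}_\bWW$ corrections into $\norm{\cdot}_{\bGammas^{-1}}$ ones. The only (immaterial) difference is that the paper obtains the second identity by substituting $\bx=\bx^k$ into the first, whereas you recompute it directly.
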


\begin{proposition}\label{prop:20210104} Let $(\bx_k)_{k \in \N}$ be a sequence generated by Algorithm \ref{algoAsymain}
and suppose that $\delta < 2$. Let $(\bbar{\bm{x}}^k)_{k \in \N}$ and $(\alpha_k)_{k \in \N}$ be defined as in \eqref{eq:20170921b} and in Proposition~\ref{prop:20211112a} respectively. Then, for every $k \in \N$, 
\begin{align*}
(\forall\, \bxs \in \bHH)\quad
\langle\bx^{k}-\bbar{\bx}^{k+1}, \bxs-\bx^{k}\rangle_{\bGammas^{-1}} &\leq \frac{1}{\pp_{\min }}
\big(F(\bx^k) + \uuu_k - \EE\big[ F(\bx^{k+1}) + \uuu_{k+1} \,\vert\, i_0,\dots, i_{k-1}\big]\big) \nonumber \\
&\qquad + F(\bxs) - F(\bx^k) 
+ \frac{\tau L_{\mathrm{res}}}{2}\sum_{h \in J(k)} \|\bx^h - \bx^{h+1}\|^2\nonumber \\
    &\qquad +\frac{\delta - 2}{2} \|\bx^k - \bm{\bar{x}}^{k+1}\|^2_{\bGammas^{-1}}.
\end{align*}
\end{proposition}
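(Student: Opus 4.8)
The plan is to start from the proximal optimality of the virtual full update $\bbar{\bx}^{k+1}$, split the resulting estimate into a ``convexity'' part and a ``descent'' part, control the first with the inexact convexity inequality of Lemma~\ref{eq:20210308a}, and re-couple the second with the one-step decrease of Proposition~\ref{prop:20211112a} together with the delayed-gradient estimate of Lemma~\ref{lem:20210207}; the step-size condition $\delta<2$ enters only at the very end. For the first step I use the second inclusion in \eqref{eq:20170921l}, i.e.\ $\gamma_i^{-1}(x^k_i-\bbar{x}^{k+1}_i)-\nabla_i f(\bhat{\bx}^k)\in\partial g_i(\bbar{x}^{k+1}_i)$ (equivalently, Lemma~\ref{lem:20200126a} applied in each $\HH_i$): testing the subgradient inequality at $\xs_i$, rewriting $\xs_i-\bbar{x}^{k+1}_i=(\xs_i-x^k_i)+(x^k_i-\bbar{x}^{k+1}_i)$, and summing over $i\in[m]$ yields
\[
\scalarp{\bx^k-\bbar{\bx}^{k+1},\bxs-\bx^k}_{\bGammas^{-1}}\ \le\ T+S,
\]
with $T:=g(\bxs)-g(\bx^k)+\scalarp{\nabla f(\bhat{\bx}^k),\bxs-\bx^k}$ and $S:=\sum_{i=1}^m S_i$, where $S_i:=g_i(x^k_i)-g_i(\bbar{x}^{k+1}_i)+\scalarp{\nabla_i f(\bhat{\bx}^k),x^k_i-\bbar{x}^{k+1}_i}-\gamma_i^{-1}\abs{x^k_i-\bbar{x}^{k+1}_i}^2$.

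By Lemma~\ref{eq:20210308a}, $\scalarp{\nabla f(\bhat{\bx}^k),\bxs-\bx^k}\le f(\bxs)-f(\bx^k)+\tfrac{\tau L_{\mathrm{res}}}{2}\sum_{h\in J(k)}\norm{\bx^h-\bx^{h+1}}^2$, hence $T\le F(\bxs)-F(\bx^k)+\tfrac{\tau L_{\mathrm{res}}}{2}\sum_{h\in J(k)}\norm{\bx^h-\bx^{h+1}}^2$, which already accounts for two of the terms in the claim. For $S$, the same inclusion from \eqref{eq:20170921l} and convexity of $g_i$ give $S_i\ge 0$ for every $i$; therefore, since $\pp_i\ge\pp_{\min}$, $S=\sum_i S_i\le\pp_{\min}^{-1}\sum_i\pp_i S_i$. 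For each $i$ let $\bx^{k,i}$ denote the vector obtained from $\bx^k$ by replacing its $i$-th block with $\bbar{x}^{k+1}_i$; then $\bx^{k,i}=\bx^{k+1}$ on the event $\{i_k=i\}$, and since $\bx^k,\bbar{\bx}^{k+1},\bhat{\bx}^k$ and $\uuu_k$ are measurable with respect to $i_0,\dots,i_{k-1}$ one has $\sum_i\pp_i F(\bx^{k,i})=\EE[F(\bx^{k+1})\mid i_0,\dots,i_{k-1}]$. The descent lemma \eqref{eq:20170921m} applied to $\bx^k$ and $\bx^{k,i}$, combined with the identity $F(\bx^k)-F(\bx^{k,i})=f(\bx^k)-f(\bx^{k,i})+g_i(x^k_i)-g_i(\bbar{x}^{k+1}_i)$, bounds $g_i(x^k_i)-g_i(\bbar{x}^{k+1}_i)$ from above and turns this into
\[
S_i\ \le\ F(\bx^k)-F(\bx^{k,i})+\scalarp{\nabla_i f(\bhat{\bx}^k)-\nabla_i f(\bx^k),\,x^k_i-\bbar{x}^{k+1}_i}+\Big(\tfrac{L_i}{2}-\tfrac{1}{\gamma_i}\Big)\abs{x^k_i-\bbar{x}^{k+1}_i}^2 .
\]

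Multiplying the last inequality by $\pp_i/\pp_{\min}$ and summing over $i$: the $F$-terms produce $\pp_{\min}^{-1}\big(F(\bx^k)-\EE[F(\bx^{k+1})\mid i_0,\dots,i_{k-1}]\big)$; the cross terms produce $\pp_{\min}^{-1}\scalarp{\nabla f(\bhat{\bx}^k)-\nabla f(\bx^k),\bx^k-\bbar{\bx}^{k+1}}_{\bVV}=\pp_{\min}^{-1}\scalarp{\nabla f(\bx^k)-\nabla f(\bhat{\bx}^k),\bbar{\bx}^{k+1}-\bx^k}_{\bVV}$, which by Lemma~\ref{lem:20210207} is $\le\pp_{\min}^{-1}\big(\tau L_{\mathrm{res}}^{\bVV}\sqrt{\pp_{\max}}\,\norm{\bbar{\bx}^{k+1}-\bx^k}^2_{\bVV}+\uuu_k-\EE[\uuu_{k+1}\mid i_0,\dots,i_{k-1}]\big)$; and the quadratic terms produce $\pp_{\min}^{-1}\sum_i\tfrac{\pp_i}{2\gamma_i}(\gamma_i L_i-2)\abs{x^k_i-\bbar{x}^{k+1}_i}^2$. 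The $F$- and $\uuu$-contributions assemble into $\pp_{\min}^{-1}\big(F(\bx^k)+\uuu_k-\EE[F(\bx^{k+1})+\uuu_{k+1}\mid i_0,\dots,i_{k-1}]\big)$, which is the first summand of the claim; and, using $\norm{\bbar{\bx}^{k+1}-\bx^k}^2_{\bVV}=\sum_i\pp_i\abs{x^k_i-\bbar{x}^{k+1}_i}^2$, the remaining quadratic part equals $\pp_{\min}^{-1}\sum_i\tfrac{\pp_i}{2\gamma_i}\big(2\gamma_i\tau L_{\mathrm{res}}^{\bVV}\sqrt{\pp_{\max}}+\gamma_i L_i-2\big)\abs{x^k_i-\bbar{x}^{k+1}_i}^2$. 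By the definition of $\delta$ in \eqref{eq:20210618b}, each coefficient $2\gamma_i\tau L_{\mathrm{res}}^{\bVV}\sqrt{\pp_{\max}}+\gamma_i L_i-2$ is $\le\delta-2<0$; since it is negative, replacing it by $\delta-2$ and $\pp_i$ by $\pp_{\min}$ only enlarges the sum, which then collapses to $\tfrac{\delta-2}{2}\sum_i\gamma_i^{-1}\abs{x^k_i-\bbar{x}^{k+1}_i}^2=\tfrac{\delta-2}{2}\norm{\bx^k-\bbar{\bx}^{k+1}}^2_{\bGammas^{-1}}$. Adding this bound on $S$ to the one on $T$ gives exactly the asserted inequality.

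The main obstacle is the passage $S=\sum_i S_i\le\pp_{\min}^{-1}\sum_i\pp_i S_i$: it is legitimate only because each $S_i$ is nonnegative --- which is precisely the proximal (subgradient) optimality of $\bbar{x}^{k+1}_i$ --- and it is exactly what forces the factor $1/\pp_{\min}$ in front of the telescoping decrease. Apart from this, the argument is bookkeeping, as long as one checks that $\bx^k,\bbar{\bx}^{k+1},\bhat{\bx}^k$ and $\uuu_k$ are measurable with respect to $i_0,\dots,i_{k-1}$, so that the conditional expectations behave as written, and keeps the $\bVV$-weighting straight when invoking Lemma~\ref{lem:20210207}, which is the source of the constants $L_{\mathrm{res}}^{\bVV}$ and $\pp_{\max}$ appearing in place of $L_{\mathrm{res}}$.
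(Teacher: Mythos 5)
Your argument is correct and follows essentially the same route as the paper's proof: the prox-optimality decomposition into a convexity part $T$ and a descent part $S$ is exactly Lemma~\ref{lem:20200126a} in the weighted norm, $T$ is handled by Lemma~\ref{eq:20210308a} as in the paper, and your weighting step $\sum_i S_i\le \pp_{\min}^{-1}\sum_i\pp_i S_i$ (justified by $S_i\ge 0$) is the same mechanism as the paper's splitting via $\tfrac{1}{\pp_{\min}}-\tfrac{1}{\pp_i}\ge 0$ combined with $A_i\ge\gamma_i^{-1}\abs{\Delta_i^k}^2$. The only difference is cosmetic: you carry deterministic sums $\sum_i\pp_i F(\bx^{k,i})$ and identify them with the conditional expectation at the end, while the paper phrases the same bookkeeping through conditional expectations throughout.
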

Next we state a proposition that we will use throughout the rest of this paper. It corresponds to  \cite[Proposition 4.6]{salzo2021parallel}.
\begin{proposition}\label{prop:20210519}
Let $(\bx_k)_{k \in \N}$ be a sequence generated by Algorithm \ref{algoAsymain}
and suppose that $\delta < 2$. 
Let $(\alpha_k)_{k \in \N}$ be defined as in Proposition~\ref{prop:20211112a}.
Then, for every $k \in \N$,
\begin{align}
(\forall\, \bxs \in \bHH)\quad    
\EE\big[&\norm{\bx^{k+1}-\bxs}^{2}_{\bWW}\,\vert\, i_0,\dots, i_{k-1}\big] \nonumber \\[1ex]
&\leq \norm{\bx^{k}-\bxs}^{2}_{\bWW} \nonumber \\
&\quad + \frac{2}{\pp_{\min }}\left(\frac{(\delta-1)_+}{2-\delta}+ 1\right)
\big( F(\bx^k)
+\uuu_k - \EE\big[F(\bx^{k+1})+\uuu_{k+1}\,\vert\, i_0,\dots, i_{k-1}\big] \big) \nonumber \\
&\quad + \tau L_{\mathrm{res}} \sum_{h \in J(k)} \|\bx^h - \bx^{h+1}\|^2 \nonumber\\
&\quad + 2(F(\bxs) - F(\bx^k)).
\end{align}
\end{proposition}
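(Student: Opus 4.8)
The plan is to derive the inequality by combining Proposition~\ref{prop:20210104}, the norm-identity Lemma~\ref{p:20190313c}, and the descent-type Proposition~\ref{prop:20211112a}, exactly as in the proof of \cite[Proposition 4.4]{salzo2021parallel} but carrying along the extra delay terms. First I would expand $\EE[\norm{\bx^{k+1}-\bxs}^2_{\bWW}\mid i_0,\dots,i_{k-1}]$ using the first identity in Lemma~\ref{p:20190313c}, which replaces it by $\norm{\bx^k - \bxs}^2_{\bWW} + \norm{\bbar{\bx}^{k+1}-\bxs}^2_{\bGammas^{-1}} - \norm{\bx^k - \bxs}^2_{\bGammas^{-1}}$. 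Then I would expand the two $\bGammas^{-1}$-norms via the elementary identity $\norm{\bbar{\bx}^{k+1}-\bxs}^2_{\bGammas^{-1}} - \norm{\bx^k-\bxs}^2_{\bGammas^{-1}} = -2\scalarp{\bx^k - \bbar{\bx}^{k+1}, \bxs - \bx^k}_{\bGammas^{-1}} - \norm{\bx^k - \bbar{\bx}^{k+1}}^2_{\bGammas^{-1}}$, so that the whole conditional expectation is controlled by $\norm{\bx^k-\bxs}^2_{\bWW}$ plus $-2\scalarp{\bx^k-\bbar{\bx}^{k+1},\bxs-\bx^k}_{\bGammas^{-1}}$ minus $\norm{\bx^k-\bbar{\bx}^{k+1}}^2_{\bGammas^{-1}}$.

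Next I would bound the inner-product term $-2\scalarp{\bx^k-\bbar{\bx}^{k+1},\bxs-\bx^k}_{\bGammas^{-1}}$ using Proposition~\ref{prop:20210104}: this contributes $\tfrac{2}{\pp_{\min}}$ times the telescoping difference $F(\bx^k)+\alpha_k - \EE[F(\bx^{k+1})+\alpha_{k+1}\mid i_0,\dots,i_{k-1}]$, plus $2(F(\bxs)-F(\bx^k))$, plus $\tau L_{\mathrm{res}}\sum_{h\in J(k)}\norm{\bx^h-\bx^{h+1}}^2$, plus $(\delta-2)\norm{\bx^k-\bbar{\bx}^{k+1}}^2_{\bGammas^{-1}}$. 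Adding the leftover $-\norm{\bx^k-\bbar{\bx}^{k+1}}^2_{\bGammas^{-1}}$ from the previous step, the coefficient of $\norm{\bx^k-\bbar{\bx}^{k+1}}^2_{\bGammas^{-1}}$ becomes $(\delta-2)-1 = \delta-3$, i.e.\ that term is $(\delta-3)\norm{\bx^k-\bbar{\bx}^{k+1}}^2_{\bGammas^{-1}}$, which we want to convert into the clean coefficient $\tfrac{2}{\pp_{\min}}\bigl(\tfrac{(\delta-1)_+}{2-\delta}\bigr)$ of the telescoping difference claimed in the statement.

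The main obstacle — and the only genuinely delicate step — is handling this residual negative multiple of $\norm{\bx^k-\bbar{\bx}^{k+1}}^2_{\bGammas^{-1}}$. When $\delta \le 1$ it is nonpositive and can simply be dropped, matching $(\delta-1)_+=0$. When $1<\delta<2$ I would instead invoke Proposition~\ref{prop:20211112a} in reverse: it gives $(2-\delta)\tfrac{\pp_{\min}}{2}\norm{\bbar{\bx}^{k+1}-\bx^k}^2_{\bGammas^{-1}} \le F(\bx^k)+\alpha_k - \EE[F(\bx^{k+1})+\alpha_{k+1}\mid i_0,\dots,i_{k-1}]$, hence $\norm{\bx^k-\bbar{\bx}^{k+1}}^2_{\bGammas^{-1}} \le \tfrac{2}{(2-\delta)\pp_{\min}}\bigl(F(\bx^k)+\alpha_k-\EE[F(\bx^{k+1})+\alpha_{k+1}\mid i_0,\dots,i_{k-1}]\bigr)$. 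Using $\delta-3 = (\delta-1)-2 < 0$, I bound $(\delta-3)\norm{\bx^k-\bbar{\bx}^{k+1}}^2_{\bGammas^{-1}} \le (\delta-1)\norm{\bx^k-\bbar{\bx}^{k+1}}^2_{\bGammas^{-1}} \le \tfrac{2(\delta-1)}{(2-\delta)\pp_{\min}}\bigl(F(\bx^k)+\alpha_k-\EE[F(\bx^{k+1})+\alpha_{k+1}\mid i_0,\dots,i_{k-1}]\bigr)$, which is exactly $\tfrac{2}{\pp_{\min}}\tfrac{(\delta-1)_+}{2-\delta}$ times the telescoping difference. Collecting the $\tfrac{2}{\pp_{\min}}$ term from the inner-product bound with this one yields the coefficient $\tfrac{2}{\pp_{\min}}\bigl(\tfrac{(\delta-1)_+}{2-\delta}+1\bigr)$, the $\tau L_{\mathrm{res}}\sum_{h\in J(k)}\norm{\bx^h-\bx^{h+1}}^2$ term carries over directly, and the $2(F(\bxs)-F(\bx^k))$ term is already present, completing the proof. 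A minor check throughout is that $\bbar{\bx}^{k+1}$ is measurable w.r.t.\ $i_0,\dots,i_{k-1}$ (and the delay parameters), so Lemma~\ref{p:20190313c} applies and the conditional expectations are well defined.
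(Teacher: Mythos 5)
Your overall strategy is exactly the paper's (expand via Lemma~\ref{p:20190313c}, polarize, invoke Proposition~\ref{prop:20210104}, then absorb the residual $\norm{\bx^k-\bbar{\bx}^{k+1}}^2_{\bGammas^{-1}}$ term through Proposition~\ref{prop:20211112a}), but there is a sign error in the middle that makes two consecutive steps invalid as written. The polarization identity is
\begin{equation*}
\norm{\bbar{\bx}^{k+1}-\bxs}^2_{\bGammas^{-1}} - \norm{\bx^k-\bxs}^2_{\bGammas^{-1}}
= \norm{\bx^k - \bbar{\bx}^{k+1}}^2_{\bGammas^{-1}} + 2\scalarp{\bx^k - \bbar{\bx}^{k+1},\, \bxs - \bx^k}_{\bGammas^{-1}},
\end{equation*}
i.e.\ both terms carry a plus sign, not the minus signs you wrote. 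With your signs you are then forced to upper-bound $-2\scalarp{\bx^k-\bbar{\bx}^{k+1},\bxs-\bx^k}_{\bGammas^{-1}}$, but Proposition~\ref{prop:20210104} only provides an upper bound on $+\scalarp{\bx^k-\bbar{\bx}^{k+1},\bxs-\bx^k}_{\bGammas^{-1}}$, so applying it to the negated inner product is not legitimate. These two slips compound to give you the residual coefficient $\delta-3$, whereas the correct computation gives $(\delta-2)+1=\delta-1$; your intermediate inequality with $\delta-3$ is strictly stronger than what the argument establishes and is not justified by the cited results.

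The error is benign for the end result: once the signs are fixed, the residual term is $(\delta-1)\norm{\bx^k-\bbar{\bx}^{k+1}}^2_{\bGammas^{-1}}$, and your final absorption step applies verbatim --- drop it when $\delta\le 1$, and for $1<\delta<2$ use Proposition~\ref{prop:20211112a} to bound it by $\frac{2(\delta-1)_+}{(2-\delta)\pp_{\min}}$ times the telescoping difference, which yields exactly the stated coefficient $\frac{2}{\pp_{\min}}\big(\frac{(\delta-1)_+}{2-\delta}+1\big)$. This is precisely the paper's route: it keeps the $(\delta-1)$ residual explicit in \eqref{eq:20210105c} and \eqref{eq:20210401b} and only absorbs it at the end by plugging in \eqref{eq:20210106a}; your case split $\delta\le 1$ versus $\delta>1$ is that same absorption made explicit. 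Your measurability remark for Lemma~\ref{p:20190313c} is fine since $\bxs$ is deterministic. Correct the two signs and the proof is complete and coincides with the paper's.
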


In the following, we show a general inequality from which we derive simultaneously the convergence 
of the iterates and the rate of convergence in expectation of the function values.
\begin{proposition}
\label{prop:20200126a}
Let $(\bx_k)_{k \in \N}$ be a sequence generated by Algorithm \ref{algoAsymain}
and suppose that $\delta < 2$. 
Let $(\alpha_k)_{k \in \N}$ be defined as in Proposition~\ref{prop:20211112a}.
Then, for all $\bxs \in \bHH$,
\begin{equation*}
\EE\big[\|\bx^{k+1}-\bxs\|^{2}_{\bWW}\,\vert\,
i_0,\dots, i_{k-1}\big] 
\leq \|\bx^{k}-\bxs\|^{2}_{\bWW} +
2\big( F(\bxs) -\EE\big[F(\bx^{k+1})+\uuu_{k+1}\,\vert\, i_0,\dots, i_{k-1}\big]\big)
  + \xi_k,
\end{equation*}
where $(\xi^k)_{k \in \mathbb{N}}$ is a  sequence of positive random variables
such that
\begin{equation}
\label{eq:20210620a}
\sum_{k \in \N}\EE[\xi_k] \leq 2 C(F(\bx^0)  - F^*),
\end{equation}
with $\displaystyle C = \frac{\max\left\{1,(2-\delta)^{-1}\right\}}{\pp_{\min}} -1
+ \frac{\tau}{\sqrt{\pp_{\min}}(2-\delta)}
\left( 1 + \frac{\pp_{\max}}{\sqrt{\pp_{\min}}}\right)$.
\end{proposition}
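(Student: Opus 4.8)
The plan is to start from Proposition~\ref{prop:20210519}, which already gives
\[
\EE\big[\norm{\bx^{k+1}-\bxs}^{2}_{\bWW}\,\vert\, i_0,\dots, i_{k-1}\big]
\leq \norm{\bx^{k}-\bxs}^{2}_{\bWW}
+ \frac{2}{\pp_{\min}}\Big(\tfrac{(\delta-1)_+}{2-\delta}+1\Big)\big(F(\bx^k)+\uuu_k-\EE[F(\bx^{k+1})+\uuu_{k+1}\,\vert\,\cdot]\big)
+ \tau L_{\mathrm{res}}\!\!\sum_{h\in J(k)}\!\!\norm{\bx^h-\bx^{h+1}}^2
+ 2(F(\bxs)-F(\bx^k)).
\]
The target inequality wants the coefficient on the telescoping term to be exactly $2$ (so that it combines with the $2F(\bxs)$ to give the clean $2(F(\bxs)-\EE[F(\bx^{k+1})+\uuu_{k+1}\,\vert\,\cdot])$ on the right), and everything else — the extra $\big(\tfrac{2}{\pp_{\min}}(\tfrac{(\delta-1)_+}{2-\delta}+1)-2\big)$ multiple of the telescoping term, the delay sum $\tau L_{\mathrm{res}}\sum_{h\in J(k)}\norm{\bx^h-\bx^{h+1}}^2$, and the $2F(\bx^k)$ minus whatever $F(\bx^k)$ pieces get absorbed — lumped into a single nonnegative random variable $\xi_k$. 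So first I would define
\[
\xi_k := \Big(\tfrac{2}{\pp_{\min}}\big(\tfrac{(\delta-1)_+}{2-\delta}+1\big)-2\Big)\big(F(\bx^k)+\uuu_k-\EE[F(\bx^{k+1})+\uuu_{k+1}\,\vert\,i_0,\dots,i_{k-1}]\big)
+ \tau L_{\mathrm{res}}\!\!\sum_{h\in J(k)}\!\!\norm{\bx^h-\bx^{h+1}}^2,
\]
observe $\xi_k\geq 0$ (the coefficient $\tfrac{2}{\pp_{\min}}(\cdots)-2\geq 0$ since $\pp_{\min}\leq 1$ and $\tfrac{(\delta-1)_+}{2-\delta}+1\geq 1$, and the bracket is nonnegative a.s.\ by Proposition~\ref{prop:20211112a} applied with the $\bGammas^{-1}$-norm squared on the left), and check that rearranging Proposition~\ref{prop:20210519} gives precisely the claimed bound with this $\xi_k$.

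The substantive part is bounding $\sum_k\EE[\xi_k]$. I would handle the two pieces of $\xi_k$ separately. For the telescoping piece: take total expectation, use that $\EE[\EE[\,\cdot\,\vert\,i_0,\dots,i_{k-1}]]=\EE[\cdot]$, and sum over $k$; the sum telescopes and, since $\uuu_k\geq 0$ and $F\geq F^*$, is bounded by $F(\bx^0)+\uuu_0-F^* = F(\bx^0)-F^*$ (because $\uuu_0=0$, the sum defining it being empty). For the delay piece: I need $\sum_k\EE\big[\sum_{h\in J(k)}\norm{\bx^h-\bx^{h+1}}^2\big]\leq$ (something)$\cdot(F(\bx^0)-F^*)$. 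Since $J(k)\subset\{k-\tau,\dots,k-1\}$, each index $h$ appears in at most $\tau$ of the sets $J(k)$, so $\sum_k\sum_{h\in J(k)}\norm{\bx^h-\bx^{h+1}}^2\leq\tau\sum_h\norm{\bx^h-\bx^{h+1}}^2$. Then I need a summable bound on $\EE[\norm{\bx^{h+1}-\bx^h}^2]$. This comes from Proposition~\ref{prop:20211112a} combined with the second identity of Lemma~\ref{p:20190313c}: $\EE[\norm{\bx^{k+1}-\bx^k}^2_{\bWW}\,\vert\,\cdot]=\norm{\bbar{\bx}^{k+1}-\bx^k}^2_{\bGammas^{-1}}$, and Proposition~\ref{prop:20211112a} says $(2-\delta)\tfrac{\pp_{\min}}{2}\norm{\bbar{\bx}^{k+1}-\bx^k}^2_{\bGammas^{-1}}\leq F(\bx^k)+\uuu_k-\EE[F(\bx^{k+1})+\uuu_{k+1}\,\vert\,\cdot]$; moreover $\norm{\cdot}^2\leq\gamma_{\max}\pp_{\max}\norm{\cdot}^2_{\bWW}$ — actually I should pass through the $\bVV$-norm to match the constants, using $\norm{\cdot}^2_{\bVV}\leq\pp_{\max}\norm{\cdot}^2$ and relating $\norm{\cdot}^2$ to $\norm{\cdot}^2_{\bGammas^{-1}}$ appropriately, so that after summing I get $\sum_h\EE[\norm{\bx^{h+1}-\bx^h}^2]\leq\tfrac{2}{(2-\delta)\pp_{\min}}\cdot\kappa\cdot(F(\bx^0)-F^*)$ for the right norm-conversion constant $\kappa$. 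Combining the two pieces with the factor $\tau$ and the explicit coefficient $\tfrac{2}{\pp_{\min}}(\tfrac{(\delta-1)_+}{2-\delta}+1)-2 = \tfrac{2}{\pp_{\min}}\cdot\tfrac{1}{2-\delta}\cdot\big(1-\tfrac{\pp_{\min}}{...}\big)$, and bookkeeping the arithmetic, should produce exactly the stated $C\leq\frac{\max\{1,(2-\delta)^{-1}\}}{\pp_{\min}}-1+\frac{\tau}{\sqrt{\pp_{\min}}(2-\delta)}\big(1+\frac{\pp_{\max}}{\sqrt{\pp_{\min}}}\big)$; note the appearance of $\sqrt{\pp_{\min}}$ rather than $\pp_{\min}$ signals that the $L_{\mathrm{res}}$-to-$L_{\mathrm{res}}^{\bVV}$ conversion (Remark~\ref{rmk:20210618a}) and the norm conversions are what generate the square roots, and the $\max\{1,(2-\delta)^{-1}\}$ comes from the telescoping-piece coefficient $\tfrac{2}{\pp_{\min}}(\tfrac{(\delta-1)_+}{2-\delta}+1)-2$ simplifying to $\tfrac{2}{\pp_{\min}}\max\{1,(2-\delta)^{-1}\}-2$.

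The main obstacle I expect is the constant-chasing in the delay term: getting the norm conversions ($\norm{\cdot}\leftrightarrow\norm{\cdot}_{\bVV}\leftrightarrow\norm{\cdot}_{\bGammas^{-1}}\leftrightarrow\norm{\cdot}_{\bWW}$), the factor of $\tau$ from the overlap counting of the sets $J(k)$, and the exact coefficient out front of Proposition~\ref{prop:20211112a} to all combine into the precise claimed $C$ rather than something merely proportional to it. A secondary subtlety is the interchange of $\sum_k$ and $\EE$ and the handling of conditional expectations — this is routine (Tonelli, nonnegativity) but must be stated, and one must be careful that $\xi_k$ as defined is genuinely $\geq 0$ pointwise a.s., which relies on Proposition~\ref{prop:20211112a}. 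Everything else is assembling inequalities already proved earlier in the paper.
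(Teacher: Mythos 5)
Your route is the paper's: start from Proposition~\ref{prop:20210519}, split off a factor $2$ of the telescoping quantity $T_k:=F(\bx^k)+\uuu_k-\EE[F(\bx^{k+1})+\uuu_{k+1}\mid i_0,\dots,i_{k-1}]$ to produce the clean term $2(F(\bxs)-\EE[F(\bx^{k+1})+\uuu_{k+1}\mid\cdot])$, dump the remainder into $\xi_k$, and bound $\sum_k\EE[\xi_k]$ by telescoping plus overlap-counting of the sets $J(k)$ combined with Proposition~\ref{prop:20211112a} and Lemma~\ref{p:20190313c}. However, your definition of $\xi_k$ has a concrete error. Writing $2(F(\bxs)-F(\bx^k))+2T_k=2(F(\bxs)-\EE[F(\bx^{k+1})+\uuu_{k+1}\mid\cdot])+2\uuu_k$, the rearrangement of Proposition~\ref{prop:20210519} forces
$\xi_k=(A_k-2)T_k+\tau L_{\mathrm{res}}\sum_{h\in J(k)}\|\bx^h-\bx^{h+1}\|^2+2\uuu_k$ with $A_k=\tfrac{2}{\pp_{\min}}\max\{1,(2-\delta)^{-1}\}$; your $\xi_k$ drops the $+2\uuu_k$. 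Since $\uuu_k\geq 0$, the per-step inequality with your smaller $\xi_k$ does not follow (the right-hand side is short by $2\uuu_k$), and the promised "check that rearranging gives precisely the claimed bound" would fail. The omission also propagates: you must additionally prove $\sum_k\EE[\uuu_k]<+\infty$ with an explicit constant, which your plan never addresses. It is not a free fix of bookkeeping only in the weak sense — the bound $\EE[\uuu_k]\leq\tfrac{\tau L_{\mathrm{res}}\pp_{\max}^2}{2\sqrt{\pp_{\min}}}\sum_{h=k-\tau}^{k-1}\EE[\|\bx^h-\bx^{h+1}\|^2]$ plus the same overlap/summability machinery is what generates the summand $\pp_{\max}/\sqrt{\pp_{\min}}$ inside the factor $\bigl(1+\pp_{\max}/\sqrt{\pp_{\min}}\bigr)$ of the stated $C$; with your $\xi_k$ you could only ever account for the "$1$".

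A second, smaller gap is in the constant-chasing you defer to "bookkeeping the arithmetic." The norm conversions alone yield $C=\tfrac{\max\{1,(2-\delta)^{-1}\}}{\pp_{\min}}-1+\tau^2\tfrac{L_{\mathrm{res}}\gamma_{\max}\pp_{\max}}{\pp_{\min}(2-\delta)}\bigl(1+\tfrac{\pp_{\max}}{\sqrt{\pp_{\min}}}\bigr)$, which still contains $\tau^2 L_{\mathrm{res}}\gamma_{\max}$. To reach the stated form with $\tfrac{\tau}{\sqrt{\pp_{\min}}(2-\delta)}$ you must invoke the stepsize rule $\gamma_i(L_i+2\tau L_{\mathrm{res}}\pp_{\max}/\sqrt{\pp_{\min}})<2$, which gives $\tau\gamma_{\max}L_{\mathrm{res}}<\sqrt{\pp_{\min}}/\pp_{\max}$; the square roots in the final $C$ come from this step, not (as you suggest) only from the $L_{\mathrm{res}}$-to-$L_{\mathrm{res}}^{\bVV}$ conversion. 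Everything else in your plan — nonnegativity of $\xi_k$ via Proposition~\ref{prop:20211112a}, $\uuu_0=0$, the factor $\tau$ from each index lying in at most $\tau$ of the sets $J(k)$, and Tonelli for exchanging $\sum_k$ with $\EE$ — matches the paper and is sound.
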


\begin{proposition}
\label{prop:20200126b}
Let $(\bx_k)_{k \in \N}$ be a sequence generated by Algorithm \ref{algoAsymain}
and suppose that $\delta < 2$. 
Let $(\bbar{\bm{x}}^k)_{k \in \N}$ be defined as in \eqref{eq:20170921b}. Then there exists a sequence of $\bHH$-valued random variables $(\bm{v}^k)_{k \in \N}$
such that the following assertions hold:
\begin{enumerate}[label={\rm (\roman*)}]
\item\label{prop:20200126b_i}  $\forall\, k \in \N\colon$ $\bm{v}^k \in \partial F(\bbar{\bm{x}}^{k+1})$ $\PP$-a.s.
\item\label{prop:20200126b_ii} $\bm{v}^k \to 0$ and $\bm{x}^{k} - \bbar{\bm{x}}^{k+1} \to 0$ $\PP$-a.s., as $k \to +\infty$.
\end{enumerate}
\end{proposition}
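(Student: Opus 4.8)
The plan is to extract the subgradient $\bm{v}^k$ from the optimality characterization of the proximal step, and then control its norm by the quantities already shown to vanish. First I would recall that $\bbar{\bx}^{k+1}_i = \prox_{\gamma_i g_i}(x_i^k - \gamma_i \nabla_i f(\bhat{\bx}^k))$ for every $i$, so by the characterization of the proximal operator, $\gamma_i^{-1}(x_i^k - \bbar{x}^{k+1}_i) - \nabla_i f(\bhat{\bx}^k) \in \partial g_i(\bbar{x}^{k+1}_i)$, which is exactly the second relation in \eqref{eq:20170921l}. Adding $\nabla_i f(\bbar{\bx}^{k+1})$ to both sides and collecting the blocks, I would define
\begin{equation*}
\bm{v}^k := \nabla f(\bbar{\bx}^{k+1}) - \nabla f(\bhat{\bx}^k) + \bGammas^{-1}(\bx^k - \bbar{\bx}^{k+1}) = \nabla f(\bbar{\bx}^{k+1}) - \nabla f(\bhat{\bx}^k) + \bGammas^{-1}\bDelta^k.
\end{equation*}
Since $\nabla f(\bbar{\bx}^{k+1}) + (\gamma_i^{-1}(x_i^k - \bbar{x}^{k+1}_i) - \nabla_i f(\bhat{\bx}^k))_i \in \nabla f(\bbar{\bx}^{k+1}) + \partial g(\bbar{\bx}^{k+1}) = \partial F(\bbar{\bx}^{k+1})$ (using separability of $g$ and differentiability of $f$), assertion \ref{prop:20200126b_i} follows immediately, $\PP$-a.s.

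For \ref{prop:20200126b_ii} the key input is Proposition~\ref{prop:20211112a}. Summing \eqref{eq:20210106a} over $k \in \N$ and using that $F(\bx^k) + \uuu_k$ is bounded below by $F^* \ge -\infty$ (indeed $\uuu_k \ge 0$ and $F \ge F^*$), together with the fact that the conditional-expectation telescoping plus the tower property and monotone convergence give $\sum_k \EE[\,\|\bbar{\bx}^{k+1} - \bx^k\|^2_{\bGammas^{-1}}\,] < +\infty$, I obtain $\sum_k \|\bbar{\bx}^{k+1} - \bx^k\|^2_{\bGammas^{-1}} < +\infty$ $\PP$-a.s., hence $\bx^k - \bbar{\bx}^{k+1} \to 0$ $\PP$-a.s.; equivalently $\bDelta^k \to 0$ and $\bGammas^{-1}\bDelta^k \to 0$ a.s. (Here I should note $\sum_k \|\bx^{k+1}-\bx^k\|^2 < \infty$ a.s.\ also follows, which I will use for the delay term.) It remains to show $\nabla f(\bbar{\bx}^{k+1}) - \nabla f(\bhat{\bx}^k) \to 0$ a.s. I would split this as $(\nabla f(\bbar{\bx}^{k+1}) - \nabla f(\bx^k)) + (\nabla f(\bx^k) - \nabla f(\bhat{\bx}^k))$. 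The first difference is bounded by $L_{\max}\|\bbar{\bx}^{k+1} - \bx^k\| \to 0$ via \eqref{eq:20170925d} (or directly block-Lipschitz continuity), since $\bx^k$ and $\bbar{\bx}^{k+1}$ differ in each block but the resolvent Lipschitz estimate still applies — more precisely I would use $\|\nabla f(\bbar{\bx}^{k+1})-\nabla f(\bx^k)\| \le \sqrt{m}\, L_{\max}\|\bbar{\bx}^{k+1}-\bx^k\|$ or the Lipschitz constant of $\nabla f$ — and the second difference is bounded by Lemma~\ref{p:20170921n}: $\|\nabla f(\bx^k) - \nabla f(\bhat{\bx}^k)\| \le L_{\mathrm{res}} \sum_{h \in J(k)} \|\bx^{h+1} - \bx^h\| \to 0$ a.s., because $J(k) \subset \{k-\tau,\dots,k-1\}$ has at most $\tau$ terms and each $\|\bx^{h+1}-\bx^h\| \to 0$ a.s.\ (being the tail of a summable series). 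Combining the three limits gives $\bm{v}^k \to 0$ $\PP$-a.s.

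The main obstacle is the rigorous passage from the conditional one-step decrease inequality \eqref{eq:20210106a} to an almost-sure summability statement: one must argue that $(F(\bx^k)+\uuu_k)_k$ is a nonnegative supermartingale (after subtracting $F^*$), invoke its almost-sure convergence, and then sum the nonnegative increments $\|\bbar{\bx}^{k+1}-\bx^k\|^2_{\bGammas^{-1}}$ — this requires a careful use of the conditioning on $i_0,\dots,i_{k-1}$ and of the Robbins–Siegmund / supermartingale convergence machinery, taking care that $\uuu_{k+1}$ depends on $\bx^{k+1}$ and hence on $i_k$. Once $\sum_k \|\bx^{k+1}-\bx^k\|^2 < \infty$ a.s.\ is established, the rest is routine Lipschitz bookkeeping over the finite delay window.
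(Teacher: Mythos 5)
Your proposal is correct and follows essentially the same route as the paper: the same subgradient $\bm{v}^k = \nabla f(\bbar{\bx}^{k+1}) - \nabla f(\bhat{\bx}^k) + \bGammas^{-1}\bDelta^k$ extracted from \eqref{eq:20170921l}, and the same summability argument from Proposition~\ref{prop:20211112a} (your split of $\nabla f(\bbar{\bx}^{k+1}) - \nabla f(\bhat{\bx}^k)$ through $\nabla f(\bx^k)$ is interchangeable with the paper's triangle inequality on $\|\bbar{\bx}^{k+1}-\bhat{\bx}^k\|$ via Lemma~\ref{p:20170918c}). The ``main obstacle'' you flag at the end is not one: no supermartingale or Robbins--Siegmund machinery is needed, since taking total expectations in \eqref{eq:20210106a}, telescoping, and bounding below by $F^*$ already gives $\sum_k \EE\big[\norm{\bbar{\bx}^{k+1}-\bx^k}^2_{\bGammas^{-1}}\big]<+\infty$, which yields almost-sure summability directly.
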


We are now ready to prove the main theorem.

\begin{proof}[Proof of Theorem \ref{thm:main1}]
\ref{thm:main1_i}: It follows from
Proposition \ref{prop:20200126a} that
\begin{equation*}
(\forall\, \bxs \in \argmin F)\quad
\EE\big[\|\bx^{k+1}-\bxs\|^{2}_{\bWW}\,\vert\,
i_0,\dots, i_{k-1}\big] 
\leq \|\bx^{k}-\bxs\|^{2}_{\bWW} + \xi_k,
\end{equation*}
where $(\xi_k)_{k \in \N}$ is a sequence of positive random variable which is $\PP$-a.s.~summable.
Thus, the sequence $(\bx^k)_{k\in \mathbb{N}}$ is stochastic quasi-Fej\'er with respect to $\argmin F$ in the norm $\norm{\cdot}_{\bWW}$ (which is equivalent to $\norm{\cdot}$). Then according to Fact~\ref{prop:stochfej} it is bounded $\PP$-a.s. We now prove that $\argmin F$ contains the weak cluster points of $(\bx^k)_{k\in \mathbb{N}}$ $\PP$-a.s. Indeed,
let $\Omega_1 \subset \Omega$ with $\PP(\Omega \setminus \Omega_1) = 0$ be such that
items \ref{prop:20200126b_i} and \ref{prop:20200126b_ii} of Proposition~\ref{prop:20200126b} hold.
Let $\omega \in \Omega_1$ and
let $\bxs$ be a weak cluster point of $(\bx^{k}(\omega))_{k \in \mathbb{N}}$. There exists a subsequence $(\bx^{k_q}(\omega))_{q \in \mathbb{N}}$ which weakly converges to $\bxs$. 
By Proposition~\ref{prop:20200126b}, we have
$\bbar{\bx}^{k_q+1}(\omega) \rightharpoonup \bxs$,
 $\bm{v}^{k_q+1}(\omega) \to 0$, and 
$\bm{v}^{k_q+1}(\omega) \in \partial (f+g)(\bbar{\bm{x}}^{k_q+1}(\omega))$.
Thus, \cite[Proposition~1.6 (demiclosedness of the graph of the subgradient)]{marcellin2006evolution} yields $0 \in \partial F(\bxs)$ and hence $\bxs \in \argmin F$.
Therefore, again by Fact~\ref{prop:stochfej} we conclude that the sequence 
$(\bx^k)_{k \in \N}$ weakly converges to a random variable that takes value in $\argmin F$ $\PP$-a.s.

\ref{thm:main1_ii}:
Choose $\bxs \in \argmin F$ in 
Proposition~\ref{prop:20200126a} and then take the expectation. Then we get
\begin{equation*}
\EE[F(\bx^{k+1})+\uuu_{k+1}]- F^* \leq \frac{1}{2} \big(
 \EE[\norm{\bx^{k}-\bxs}^{2}_{\bWW}] - \EE[\norm{\bx^{k+1}-\bxs}^{2}_{\bWW}] \big)
  + \frac 1 2 \EE[\xi_k].
\end{equation*}
Since $\sum_{k \in \N}(\EE[\norm{\bx^{k}-\bxs}^{2}_{\bWW}] - \EE[\norm{\bx^{k+1}-\bxs}^{2}_{\bWW}]) \leq \norm{\bx^{0}-\bxs}^{2}_{\bWW}$, and recalling the bound on $\sum_{k \in \N}\EE[\xi_k]$ in \eqref{eq:20210620a}, we have
\begin{equation*}
\sum_{k \in \N}\big(\EE[F(\bx^{k+1})+\uuu_{k+1}]- F^*\big) 
\leq \frac{\norm{\bx^{0}-\bxs}^{2}_{\bWW}}{2}
+C(F(\bx^0)  - F^*).
\end{equation*}
Thus, since, in virtue of equation~\ref{eq:20210106a}, 
$(\EE[F(\bx^{k+1})+\uuu_{k+1}]- F^*)_{k \in \N}$
is decreasing, 
the statement follows from Fact~\ref{fact:convmonseq},
considering that $\alpha_k \geq 0$.
\end{proof}
\section{Linear convergence under error bound condition}
\label{sec:errorbound}
In the previous section we get a sublinear rate of convergence. Here we show that 
with an additional assumption we can get a better convergence rate. Also, we derive a strong convergence of the iterates, improving the weak convergence proved in Theorem~\ref{thm:main1}.

We will assume that the following \emph{Luo-Tseng error bound condition} \cite{luo1993error} holds on a subset 
$\bXX \subset \bHH$ (containing the iterates $\bx^k$).
\begin{align}\label{eb}
(\forall \bxs \in \bXX) \quad \operatorname{dist}_{\bGammas^{-1}}\left(\bxs, \argmin F\right) \leq C_{\bXX, \bGammas^{-1}} \big\|\bxs-\prox_{g}^{\bGammas^{-1}}\big(\bxs-\nabla^{\bGammas^{-1}} f(\bxs)\big)\big\|_{\bGammas^{-1}}.
\end{align}
\begin{remark}
 We recall that the condition above is equivalent to the  Kurdyka-Lojasiewicz property and 
 the quadratic growth condition \cite{drusvyatskiy2018error, bolte2017error, salzo2021parallel}. Any of these conditions can be used to prove linear convergence rates for various algorithms.
\end{remark}
The following theorem is the main result of this section. Here, linear convergence of the function values and strong convergence of the iterates are ensured.
 

\begin{theorem}
\label{thm:main3} 
Let $(\bx^k)_{k \in \N}$ be generated by Algorithm \ref{algoAsymain} 
and suppose $\delta<2$ and that the error bound condition \eqref{eb} holds with $\bXX \supset 
\{\bx^k\,\vert\,k \in \N\}$ $\PP$-a.s.~for some $C_{\bXX, \bGammas^{-1}} > 0$.
Then for all $k \in \mathbb{N}$,
\begin{enumerate}[label={\rm (\roman*)}]
    \item \label{thm:scnd_i}
        $\displaystyle \EE\big[F(\bx^{k+1})-F^*\big] 
        \leq \left( 1 - \frac{\pp_{\min }}{\kappa+\theta}\right)^{\lfloor \frac{k+1}{\tau + 1} \rfloor} \EE\big[F(\bx^{0})-F^*\big],$
      
    where 
    \begin{align*}
    \kappa &= 1 + \frac{(2C_{\bXX, \bGammas^{-1}} + \delta-2)_+}{2-\delta}=\max\left\{1,\frac{2C_{\bXX, \bGammas^{-1}}}{2-\delta}\right\}\\
    \theta &= \frac{\tau L_{\mathrm{res}}\gamma_{\max}}{2-\delta} \left(\frac{\pp^2_{\max}}{\sqrt{\pp_{\min}}} + 1 \right) \leq  \frac{\sqrt{\pp_{\min}}}{\pp_{\max}(2-\delta)} \left(\frac{\pp^2_{\max}}{\sqrt{\pp_{\min}}} + 1 \right).
    \end{align*}
    \item \label{thm:scnd_ii} The sequence $(\bx^k)_{k\in \mathbb{N}}$
converges strongly $\PP$-a.s.~to a random variable $\bx^*$ that takes values in $\argmin F$ and  $ \EE\big[\norm{\bx^{k} - \bx^*}_{\bGammas^{-1}}\big] = \mathcal{O}\big(\big( 1 - \pp_{\min}/(\kappa+\theta)\big)^ {\lfloor \frac{k}{\tau + 1} \rfloor/2}\big)$.
\end{enumerate}
\end{theorem}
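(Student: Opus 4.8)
The plan is to combine the key Fej\'er-type inequality of Proposition~\ref{prop:20210519} with the error bound condition \eqref{eb} to obtain a contraction over blocks of $\tau+1$ iterations. First I would start from Proposition~\ref{prop:20210519}: choosing $\bxs$ to be the projection of $\bx^k$ onto $\argmin F$ in the $\bGammas^{-1}$-norm, the term $\norm{\bx^k - \bxs}_{\bWW}^2$ becomes essentially $\mathrm{dist}^2_{\bWW}(\bx^k,\argmin F)$, which is controlled by $\mathrm{dist}^2_{\bGammas^{-1}}(\bx^k,\argmin F)$ up to the factor $\pp_{\min}^{-1}$. Then \eqref{eb} bounds $\mathrm{dist}_{\bGammas^{-1}}(\bx^k, \argmin F)$ by $C_{\bXX,\bGammas^{-1}}\norm{\bx^k - \bbar{\bx}^{k+1}}_{\bGammas^{-1}}$ (recognizing $\bbar{\bx}^{k+1}$ as the full proximal-gradient step in the $\bGammas^{-1}$ metric, which coincides with the prox appearing in the error bound up to rescaling), and Lemma~\ref{p:20190313c} converts $\norm{\bx^k-\bbar{\bx}^{k+1}}^2_{\bGammas^{-1}}$ into $\EE[\norm{\bx^{k+1}-\bx^k}_{\bWW}^2\mid i_0,\dots,i_{k-1}]$. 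Together with Proposition~\ref{prop:20211112a}, which relates $\norm{\bbar{\bx}^{k+1}-\bx^k}^2_{\bGammas^{-1}}$ to the one-step decrease of the Lyapunov quantity $F(\bx^k)+\uuu_k$, this should yield a one-step inequality of the form
\begin{equation*}
\EE\big[F(\bx^{k+1}) + \uuu_{k+1} - F^* \,\big|\, i_0,\dots,i_{k-1}\big] + a_k \leq \big(1 - c\big)\big(F(\bx^k) + \uuu_k - F^*\big) + (\text{delay terms}),
\end{equation*}
where $c = \pp_{\min}/(\kappa+\theta)$ and $a_k$ collects nonnegative leftovers; the delay terms are the $\tau L_{\mathrm{res}}\sum_{h\in J(k)}\norm{\bx^h-\bx^{h+1}}^2$ contributions from Proposition~\ref{prop:20210519}.

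The main obstacle, as I see it, is handling the delayed terms $\sum_{h\in J(k)}\norm{\bx^h-\bx^{h+1}}^2$: unlike in the synchronous case one does not get a clean contraction at every single step, because the right-hand side at iteration $k$ references squared increments from up to $\tau$ steps in the past. The standard device here — and the reason the exponent in the statement is $\lfloor (k+1)/(\tau+1)\rfloor$ rather than $k+1$ — is to sum the one-step inequality over a window of $\tau+1$ consecutive indices. Over such a window each past increment $\norm{\bx^h-\bx^{h+1}}^2$ appears a bounded number of times, and these increments are in turn dominated (via Proposition~\ref{prop:20211112a} and Lemma~\ref{p:20190313c}, since $\norm{\bx^{h+1}-\bx^h}^2_{\bWW}$ is comparable to the one-step Lyapunov decrease) by the telescoping decrease of $F(\bx^\cdot)+\uuu_\cdot$ across the window. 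After taking total expectations and rearranging, the window-summed delay terms get absorbed into the left-hand side, leaving
\begin{equation*}
\EE[F(\bx^{k+\tau+1}) - F^*] \leq \EE[F(\bx^{k+\tau+1}) + \uuu_{k+\tau+1} - F^*] \leq \Big(1 - \frac{\pp_{\min}}{\kappa+\theta}\Big)\,\EE[F(\bx^k) + \uuu_k - F^*],
\end{equation*}
using $\uuu_{k+\tau+1}\geq 0$ on the left and, crucially, $\uuu_0 = 0$ (the empty sum) when iterating from $k=0$. Iterating $\lfloor (k+1)/(\tau+1)\rfloor$ times and monotonicity of $\EE[F(\bx^k)+\uuu_k - F^*]$ from Proposition~\ref{prop:20211112a} give part \ref{thm:scnd_i}. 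The explicit constants $\kappa$ and $\theta$ should emerge from carefully tracking: $\kappa$ from the $\frac{1}{\pp_{\min}}\big(\frac{(\delta-1)_+}{2-\delta}+1\big)$ coefficient in Proposition~\ref{prop:20210519} combined with the error-bound constant $C_{\bXX,\bGammas^{-1}}$, and $\theta$ from the delay coefficient $\tau L_{\mathrm{res}}$ together with the $\bVV$-to-$\bGammas^{-1}$ norm conversions and the bound $\gamma_{\max} \leq$ (stepsize bound) used in the stated simplification $\theta \leq \frac{\sqrt{\pp_{\min}}}{\pp_{\max}(2-\delta)}(\pp_{\max}^2/\sqrt{\pp_{\min}} + 1)$.

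For part \ref{thm:scnd_ii}, the plan is first to upgrade the weak a.s.\ convergence of Theorem~\ref{thm:main1}\ref{thm:main1_i} to strong a.s.\ convergence. Since $\bx^k \rightharpoonup \bx^*$ a.s.\ with $\bx^* \in \argmin F$, and since the error bound gives $\mathrm{dist}_{\bGammas^{-1}}(\bx^k,\argmin F) \leq C_{\bXX,\bGammas^{-1}}\norm{\bx^k - \bbar{\bx}^{k+1}}_{\bGammas^{-1}} \to 0$ a.s.\ (the right side tends to $0$ by Proposition~\ref{prop:20200126b}\ref{prop:20200126b_ii}), one knows the iterates approach the solution set; combining with the quasi-Fej\'er property (the sequence $\norm{\bx^k - \bxs}_{\bWW}$ converges a.s.\ for each $\bxs \in \argmin F$, by Robbins–Siegmund applied to the inequality in the proof of Theorem~\ref{thm:main1}\ref{thm:main1_i}) forces strong convergence by a standard argument: pick $\bzz^k \in \argmin F$ with $\norm{\bx^k - \bzz^k}_{\bGammas^{-1}} = \mathrm{dist}_{\bGammas^{-1}}(\bx^k,\argmin F)\to 0$, note $\bzz^k \rightharpoonup \bx^*$, and use that $\norm{\bx^k - \bx^*}_{\bWW}$ converges while a subsequence tends to $0$. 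For the rate, I would go back to the quasi-Fej\'er inequality and sum Proposition~\ref{prop:20200126a} (or Proposition~\ref{prop:20210519}) from iteration $k$ onward with $\bxs = \bx^*$: the tail $\sum_{j \geq k}(\text{one-step decrease of }F+\uuu)$ is bounded by $\EE[F(\bx^k)+\uuu_k - F^*]$ up to constants, which decays geometrically with ratio $(1-\pp_{\min}/(\kappa+\theta))^{1/(\tau+1)}$ by part \ref{thm:scnd_i}; this controls $\EE[\norm{\bx^k - \bx^*}^2_{\bWW}]$, and hence $\EE[\norm{\bx^k-\bx^*}^2_{\bGammas^{-1}}]$, and Jensen's inequality ($\EE\|Y\| \leq \sqrt{\EE\|Y\|^2}$) produces the stated $\mathcal{O}\big((1-\pp_{\min}/(\kappa+\theta))^{\lfloor k/(\tau+1)\rfloor/2}\big)$ bound on $\EE[\norm{\bx^k - \bx^*}_{\bGammas^{-1}}]$.
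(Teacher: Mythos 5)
Your overall architecture for part \ref{thm:scnd_i} --- combine the error bound with a one-step inequality for $F(\bx^k)+\uuu_k-F^*$, absorb the delayed increments by telescoping the Lyapunov decrease over $\tau+1$ steps, and get a contraction with exponent $\lfloor (k+1)/(\tau+1)\rfloor$ --- matches the paper. But there are two concrete problems.

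First, you start from Proposition~\ref{prop:20210519}, where the solution set enters through the \emph{squared} distance $\|\bx^k-\bxs\|^2_{\bWW}$. Applying \eqref{eb} there produces $C_{\bXX,\bGammas^{-1}}^2\|\bx^k-\bbar{\bx}^{k+1}\|^2_{\bGammas^{-1}}$, i.e.\ a quadratic dependence on the error-bound constant, and it also drags along the coefficient $\frac{1}{\pp_{\min}}\bigl(\frac{(\delta-1)_+}{2-\delta}+1\bigr)$; neither is compatible with the stated $\kappa=\max\{1,2C_{\bXX,\bGammas^{-1}}/(2-\delta)\}$. The paper instead starts from Proposition~\ref{prop:20210104}, whose left-hand side is the \emph{inner product} $\langle\bx^{k}-\bbar{\bx}^{k+1},\bxs-\bx^{k}\rangle_{\bGammas^{-1}}$; Cauchy--Schwarz turns this into $\|\bx^{k}-\bbar{\bx}^{k+1}\|_{\bGammas^{-1}}\,\mathrm{dist}_{\bGammas^{-1}}(\bx^k,\argmin F)$, so the error bound enters \emph{linearly} and, combined with the $\frac{\delta-2}{2}\|\bx^k-\bbar{\bx}^{k+1}\|^2_{\bGammas^{-1}}$ term already present, yields exactly $(2C_{\bXX,\bGammas^{-1}}+\delta-2)_+/2$ times the squared step, whence the stated $\kappa$ after invoking \eqref{eq:20210106a}. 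Your route would still give \emph{a} linear rate, but not the theorem as stated. (Also, the paper does not sum the one-step inequality over a window; it bounds $\sum_{h=k-\tau}^{k-1}\EE[\|\bbar{\bx}^{h+1}-\bx^h\|^2_{\bGammas^{-1}}]$ directly by the telescoped decrease $\EE[F(\bx^{k-\tau})+\uuu_{k-\tau}]-\EE[F(\bx^{k+1})+\uuu_{k+1}]$, obtaining a skip-$\tau$ recursion $\EE[F(\bx^{k+1})+\uuu_{k+1}-F^*]\le(1-\pp_{\min}/(\kappa+\theta))\,\EE[F(\bx^{k-\tau})+\uuu_{k-\tau}-F^*]$; the effect is the same.)

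Second, in part \ref{thm:scnd_ii} your rate argument sets $\bxs=\bx^*$ in Proposition~\ref{prop:20200126a}. That proposition (via Lemma~\ref{p:20190313c}) is only valid for $\bxs$ deterministic or measurable with respect to $i_0,\dots,i_{k-1}$; the limit $\bx^*$ depends on the entire future of the process, so the conditional-expectation identities behind the Fej\'er inequality fail for it. The paper sidesteps this entirely: from part \ref{thm:scnd_i}, \eqref{eq:20210106a} and Jensen, $\EE\|\bx^{k+1}-\bx^k\|_{\bGammas^{-1}}$ decays like $\rho^{\lfloor k/(\tau+1)\rfloor/2}$, hence is summable; this gives the a.s.\ Cauchy property (and thus strong convergence, without needing your weak-limit-plus-distance argument, whose step ``a subsequence of $\|\bx^k-\bx^*\|$ tends to $0$'' is itself not justified from $\mathrm{dist}(\bx^k,\argmin F)\to0$ and weak convergence alone), and the rate follows from $\|\bx^k-\bx^*\|_{\bGammas^{-1}}\le\sum_{i\ge0}\|\bx^{k+i+1}-\bx^{k+i}\|_{\bGammas^{-1}}$ by summing the geometric tail. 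You should replace your tail-summation of the Fej\'er inequality by this increment-summation argument.
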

\begin{proof}
\ref{thm:scnd_i}: From Proposition~\ref{prop:20210104} we have
\begin{align*}
\frac{1}{\pp_{\min }}&\EE\big[F(\bx^{k+1}) 
+ \uuu_{k+1} - F(\bx^k) - \uuu_k\,\vert\, i_0,\dots, i_{k-1}\big] \nonumber\\
&\leq \|\bx^{k}-\bbar{\bx}^{k+1}\|_{\bGammas^{-1}}\|\bx^{k}-\bxs\|_{\bGammas^{-1}} \nonumber \\
&\qquad + F(\bxs) - F(\bx^k) + \frac{\tau L_{\mathrm{res}}}{2}
\sum_{h \in J(k)} \|\bx^h - \bx^{h+1}\|^2 \nonumber \\
&\qquad +\frac{\delta -2}{2}\|\bx^k - \bm{\bar{x}}^{k+1}\|^2_{\bGammas^{-1}},
\end{align*}
where $\uuu_k = (L_{\mathrm{res}}/(2\sqrt{\pp_{\min}}))
\sum_{h = k-\tau}^{k-1} (h-(k-\tau)+1)\norm{\bx^{h+1} - \bx^{h}}_{\bVV}^2$.
Now, taking $\bxs \in \argmin F$ and using the  error bound condition \ref{eb} 
and equation \ref{eq:20210106a}, we obtain
\begin{align}
\frac{1}{\pp_{\min }}\EE&\big[F(\bx^{k+1})+\uuu_{k+1}-F(\bx^k)) - \uuu_k\,\vert\, i_0,\dots, i_{k-1}\big]
\nonumber\\
&\leq \left(C_{\bXX, \bGammas^{-1}} 
+ \frac{\delta -2}{2}\right) \|\bx^k - \bbar{\bx}^{k+1}\|^2_{\bGammas^{-1}} \nonumber \\
&\qquad - (F(\bx^k) - F^*) + \frac{\tau L_{\mathrm{res}}}{2}
\sum_{h = k-\tau}^{k-1} \|\bx^h - \bx^{h+1}\|^2 \nonumber \\
&\leq  \frac{(2C_{\bXX, \bGammas^{-1}} + \delta -2)_+}{(2-\delta)\pp_{\min}} 
 \EE\big[F(\bx^k) + \uuu_k - F(\bx^{k+1}) - \uuu_{k+1}\,\vert\, i_0,\dots, i_{k-1}\big] 
\nonumber \\
&\qquad - (F(\bx^k) - F^*) + \frac{\tau L_{\mathrm{res}}}{2}
\sum_{h = k-\tau}^{k-1} \|\bx^h - \bbar{\bx}^{h+1}\|^2,
\end{align}
Adding and removing $F^*$ in both expectation yield
\begin{align}
\label{eq:20210119d}
\kappa\EE\big[F(\bx^{k+1})+\uuu_{k+1}-F^*\,\vert\, i_0,\dots, i_{k-1}\big] 
&\leq \kappa \EE\big[F(\bx^{k})+\uuu_{k}-F^*\,\vert\, i_0,\dots, i_{k-1}\big] \nonumber\\
&\qquad + \frac{\tau L_{\mathrm{res}}\gamma_{\max} \pp_{\min}}{2}
\sum_{h = k-\tau}^{k-1} \|\bx^h - \bbar{\bx}^{h+1}\|^2_{\bGammas^{-1}} \nonumber \\
&\qquad - \pp_{\min } (F(\bx^k) + \uuu_k - F^*) + \pp_{\min }\uuu_k,
\end{align}
where $\kappa = 1 + (2C_{\bXX, \bGammas^{-1}} + \delta -2 )_+/(2-\delta)$.
Now, since $\norm{\cdot}^2_{\bVV} \leq \gamma_{\max}\pp_{\max}^2 \norm{\cdot}^2_{\bWW}$
we have
\begin{align}
\label{eq:20210621a}
\EE[\uuu_k] &\leq \frac{\tau L_{\mathrm{res}}\gamma_{\max}\pp_{\max}^2}{2 \sqrt{\pp_{\min}}}
\sum_{h = k-\tau}^{k-1}\EE[\norm{\bx^{h+1} - \bx^{h}}^2_{\bWW}] \nonumber \\
&= \frac{\tau L_{\mathrm{res}}\gamma_{\max}\pp_{\max}^2}{2 \sqrt{\pp_{\min}}}
\sum_{h = k-\tau}^{k-1}\EE[\norm{\bbar{\bx}^{h+1} - \bx^{h}}^2_{\bGammas^{-1}}],
\end{align}
where in the last equality we used Lemma~\ref{p:20190313c}.
From \eqref{eq:20210106a}, we have, for $k$ such that $k-\tau \geq 0$,
\begin{align}
\label{eq:20210121a}
\sum_{h = k-\tau}^{k-1}\EE[\norm{\bbar{\bx}^{h+1} - \bx^{h}}^2_{\bGammas^{-1}}] 
&\leq 
\frac{2}{(2-\delta)\pp_{\min}}
\sum_{h = k-\tau}^{k-1} \EE\big[F(\bx^h)+\uuu_h\big] 
- \EE\big[F(\bx^{h+1})+\uuu_{h+1}\big] \nonumber \\
&= 
\frac{2}{(2-\delta)\pp_{\min}}
\left( \EE\big[F(\bx^{k - \tau})+\uuu_{k - \tau}\big] -  \EE\big[F(\bx^{k})+\uuu_{k}\big]\right) \nonumber\\
&\leq 
\frac{2}{(2-\delta)\pp_{\min}}
\left( \EE\big[F(\bx^{k - \tau})+\uuu_{k-\tau}\big] 
- \EE\big[F(\bx^{k+1})+\uuu_{k+1}\big]\right)\nonumber \\
&= 
\frac{2}{(2-\delta)\pp_{\min}}
\left( \EE\big[F(\bx^{k - \tau})+\uuu_{k-\tau}-F^*\big] 
-  \EE\big[F(\bx^{k+1})+\uuu_{k+1} - F^*\big]\right).
\end{align}
Because the sequence $\left(\EE\big[F(\bx^{k})+\uuu_{k}\big]\right)_{k \in \N}$ is decreasing, 
the transition from the second line to the third one is allowed. Using \eqref{eq:20210621a} and
\eqref{eq:20210121a}   in \eqref{eq:20210119d} with total expectation, we obtain
\begin{align}
\label{eq:20210119b}
(\kappa+\theta)\EE\big[F(\bx^{k+1})+\uuu_{k+1}-F^*\big] &\leq (\kappa - \pp_{\min })
\EE\big[F(\bx^{k})+\uuu_{k}-F^*\big] \nonumber \\
&\qquad + \theta \EE\big[F(\bx^{k - \tau})+\uuu_{k-\tau}-F^*\big] \nonumber \\
&\leq (\kappa - \pp_{\min }) \EE\big[F(\bx^{k-\tau})+\uuu_{k-\tau}-F^*\big] \nonumber \\
&\qquad + \theta \EE\big[F(\bx^{k - \tau})+\uuu_{k-\tau}-F^*\big] \nonumber \\
&=  (\kappa + \theta - \pp_{\min }) \EE\big[F(\bx^{k-\tau})+\uuu_{k-\tau}-F^*\big],
\end{align}
where $\displaystyle \theta = (2-\delta)^{-1} \left(\frac{\tau L_{\mathrm{res}}\gamma_{\max}\pp_{\max}^2}{\sqrt{\pp_{\min}}}+ \tau L_{\mathrm{res}}\gamma_{\max} \right) 
= \tau L_{\mathrm{res}}\gamma_{\max}(2-\delta)^{-1} \left(\frac{\pp^2_{\max}}{\sqrt{\pp_{\min}}} 
+ 1 \right)$. 
That means
\begin{align}\label{eq:20210526a}
\EE\big[F(\bx^{k+1})+\uuu_{k+1}-F^*\big] &\leq \left( 1 - \frac{\pp_{\min }}{\kappa+\theta}\right)
\EE\big[F(\bx^{k-\tau})+\uuu_{k-\tau}-F^*\big] \nonumber \\
&\leq \left( 1 - \frac{\pp_{\min }}{\kappa+\theta}\right)^{\lfloor \frac{k+1}{\tau + 1} \rfloor} 
\EE\big[F(\bx^{0})+\uuu_{0}-F^*\big].
\end{align}
Now for $k < \tau$, $\lfloor \frac{k+1}{\tau + 1} \rfloor = 0$. Because $\left(\EE\big[F(\bx^{k})
+\uuu_{k}\big]\right)_{k \in \N}$ is decreasing, we know that 
\begin{align*}
\EE\big[F(\bx^{k+1})+\uuu_{k+1}-F^*\big] &\leq \EE\big[F(\bx^{0})+\uuu_{0}-F^*\big] \\
&= \left( 1 - \frac{\pp_{\min }}{\kappa+\theta}\right)^{\lfloor \frac{k+1}{\tau + 1} \rfloor} 
\EE\big[F(\bx^{0})+\uuu_{0}-F^*\big].
\end{align*}
So \eqref{eq:20210526a} remains true.
Also from \eqref{eq:20211209a}, we have
\begin{equation*}
    \theta \leq  \frac{\sqrt{\pp_{\min}}}{\pp_{\max}}(2-\delta)^{-1} \left(\frac{\pp^2_{\max}}{\sqrt{\pp_{\min}}} 
+ 1 \right).
\end{equation*}

\ref{thm:scnd_ii}: From Jensen inequality, \eqref{eq:20210106a} and \eqref{eq:20210526a}, we have
\begin{align}
    \EE\big[\norm{\bx^{k+1} - \bx^k}_{\bGammas^{-1}}\big] &\leq \sqrt{\EE\big[\norm{\bx^{k+1} - \bx^k}^2_{\bGammas^{-1}}\big]} \nonumber \\
    & \leq \sqrt{\EE\big[\norm{\bbar{\bx}^{k+1} - \bx^k}^2_{\bGammas^{-1}}\big]} \nonumber \\
     &\leq 
      \sqrt{\frac{2}{\pp_{\min}(2-\delta)} \EE\big[ F(\bx^{k})+ \uuu_{k} - F^*\big]} \nonumber\\
      &\label{eq:20220111a} \leq \sqrt{\frac{2}{\pp_{\min}(2-\delta)}\left( 1 - \frac{\pp_{\min }}{\kappa+\theta}\right)^{\lfloor \frac{k}{\tau + 1} \rfloor} 
\EE\big[F(\bx^{0})+\uuu_{0}-F^*\big]}.
\end{align}
Since $ 1 - \pp_{\min }/(\kappa+\theta) < 1$,
\begin{equation*}
    \EE\bigg[\sum_{k \in \N} \norm{\bx^{k+1} - \bx^k}_{\bGammas^{-1}}\bigg] = \sum_{k \in \N} \EE\big[\norm{\bx^{k+1} - \bx^k}_{\bGammas^{-1}}\big] < \infty.
\end{equation*}
Therefore $\sum_{k \in \N} \norm{\bx^{k+1} - \bx^k}_{\bGammas^{-1}} < \infty$ $\PP$-a.s. This means the sequence $(\bx^k)_{k\in \mathbb{N}}$ is a Cauchy sequence $\PP$-a.s.~By Theorem \ref{thm:main1} \ref{thm:main1_i}, this sequence has accumulation points that take values in $\argmin F$. So it converges strongly $\PP$-a.s.~to a random variable that takes values in $\argmin F$.

Now let $\rho =  1 - \pp_{\min }/(\kappa+\theta)$. For all $n \in \N$,
\begin{equation*}
    \norm{\bx^{k+n} - \bx^{k}}_{\bGammas^{-1}} \leq \sum_{i=0}^{n-1} \norm{\bx^{k+i+1} - \bx^{k+i}}_{\bGammas^{-1}} \leq \sum_{i=0}^{\infty} \norm{\bx^{k+i+1} - \bx^{k+i}}_{\bGammas^{-1}}.
\end{equation*}
Letting $n \rightarrow \infty$ and using \eqref{eq:20220111a}, we get
\begin{align*}
    \EE\big[\norm{\bx^{k} - \bx^*}_{\bGammas^{-1}}\big] &\leq \left(\frac{2}{\pp_{\min}(2-\delta)} 
\EE\big[F(\bx^{0})+\uuu_{0}-F^*\big]\right)^{1/2} \sum_{i=0}^{\infty} \rho^{\lfloor \frac{k+i}{\tau + 1} \rfloor/2} \\
    &\leq \left(\frac{2}{\pp_{\min}(2-\delta)} 
\EE\big[F(\bx^{0})+\uuu_{0}-F^*\big]\right)^{1/2} \rho^{\lfloor \frac{k}{\tau + 1} \rfloor/2} \sum_{i=0}^{\infty} \rho^{\lfloor \frac{i}{\tau + 1}\rfloor/2} \\
    &= \rho^{\lfloor \frac{k}{\tau + 1} \rfloor/2}\left(\frac{2}{\pp_{\min}(2-\delta)} 
\EE\big[F(\bx^{0})+\uuu_{0}-F^*\big]\right)^{1/2} \frac{\tau  + 1 }{1 - \rho^{1/2}}.
\qedhere
\end{align*}
\end{proof}

\newpage
\begin{remark}\
\begin{enumerate}[label={\rm (\roman*)}]
\item A linear convergence rate is also given in \cite[Theorem 4.1]{liu2015asynchronous} by assuming a quadratic growth condition instead of the error bound condition \eqref{eb}. Their rate depend on the stepsize
which in general can be very small, as explained earlier in point \ref{item:20211216a} of Remark \ref{rmk:20211216a}.
\item The error bound condition \eqref{eb} is sometimes satisfied globally, meaning 
on $\bXX = \dom F$, so that the condition $\bXX \supset \{\bx^k\,\vert\,k \in \N\}$ $\PP$-a.s.~required in Theorem~\ref{thm:main3} is clearly fulfilled. This is the case when
 $F$ is strongly convex or when $f$ is quadratic and $g$ is the indicator function of a polytope (see Remark~4.17(iv) in \cite{salzo2021parallel}).
 More often, for general convex objectives, 
 the error bound condition \eqref{eb} is satisfied on sublevel sets of $F$ (see \cite[Remark~4.18]{salzo2021parallel}).
 Therefore, it is important to find conditions ensuring that the sequence $(\bx^k)_{k \in \N}$
 remains in a sublevel set.
 The next results address this issue.
\end{enumerate}
\end{remark}

We first give an analogue of Lemma~\ref{lem:20210207}.

\begin{lemma}\label{lem:20210623a}
Let $(\bx^k)_{k \in \mathbb{N}}$ be the sequence generated by Algorithm \ref{algoAsymain}. Then, for every $k \in \N$,
\begin{equation*}
    \langle\nabla f(\bx^k)  - \nabla f(\bhat{\bx}^k), 
    {x}^{k+1} - x^k\rangle
    \leq \tau L_{\mathrm{res}}  \|{\bx}^{k+1} - \bx^k\|^2 + \tilde{\uuu}_k - \tilde{\uuu}_{k+1},
\end{equation*}
with $\tilde{\uuu}_k = (L_{\mathrm{res}}/2)\sum_{h = k-\tau}^{k-1} (h-(k-\tau)+1)\|\bx^{h+1} - \bx^{h}\|^2$.
\end{lemma}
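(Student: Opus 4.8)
The plan is to mimic the argument for Lemma~\ref{lem:20210207}, but now pairing the gradient error with the single active-block displacement $\bx^{k+1}-\bx^k$ (which has only one nonzero component, that of index $i_k$) and measured in the plain norm rather than in $\norm{\cdot}_{\bVV}$, so that no probability weights appear. First I would apply Cauchy–Schwarz followed by Young's inequality: for any $\beta>0$,
\begin{equation*}
\scalarp{\nabla f(\bx^k)-\nabla f(\bhat\bx^k),\ \bx^{k+1}-\bx^k}
\leq \frac{\beta}{2}\norm{\nabla f(\bx^k)-\nabla f(\bhat\bx^k)}^2
+\frac{1}{2\beta}\norm{\bx^{k+1}-\bx^k}^2 .
\end{equation*}
Then I would bound the gradient-error term using Lemma~\ref{p:20170921n}, namely $\norm{\nabla f(\bx^k)-\nabla f(\bhat\bx^k)}\le L_{\mathrm{res}}\sum_{h\in J(k)}\norm{\bx^{h+1}-\bx^h}$, and invoke Cauchy–Schwarz on the sum over $J(k)$ together with $\abs{J(k)}\le\tau$ to get
$\norm{\nabla f(\bx^k)-\nabla f(\bhat\bx^k)}^2\le \tau L_{\mathrm{res}}^2\sum_{h\in J(k)}\norm{\bx^{h+1}-\bx^h}^2
\le \tau L_{\mathrm{res}}^2\sum_{h=k-\tau}^{k-1}\norm{\bx^{h+1}-\bx^h}^2$.

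The key is the choice $\beta = 1/L_{\mathrm{res}}$, which makes the coefficient of $\norm{\bx^{k+1}-\bx^k}^2$ equal to $L_{\mathrm{res}}/2$ and the coefficient in front of the delayed sum equal to $\tau L_{\mathrm{res}}/2$; combining,
\begin{equation*}
\scalarp{\nabla f(\bx^k)-\nabla f(\bhat\bx^k),\ \bx^{k+1}-\bx^k}
\leq \frac{L_{\mathrm{res}}}{2}\norm{\bx^{k+1}-\bx^k}^2
+\frac{\tau L_{\mathrm{res}}}{2}\sum_{h=k-\tau}^{k-1}\norm{\bx^{h+1}-\bx^h}^2 .
\end{equation*}
The remaining step is purely algebraic: rewrite the trailing double sum as a telescoping difference $\tilde\uuu_k-\tilde\uuu_{k+1}$ plus a single leftover term, exactly as in Fact~\ref{fact:decomp} with $a_h=\norm{\bx^{h+1}-\bx^h}^2$, $n=k-\tau$, and the upper index $k$. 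With $\tilde\uuu_k=(L_{\mathrm{res}}/2)\sum_{h=k-\tau}^{k-1}(h-(k-\tau)+1)\norm{\bx^{h+1}-\bx^h}^2$ one checks that $\frac{\tau L_{\mathrm{res}}}{2}\sum_{h=k-\tau}^{k-1}\norm{\bx^{h+1}-\bx^h}^2 = \tilde\uuu_k-\tilde\uuu_{k+1}+\frac{L_{\mathrm{res}}}{2}\norm{\bx^{k+1}-\bx^k}^2$; absorbing the extra $\frac{L_{\mathrm{res}}}{2}\norm{\bx^{k+1}-\bx^k}^2$ together with the earlier one into a single $\tau L_{\mathrm{res}}\norm{\bx^{k+1}-\bx^k}^2$ (using $1\le\tau$ when $\tau\ge1$, and noting the inequality is trivial when $\tau=0$) yields the claimed bound.

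I do not expect a genuine obstacle here; the only point requiring mild care is the bookkeeping of which displacement indices land in $J(k)$ versus the full window $\{k-\tau,\dots,k-1\}$ — since $J(k)\subset\{k-\tau,\dots,k-1\}$ and all summands are nonnegative, passing to the full window is harmless — and making sure the telescoping identity of Fact~\ref{fact:decomp} is applied with the right endpoints so that the leftover term is exactly $(k-(k-\tau))a_k\cdot\frac{L_{\mathrm{res}}}{2}$-type, which combines cleanly; one should double check the coefficient matches $\tau L_{\mathrm{res}}$ after absorption, possibly tightening or loosening constants as the downstream use in \eqref{eq:20210119d}-style estimates permits.
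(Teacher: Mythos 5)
Your overall strategy is exactly the paper's: Cauchy--Schwarz, then Young's inequality with a free parameter, then Cauchy--Schwarz on the delayed sum with $\abs{J(k)}\le\tau$, then the telescoping decomposition of Fact~\ref{fact:decomp}. However, the step you single out as ``the key'' is where the proof breaks: the choice $\beta=1/L_{\mathrm{res}}$ is wrong, and the telescoping identity you then assert, namely
\begin{equation*}
\frac{\tau L_{\mathrm{res}}}{2}\sum_{h=k-\tau}^{k-1}\norm{\bx^{h+1}-\bx^h}^2
=\tilde\uuu_k-\tilde\uuu_{k+1}+\frac{L_{\mathrm{res}}}{2}\norm{\bx^{k+1}-\bx^k}^2,
\end{equation*}
is false for $\tau\ge 2$. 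With $\tilde\uuu_k=(L_{\mathrm{res}}/2)\sum_{h=k-\tau}^{k-1}(h-(k-\tau)+1)\norm{\bx^{h+1}-\bx^h}^2$, Fact~\ref{fact:decomp} (with $a_h=\norm{\bx^{h+1}-\bx^h}^2$, $n=k-\tau$) gives
\begin{equation*}
\frac{L_{\mathrm{res}}}{2}\sum_{h=k-\tau}^{k-1}a_h=\tilde\uuu_k-\tilde\uuu_{k+1}+\frac{\tau L_{\mathrm{res}}}{2}\,a_k,
\end{equation*}
i.e.\ the sum that telescopes into $\tilde\uuu_k-\tilde\uuu_{k+1}$ must carry the coefficient $L_{\mathrm{res}}/2$, not $\tau L_{\mathrm{res}}/2$, and the leftover is $\tfrac{\tau L_{\mathrm{res}}}{2}a_k$, not $\tfrac{L_{\mathrm{res}}}{2}a_k$. (Check $\tau=2$: your right-hand side equals $\tfrac{L_{\mathrm{res}}}{2}(a_{k-2}+a_{k-1})-\tfrac{L_{\mathrm{res}}}{2}a_k$, while your left-hand side is $L_{\mathrm{res}}(a_{k-2}+a_{k-1})$.) If you insist on $\beta=1/L_{\mathrm{res}}$, the delayed sum enters with weight $\tau L_{\mathrm{res}}/2$ and the correct decomposition yields $\tau(\tilde\uuu_k-\tilde\uuu_{k+1})+\tfrac{L_{\mathrm{res}}(1+\tau^2)}{2}\norm{\bx^{k+1}-\bx^k}^2$, which is strictly weaker than the claimed bound (since $1+\tau^2\ge 2\tau$) and has the wrong telescoping term, so the lemma as stated does not follow.

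The fix is one line: take $\beta=1/(\tau L_{\mathrm{res}})$ (this is precisely the minimizer of $s\mapsto s/2+\tau^2L_{\mathrm{res}}^2/(2s)$ with $s=1/\beta$, which is how the paper arrives at it). Then Young's inequality contributes $\tfrac{\tau L_{\mathrm{res}}}{2}\norm{\bx^{k+1}-\bx^k}^2$ and puts the coefficient $L_{\mathrm{res}}/2$ on the delayed sum; the displayed identity above converts that sum into $\tilde\uuu_k-\tilde\uuu_{k+1}+\tfrac{\tau L_{\mathrm{res}}}{2}\norm{\bx^{k+1}-\bx^k}^2$, and the two halves of $\tau L_{\mathrm{res}}$ combine to give exactly the stated bound. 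Everything else in your plan (passing from $J(k)$ to the full window $\{k-\tau,\dots,k-1\}$ by nonnegativity, the trivial case $\tau=0$) is fine.
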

\begin{proof}
Let $k \in \N$.
We have, from Cauchy-Schwarz inequality, the
Young inequality and Lemma~\ref{eq:20170925a}, that
\begin{align*}
\langle\nabla f(\bx^k)&-\nabla f(\bhat{\bx}^k),{\bx}^{k+1}-\bx^k \rangle \\ 
&\leq L_{\mathrm{res}} \sum_{h \in J(k)} \|\bx^{h+1} - \bx^{h}\| \|{\bx}^{k+1} - \bx^k\| \\
&\leq \frac{1}{2}\left[\frac{L_{\mathrm{res}}^2}{s}\bigg(\sum_{h \in J(k)}
\|\bx^{h+1} - \bx^{h}\|\bigg)^2 
+s\| {\bx}^{k+1} - \bx^k\|^2\right] \\
&\leq \frac{1}{2}\left[\frac{\tau L_{\mathrm{res}}^2}{s}\left(\sum_{h = k-\tau}^{k-1}
\|\bx^{h+1} - \bx^{h}\|^2\right)
+s\| {\bx}^{k+1} - \bx^k\|^2\right] \\
&= \frac{s}{2}\|{\bx}^{k+1} - \bx^k\|^2 
+ \frac{\tau L_{\mathrm{res}}^2}{2s}\sum_{h = k-\tau}^{k-1} 
\|\bx^{h+1} - \bx^{h}\|^2.
\end{align*}
Using the same decomposition of the last term as in Lemma \ref{lem:20210207} , we get 
\begin{align*}
\langle\nabla f(\bx^k)& - \nabla f(\bhat{\bx}^k), {\bx}^{k+1} - \bx^k\rangle\\
&\leq \frac{s}{2}\|{\bx}^{k+1} - \bx^k\|^2 
+ \frac{\tau L_{\mathrm{res}}^2}{2s} \sum_{h = k-\tau}^{k-1} 
(h- (k-\tau)+1)\|\bx^{h+1} - \bx^{h}\|^2 \\
&\qquad - \frac{\tau L_{\mathrm{res}}^2}{2s} \sum_{h=k-\tau+1}^{k}
(h-(k-\tau))\|\bx^{h+1} - \bx^{h}\|^2 \\
&\qquad + \frac{\tau^2 L_{\mathrm{res}}^2}{2s}
\|\bx^{k+1} - \bx^{k}\|^2.
\end{align*}
So taking 
$$\displaystyle \tilde{\uuu}_k = \frac{\tau L_{\mathrm{res}}^2}{2s}\sum_{h = k-\tau}^{k-1} (h-(k-\tau)+1)\|\bx^{h+1} - \bx^{h}\|^2,$$ 
we get
\begin{equation*}
    \langle\nabla f(\bx^k)  - \nabla f(\bhat{\bx}^k), 
    \bar{x}^{k+1} - x^k\rangle 
    \leq \left(\frac{s}{2} + \frac{\tau^2 L_{\mathrm{res}}^2}{2s} \right) \|\bar{\bx}^{k+1} - \bx^k\|^2 + \tilde{\uuu}_k - \tilde{\uuu}_{k+1}.
\end{equation*}
By minimizing $s \mapsto (s/2 + \tau^2 L_{\mathrm{res}}^2/(2s))$, we find $s=\tau L_{\mathrm{res}}$. We then obtain
\begin{equation*}
    \langle\nabla f(\bx^k)  - \nabla f(\bhat{\bx}^k), 
    {x}^{k+1} - x^k\rangle 
    \leq \tau L_{\mathrm{res}}  \|{\bx}^{k+1} - \bx^k\|^2 + \tilde{\uuu}_k - \tilde{\uuu}_{k+1},
\end{equation*}
and the statement follows.
\end{proof}

\begin{proposition}\label{prop:20210623a}
Let $(\bx^k)_{k \in \mathbb{N}}$ be the sequence generated by Algorithm \ref{algoAsymain}. Then, for every $k \in \N$,
\begin{equation}
\label{eq:20210626a}
\bigg(\frac{1}{\gamma_{i_{k}}}-\frac{L_{i_{k}}}{2} 
-\tau L_{\mathrm{res}}\bigg)\|\bx^{k+1} - \bx^k\|^2 
\leq F(\bx^k)+ \tilde{\uuu}_k - \big(F(\bx^{k+1}) +\tilde{\uuu}_{k+1} \big)
 \qquad \PP\text{-a.s.},
\end{equation}
where  
$\tilde{\uuu}_k = (L_{\mathrm{res}}/2)\sum_{h = k-\tau}^{k-1} (h-(k-\tau)+1)\|\bx^{h+1} - \bx^{h}\|^2$.
\end{proposition}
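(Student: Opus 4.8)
The plan is to carry out the standard forward--backward sufficient-decrease computation, but restricted to the single block $i_k$ that is actually updated at iteration $k$, and then to absorb the error caused by the delayed gradient $\nabla f(\bhat{\bx}^k)$ by invoking Lemma~\ref{lem:20210623a}. Fix a realization (so that, notwithstanding the $\PP$-a.s.\ in the statement, the estimate is really path-wise). Since $\bx^{k+1}$ and $\bx^k$ coincide outside the block $i_k$, we have $|x^{k+1}_{i_k}-x^k_{i_k}|^2 = \|\bx^{k+1}-\bx^k\|^2$, and the Descent Lemma in the form \eqref{eq:20170921m} (with $\bxs=\bx^k$, $\bxs'=\bx^{k+1}$, $i=i_k$) gives
\[
f(\bx^{k+1}) \le f(\bx^k) + \langle \nabla_{i_k} f(\bx^k),\, x^{k+1}_{i_k}-x^k_{i_k}\rangle + \frac{L_{i_k}}{2}\,\|\bx^{k+1}-\bx^k\|^2 .
\]

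For the nonsmooth part I would use the inclusion $\gamma_{i_k}^{-1}(x^k_{i_k}-x^{k+1}_{i_k}) - \nabla_{i_k} f(\bhat{\bx}^k) \in \partial g_{i_k}(x^{k+1}_{i_k})$ from \eqref{eq:20170921l}, the convexity of $g_{i_k}$, and the identity $g(\bx^{k+1})-g(\bx^k)=g_{i_k}(x^{k+1}_{i_k})-g_{i_k}(x^k_{i_k})$ (again because only block $i_k$ changes) to obtain
\[
g(\bx^{k+1}) - g(\bx^k) \le -\frac{1}{\gamma_{i_k}}\|\bx^{k+1}-\bx^k\|^2 - \langle \nabla_{i_k} f(\bhat{\bx}^k),\, x^{k+1}_{i_k}-x^k_{i_k}\rangle .
\]
Adding the two displays and grouping the two gradient terms and the two quadratic terms yields
\[
F(\bx^{k+1}) - F(\bx^k) \le \langle \nabla_{i_k} f(\bx^k) - \nabla_{i_k} f(\bhat{\bx}^k),\, x^{k+1}_{i_k}-x^k_{i_k}\rangle + \Big(\frac{L_{i_k}}{2}-\frac{1}{\gamma_{i_k}}\Big)\|\bx^{k+1}-\bx^k\|^2 .
\]

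Finally, because $\bx^{k+1}-\bx^k$ is supported on block $i_k$, the inner product above equals $\langle \nabla f(\bx^k)-\nabla f(\bhat{\bx}^k),\, x^{k+1}-x^k\rangle$, which Lemma~\ref{lem:20210623a} bounds by $\tau L_{\mathrm{res}}\|\bx^{k+1}-\bx^k\|^2 + \tilde{\uuu}_k-\tilde{\uuu}_{k+1}$. Substituting this bound and moving all the $\|\bx^{k+1}-\bx^k\|^2$ terms to the left-hand side gives exactly \eqref{eq:20210626a}, and since the computation is valid for every realization of $(i_k)_{k\in\N}$ it holds in particular $\PP$-a.s. The only delicate point is the bookkeeping with the delayed-gradient cross term, but that is precisely what Lemma~\ref{lem:20210623a} is designed to handle, so once that lemma is in place the rest is the routine single-block forward--backward estimate.
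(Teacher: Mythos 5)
Your proposal is correct and follows essentially the same route as the paper: the paper's proof simply cites the intermediate inequality \eqref{eq:20210623a} (the blockwise descent lemma combined with the prox subgradient inequality \eqref{eq:20170925a} at $\bxs=\bx^k$, which is exactly what you re-derive), identifies the blockwise cross term with $\langle \nabla f(\bx^k)-\nabla f(\bhat{\bx}^k),\bx^{k+1}-\bx^k\rangle$, and then applies Lemma~\ref{lem:20210623a} exactly as you do. No gaps.
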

\begin{proof}
Using Lemma \ref{lem:20210623a} in equation \eqref{eq:20210623a}, we have
\begin{align*}
F(\bx^{k+1})&\leq F(\bx^k) + \langle \nabla_{i_{k}} f(\bx^k) - \nabla_{i_{k}} f(\bhat{\bx}^k), \bar{x}^{k+1}_{i_{k}} - x^k_{i_{k}} \rangle - \bigg(\frac{1}{\gamma_{i_{k}}}-\frac{L_{i_{k}}}{2}\bigg)|\bar{x}^{k+1}_{i_{k}} - x^k_{i_{k}}|^2 \\
&= F(\bx^k) + \langle \nabla f(\bx^k) - \nabla f(\bhat{\bx}^k), \bx^{k+1} - \bx^k \rangle 
- \bigg(\frac{1}{\gamma_{i_{k}}}-\frac{L_{i_{k}}}{2}\bigg)\|\bx^{k+1} - \bx^k\|^2 \\
&\leq F(\bx^k) + \tilde{\uuu}_k - \tilde{\uuu}_{k+1}
- \bigg(\frac{1}{\gamma_{i_{k}}}-\frac{L_{i_{k}}}{2} - \tau L_{\mathrm{res}}\bigg)\|\bx^{k+1} - \bx^k\|^2.
\end{align*}
So the statement follows.
\end{proof}

\begin{corollary}\label{cor:20210907a}
Let $(\bx^k)_{k \in \N}$ be generated by Algorithm~\ref{algoAsymain}
with the $\gamma_i$'s satisfying the following stepsize rule
\begin{equation}
\label{eq:stepsize2}
(\forall\, i \in [m])\quad
\gamma_i < \frac{2}{L_{i} + 2\tau L_{\mathrm{res}}}.
\end{equation}
Then
\begin{equation}
(\forall\, k \in \N)\quad    F(\bx^k) \leq F(\bxx^0) \quad \PP\text{-a.s.}
\end{equation}
So if the error bound condition \eqref{eb} holds on the sublevel set $\bXX = \{F \leq F(\bxx^0)\}$, then 
the assumptions of Theorem~\ref{thm:main3} are met.
\end{corollary}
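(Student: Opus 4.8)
The plan is to read the statement off Proposition~\ref{prop:20210623a}, which holds for \emph{every} choice of positive stepsizes. The key observation is that the stepsize rule \eqref{eq:stepsize2} is exactly the positivity of the coefficient multiplying $\|\bx^{k+1}-\bx^k\|^2$ on the left-hand side of \eqref{eq:20210626a}: indeed, $\gamma_i<2/(L_i+2\tau L_{\mathrm{res}})$ is equivalent to $\tfrac{1}{\gamma_i}-\tfrac{L_i}{2}-\tau L_{\mathrm{res}}>0$, and this holds for every $i\in[m]$, hence for every realization of $i_k$. So I would simply drop the nonnegative term $\big(\tfrac{1}{\gamma_{i_k}}-\tfrac{L_{i_k}}{2}-\tau L_{\mathrm{res}}\big)\|\bx^{k+1}-\bx^k\|^2$ from \eqref{eq:20210626a} to obtain, $\PP$-a.s.,
\[
F(\bx^{k+1})+\tilde{\uuu}_{k+1}\ \le\ F(\bx^{k})+\tilde{\uuu}_{k}\qquad(\forall\,k\in\N),
\]
i.e.\ $(F(\bx^{k})+\tilde{\uuu}_{k})_{k\in\N}$ is nonincreasing $\PP$-a.s., whence $F(\bx^{k})+\tilde{\uuu}_{k}\le F(\bx^{0})+\tilde{\uuu}_{0}$ for every $k$.

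Next I would evaluate the boundary term $\tilde{\uuu}_0=(L_{\mathrm{res}}/2)\sum_{h=-\tau}^{-1}(h+\tau+1)\|\bx^{h+1}-\bx^{h}\|^2$. By the extension convention $\bx^{h}=\bx^{0}$ for $h\in\{-\tau,\dots,-1\}$ fixed in Section~\ref{sec:preliminaries}, every increment $\bx^{h+1}-\bx^{h}$ with $h\in\{-\tau,\dots,-1\}$ vanishes, so $\tilde{\uuu}_0=0$. Since also $\tilde{\uuu}_k\ge0$ for all $k$, we get $F(\bx^{k})\le F(\bx^{k})+\tilde{\uuu}_{k}\le F(\bx^{0})$ $\PP$-a.s., which is the first assertion.

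For the transfer to Theorem~\ref{thm:main3}: the bound just proved says precisely that $\{\bx^{k}\,\vert\,k\in\N\}\subset\bXX=\{F\le F(\bx^{0})\}$ $\PP$-a.s., so if the error bound \eqref{eb} is assumed on this sublevel set, then the error-bound hypothesis of Theorem~\ref{thm:main3} holds with $\bXX\supset\{\bx^{k}\,\vert\,k\in\N\}$ $\PP$-a.s.; together with $\delta<2$ this gives all hypotheses of Theorem~\ref{thm:main3}. I expect no serious obstacle in this corollary: the substantive content is the one-line telescoping above, and the only points deserving care are (i) identifying \eqref{eq:stepsize2} with the positivity of the ``descent coefficient'' in \eqref{eq:20210626a}, and (ii) using the extension convention so that $\tilde{\uuu}_0=0$ and the telescoped bound reads $F(\bx^k)\le F(\bx^0)$ with no residual term. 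The mildly delicate bookkeeping point is that $\delta<2$, required by Theorem~\ref{thm:main3}, is not implied by \eqref{eq:stepsize2} for an arbitrary probability vector — it must be carried along as the standing stepsize assumption $\gamma_i(L_i+2\tau L_{\mathrm{res}}\pp_{\max}/\sqrt{\pp_{\min}})<2$ — although \eqref{eq:stepsize2} alone does imply it when the selection is uniform, or more generally when $\pp_{\max}^2\le\pp_{\min}$.
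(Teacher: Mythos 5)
Your proposal is correct and follows essentially the same route as the paper: the paper likewise observes that under \eqref{eq:stepsize2} the left-hand side of \eqref{eq:20210626a} is nonnegative, so $(F(\bx^k)+\tilde{\uuu}_k)_{k\in\N}$ is nonincreasing $\PP$-a.s., and then telescopes using $\tilde{\uuu}_0=0$ (from the convention $\bx^h=\bx^0$ for $h<0$) and $\tilde{\uuu}_k\ge 0$ to get $F(\bx^k)\le F(\bx^0)$. Your closing remark that $\delta<2$ must still be carried as the standing stepsize assumption, since \eqref{eq:stepsize2} alone does not imply it for arbitrary block probabilities, is a worthwhile point of care that the paper leaves implicit.
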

\begin{proof}
The left hand side in \eqref{eq:20210626a} is positive and hence
$(F(\bx_k) + \tilde{\uuu}_k)_{k \in \N}$ is decreasing $\PP$-a.s. 
Therefore, we have, for every $k \in \N$ 
\begin{equation*}
F(\bx^{k}) \leq F(\bx^{k}) +\tilde{\uuu}_{k} \leq F(\bx^0) + \tilde{\uuu}_0 = F(\bxx^0).
\qedhere
\end{equation*}
\end{proof}

\begin{remark}
 The rule \eqref{eq:stepsize2} yields stepsizes possibly smaller
than the ones given in Theorem~\ref{thm:main1}, which requires 
$\gamma_i< 2/(L_{i} + 2\tau L_{\mathrm{res}}\pp_{\max}/\sqrt{\pp_{\min}})$. 
Indeed this happens when $\pp_{\max}/\sqrt{\pp_{\min}} < 1$. For instance if the
distribution is uniform, we have $\pp_{\max}/\sqrt{\pp_{\min}} 
= 1/\sqrt{m} < 1$ whenever $m \geq 2$. On the bright side, there may 
exist distributions for which $\pp_{\max}/\sqrt{\pp_{\min}} > 1$.
\end{remark}

\section{Applications}
\label{sec:applications}
Here we present two problems where Algorithm \ref{algoAsymain} can be useful.

\subsection{The Lasso problem}\label{sec:lasso}
We start with the Lasso problem \cite{tibshirani1996regression}, also known as basis pursuit
\cite{chen2001atomic}. It is a least-squares regression problem with an $\ell_1$ regularizer 
which favors sparse solutions. More precisely, given $\As \in \R^{n \times m}$ and 
$\bbs \in \R^n$, one aims at solving the following problem
\begin{equation}\label{app:lasso}
\underset{\bxx \in \R^m}{\text{minimize }} \frac{1}{2}\|\As\bxx - \bbs\|_2^2 
+ \lambda \|\bxx\|_1 \qquad \left(\lambda > 0\right).
\end{equation}
We clearly fall in the framework of problem \eqref{mainprob} with 
$f(\bxx) = (1/2)\norm{\As\bxx - \bbs}_2^2$ and $g_i(\xx_i) = \lambda|\xx_i|$. The assumptions
\ref{eq:A1}, \ref{eq:A2}, \ref{eq:A3} and \ref{eq:A4} are also satisfied. In particular, here 
$L_i = \norm{a_i}^2$, where $a_i$ is the $i$-th column of $\As$, $L_{\mathrm{res}} = \max_{i}\|\As^{\intercal}\As_{\cdot i}\|_{2}$, with $\As^{\intercal}\As_{\cdot i}$ the $i$-th column of $\As^{\intercal}\As$,
and $F = f + g$ attains its minimum.

The Lasso technique is used in many fields, especially for high-dimensional problems -- 
among others it is worth mentioning statistics, signal processing, and inverse problems; see \cite{beck2009fast, tropp2006just, kim2007interior, 10.1214/14-AOS1204, 1614066, sun2013sparse} and references therein. 
Since there is no closed form solution for this problem, many iterative algorithms have 
been proposed to solve it: forward-backward, accelerated (proximal) gradient descent, (proximal)
block coordinate descent, etc. \cite{combettes2005signal,beck2009fast, nesterov2013gradient,fu1998penalized, tseng2001convergence, friedman2010regularization}.  In the same vein, 
applying Algorithm \ref{algoAsymain} to the Lasso problem \eqref{app:lasso} yields the 
iterative scheme:

\vspace{-2.5ex}
\begin{equation}
\label{eq:algoPRCD_lasso}
\begin{array}{l}
\text{for}\;n=0,1,\ldots\\
\left\lfloor
\begin{array}{l}
\text{for}\;i=1,\dots, m\\[0.7ex]
\left\lfloor
\begin{array}{l}
x^{k+1}_i = 
\begin{cases}
\mathsf{\mathsf{soft}}_{\lambda \gamma_{i_{k}}} \big(x^k_{i_{k}} - \gamma_{i_{k}}  a_{i_k}^\intercal( \As\bx^{k-\bds^k} - \bbs) \big) &\text{if } i=i_{k}\\
x^k_i &\text{if } i \neq i_{k},
\end{cases}
\end{array}
\right.
\end{array}
\right.
\end{array}
\end{equation}
where, for every $\rho>0$, $\mathsf{soft}_{\rho}\colon \R \to \R$ is the soft thresholding operator (with threshold $\rho$) \cite{salzo2021book}.
Thanks to Theorem \ref{thm:main1} we know that the iterates $(\bx^k)_{k \in \N}$ generated are weakly convergent and the function values have a convergence rate of $o(1/k)$.
On top of that the cost function of the Lasso problem \eqref{app:lasso} satisfies the error bound condition \eqref{eb} on its sublevel sets  \cite[Theorem 2]{tseng2010approximation}. So, following Corollary \ref{cor:20210907a} and Theorem \ref{thm:main3}, the iterates converge strongly (a.s.) and linearly in mean, whenever $\gamma_i < 2/\left(L_{i} + 2\tau L_{\mathrm{res}}\right)$, for all $i \in [m]$.
\subsection{Linear convergence of dual proximal gradient method }
We consider the problem
\begin{equation}\label{prob:20210909a}
\underset{\xx \in \HH}{\text{minimize }}\sum_{i=1}^{m} \phi_{i}\left(\As_i\xx\right) + h(\xx),
\end{equation}
where, for all $i \in [m], \As_i\colon \HH \to \GG_i$ is a linear operator between Hilbert spaces,
$\phi_{i}\colon \GG_i \to\left]-\infty,+\infty\right]$ is proper convex and lower semicontinuous, 
and $h\colon \HH \to\left]-\infty,+\infty\right]$ is proper lower semicontinuous and
$\sigma$-strongly convex $(\sigma>0)$. The first term of the objective function may
represent the empirical data loss and the second term the regularizer. This problem arises in
many applications in machine learning, signal processing and statistical estimation, and is commonly called regularized empirical risk minimization \cite{shalev-shwartz_ben-david_2014}. It includes, for instance, ridge regression and (soft margin) support vector machines \cite{shalev-shwartz_ben-david_2014}, more generally Tikhonov regularization \cite[Section 5.3]{kress1998ill}.

In the following we apply Algorithm~\ref{algoAsymain} to the dual of problem
\eqref{prob:20210909a}. Below we provide details. 
Set $\bm{\GG}=\bigoplus_{i=1}^m \GG_i$ and  $\buu = (\uu_{1}, \uu_{2}, \ldots, \uu_{m})$.
Then, the dual of problem \eqref{prob:20210909a} is
\begin{equation}\label{prob:20210909b}
\underset{\buu \in \mathbf{\GG}}{\text{minimize }} F(\buu) 
= h^*\bigg(-\sum_{i=1}^{m} \As^*_i\uu_{i}\bigg) + \sum_{i=1}^m \phi^*_i(\uu_i),
\end{equation}
where, $\As_i^*$ is the adjoint operator of $\As_i$
 $h^*$ and $\phi_i^*$ are 
the Fenchel conjugates of $h$ and $\phi_i$ respectively.
The link between the dual variable $\buu$ and the primal variable $\xx$
is given by the rule $\buu \mapsto \nabla h^*(-\sum_{i=1}^{m} \As^*_i\uu_{i})$.
Since $h^*$ is $(1/\sigma)$-Lipschitz smooth, the dual problem above 
is in the form of problem \eqref{mainprob}. Thus,
Algorithm \eqref{algoAsymain} applied to 
the dual problem \eqref{prob:20210909b} gives
\begin{equation}
\label{eq:algoPRCD5}
\begin{array}{l}
\text{for}\;k=0,1,\ldots\\
\left\lfloor
\begin{array}{l}
\text{for}\;i=1,\dots, m\\[0.7ex]
\left\lfloor
\begin{array}{l}
u^{k+1}_i = 
\begin{cases}
\prox_{\gamma_{i_{k}} \phi_{i_{k}}^*}\big(u_{i_{k}}^{k}+\gamma_{i_{k}}
\As_{i_k}\nabla  h^{*}( -
\sum_{j=1}^{m} \As^*_j u^{k - \ds^k_j}_{j})
\big) &\text{if } i=i_{k}\\
u^k_i &\text{if } i \neq i_{k},
\end{cases}
\end{array}
\right.
\end{array}
\right.
\end{array}
\end{equation}
Suppose that $\nabla  h^{*} = \Bs$ 
is a linear operator and that the delay vector  $\bds^k= (\ds_1^k,\cdots, \ds_m^k)$ is
uniform, that is, $\ds^k_i = \ds_j^k = \ds^k \in \N$. Then, using the primal variable,
the KKT condition $x^k = \nabla  h^{*}( -\sum_{j=1}^{m} \As^*_j u^{k}_{j}) =
- \sum_{j=1}^{m} \Bs \As^*_j u^{k}_{j}$, and the fact that $\bm{u}^{k+1}$ and $\bm{u}^k$
differ only on the $i_k$-component, the algorithm becomes
\begin{equation}
\label{eq:algoPRCD6}
\begin{array}{l}
\text{for}\;k=0,1,\ldots\\
\left\lfloor
\begin{array}{l}
\text{for}\;i=1,\dots, m\\[0.7ex]
\left\lfloor
\begin{array}{l}
u^{k+1}_i = 
\begin{cases}
\prox_{\gamma_{i_{k}} \phi_{i_{k}}^*}\big(u_{i_{k}}^{k}+\gamma_{i_{k}} \As_{i_k} \bm{x}^{k-\ds^k}\big) &\text{if } i=i_{k}\\
u^k_i &\text{if } i \neq i_{k}.
\end{cases}\\
\text{ }\\
\bm{x}^{k+1} = \bm{x}^k - \Bs \As^*_{i_k} (u^{k+1}_{i_k} - u^{k}_{i_k}).
\end{array}
\right.
\end{array}
\right.
\end{array}
\end{equation}
The above algorithm requires a lock during the update of the primal variable $\xx$. On the contrary, the update of the dual variable $\buu$ is completely asynchronous without any lock as in the setting we studied in this paper. To get a better understanding of this aspect, we will expose a concrete example: the ridge regression.


\subsubsection{Example: Ridge regression}
The ridge regression is the following regularized least squares problem.
\begin{equation}
\label{prob:ridgereg}
    \underset{\ww \in \HH}{\text{minimize}}\, \frac{1}{\lambda m} \sum_{i=1}^{m} \left(\yy_{i} - \left\langle\ww, \xx_{i}\right\rangle\right)^2 +\frac{1}{2}\|\ww\|^{2}.
\end{equation}
Its dual problem is
\begin{align*}
\underset{\buu \in \R^m}{\text{minimize}}\, 
\frac{1}{2} \scalarp{(\KK+\lambda m \Id_m) \buu, \buu} -\scalarp{\byy,\buu},
\end{align*}
where $\KK = \XX\XX^{*}$ and $\XX \colon \HH \to \R^m$, with 
$\XX \ww = (\scalarp{\ww,\xx_i})_{1 \leq i\leq m}$. We remark that, in this situation,
$A_i = \scalarp{\cdot, \xx_i}$, $A_i^* = \xx_i$ and $B=\Id$. Let 
$\bds^k= (\ds^k, \ds^k, \cdots, \ds^k)$. With $\ww^k = \XX^{*}\buu^k$ and
considering that the non smooth part $g$ is null, the algorithm is given by
\begin{equation}
\label{eq:ridgereg}
\begin{array}{l}
\text{for}\;k=0,1,\ldots\\
\left\lfloor
\begin{array}{l}
\text{for}\;i=1,\dots, m\\[0.7ex]
\left\lfloor
\begin{array}{l}
u^{k+1}_i = 
\begin{cases}
u_{i_{k}}^{k} - \gamma_{i_{k}} \big(\langle \xx_{i_k}, \bm{w}^{k-\ds^k}\rangle+\lambda m u_{i_k}^{k-\ds^{k}}-\yy_{i_k}\big) &\text{if } i=i_{k}\\
u^k_i &\text{if } i \neq i_{k}.
\end{cases}\\
\text{ }\\
\bm{w}^{k+1} = \bm{w}^k - \gamma_{i_{k}} \xx_{i_k}\big(\uu^{k+1}_{i_k} - \uu^k_{i_k}\big).
\end{array}
\right.
\end{array}
\right.
\end{array}
\end{equation}
\begin{remark}
Now we will compare the above dual asynchronous algorithm to the asynchronous 
stochastic gradient descent (ASGD) \cite{niu2011hogwild, agarwal2011distributed}. 
We note that \eqref{eq:ridgereg} yields
\begin{align*}
    \bm{w}^{k+1} &= \bm{w}^k - \gamma_{i_{k}} \xx_{i_k}\big(u^{k+1}_{i_k} - u^k_{i_k}\big) \\
    &= \bm{w}^k - \gamma_{i_{k}} \big(\langle \xx_{i_k}, \bm{w}^{k-\ds^k}\rangle\xx_{i_k} +\lambda m u_{i_k}^{k-\ds^k}\xx_{i_k} -\yy_{i_k}\xx_{i_k}\big).
\end{align*}
Instead, applying asynchronous SGD to 
the primal problem \eqref{prob:ridgereg} multiply by $\lambda m$, we get
\begin{align*}
\bm{w}^{k+1} = \bm{w}^k - \gamma_{k}^{\prime} \big(\langle \xx_{i_k},
\bm{w}^{k-\ds^k}\rangle\xx_{i_k} +\lambda m \bm{w}^{k-\ds^k}-\yy_{i_k}\xx_{i_k}\big).
\end{align*}
We see that the only difference is the second term inside the parentheses in
both updates. Indeed the term $\bm{w}^{k-\ds^k} = \XX^{*}\bm{u}^{k-\ds^k} = 
\sum_{i=1}^m u_{i}^{k-\ds^k}\xx_{i}$ in ASGD is replaced by only one summand
$u_{i_k}^{k-\ds^k}\xx_{i_k}$ in our algorithm. However,
a major difference between the two approaches lies in the way the stepsize is
set. Indeed, in ASGD, the stepsize $\gamma_k^{\prime}$  is chosen with respect
to the operator norm of $\KK + \lambda m \Id$ i.e., the Lipschitz constant of
the full gradient of the primal objective function, see 
\cite[Theorem 1]{agarwal2011distributed}. By contrast, in algorithm
\eqref{eq:ridgereg}, for all $i \in [m]$, the stepsizes $\gamma_i^k$  are chosen with respect to the Lipschitz constant of the partial
derivatives of the dual objective function i.e., $\KK_{i,i} + \lambda m$. Not
only the latter are easier to compute, they also allow for possibly longer
steps along the coordinates.
\end{remark}
\section{Experiments}\label{sect:20220825a}
In this section, we will present some experiments with the purpose of assessing our theoretical findings and making comparison with related results in the literature. All the codes are available on GitHub\footnote{https://github.com/cheiktraore/Codes\_Paper\_Asc\_Coord\_Desc}.

We coded the mathematical model of asynchronicity in \eqref{eq:algoPRCD2}. At each iteration we compute the forward step using gradients that are possibly outdated. The delay vector components are a priori chosen according to a uniform distribution on $\{0,1,\ldots,\tau\}$.
The block coordinates are updated with a uniform distribution independent from the delay vector.  We considered three kinds of experiments: in the first one we did a speedup test for our algorithm on the Lasso problem. This allows to check whether the speed of convergence increases linearly with the number of machines used. Then, we considered a comparison with the synchronous version of the algorithm in order to show the advantage of the asynchronous implementation. Finally, in the third group of experiments we compared our algorithm with those by Liu et al \cite{liu2015asynchronous} and Cannelli et al \cite{cannelli2019asynchronous}.

\subsection{Speedup test} \label{sec:exp_speedup}

In this section we consider the Lasso problem \eqref{app:lasso} with $n=100$ and $m \in \{500, 1000, 2000, 8000\}$. $\lambda$ is chosen small enough so that the minimizer $\bx^*$ has some non zero components. For more flexibility,  we used synthetic data, which were generated using the function \textsc{make\_correlated\_data} of the python library \textsc{celer}. 
 This function creates a matrix $\As$ with columns generated according to the Autoregressive (AR) model\footnote{The code is available at https://github.com/mathurinm/celer/blob/501788e/celer/datasets/simulated.py\#L10}. Then $\bs$ is generated as $\bs = \As\bw + \epsilon$, where $\epsilon$ is a Gaussian random vector, with zero mean and variance equal to the identity, such that the signal to noise ration (SNR) is $3$ and $\bw$ is a vector with $1\%$ of nonzero entries. The nonzero blocks of $\bw$ are chosen uniformly and their entries are generated according to the standard normal distribution.
 As in \cite{liu2015asynchronous, cannelli2019asynchronous}, we make the assumption that $\tau$ is proportional to the number of machines. Since we use 10 cores, we fix $\tau = 10$ like in \cite{lian2015asynchronous}.
For a fixed data, we run the algorithm 10 times and average it.
 Similarly to \cite{liu2015asynchronous, cannelli2019asynchronous}, in our experiment the speedup gets better when we increase the number of blocks, see Figure \ref{fig:speedup}. This can be explained by the fact that the algorithm has to run long enough in order to minimize the cost of parallelization --- the initialization cost, the mandatory locks in order to avoid data racing, etc. Also, if there are more blocks, the probability of two machines having to write to the same block at the same time is reduced and so is the number of locks. All these observations align with the known fact that the more there are cores, the more the problem should be complex to see good speedup.

\begin{figure}[t]
     \centering
     \begin{subfigure}[b]{0.475\textwidth}
         \centering
         \includegraphics[width=\textwidth]{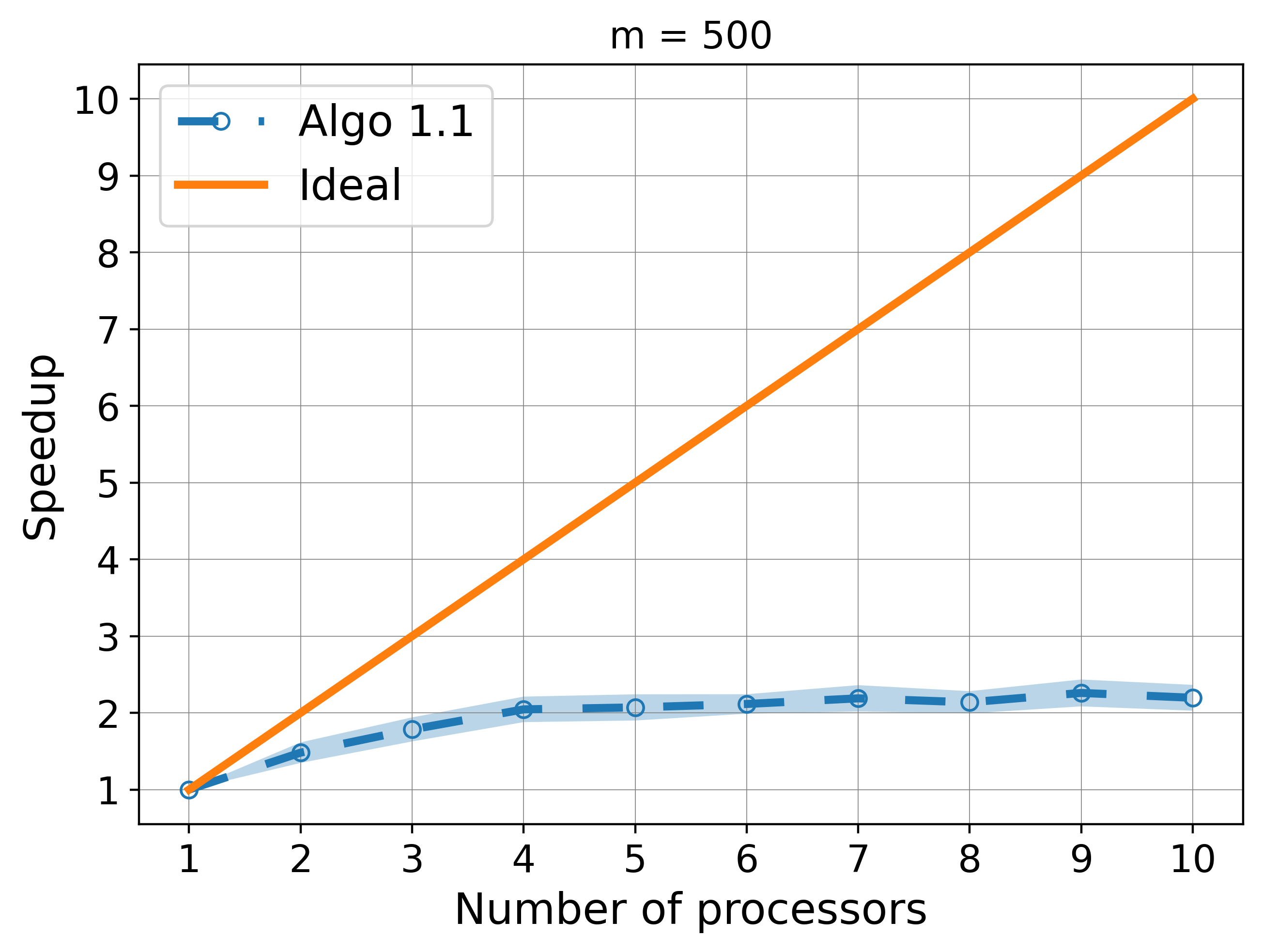}
     \end{subfigure}
     \hfill
     \begin{subfigure}[b]{0.475\textwidth}
         \centering
         \includegraphics[width=\textwidth]{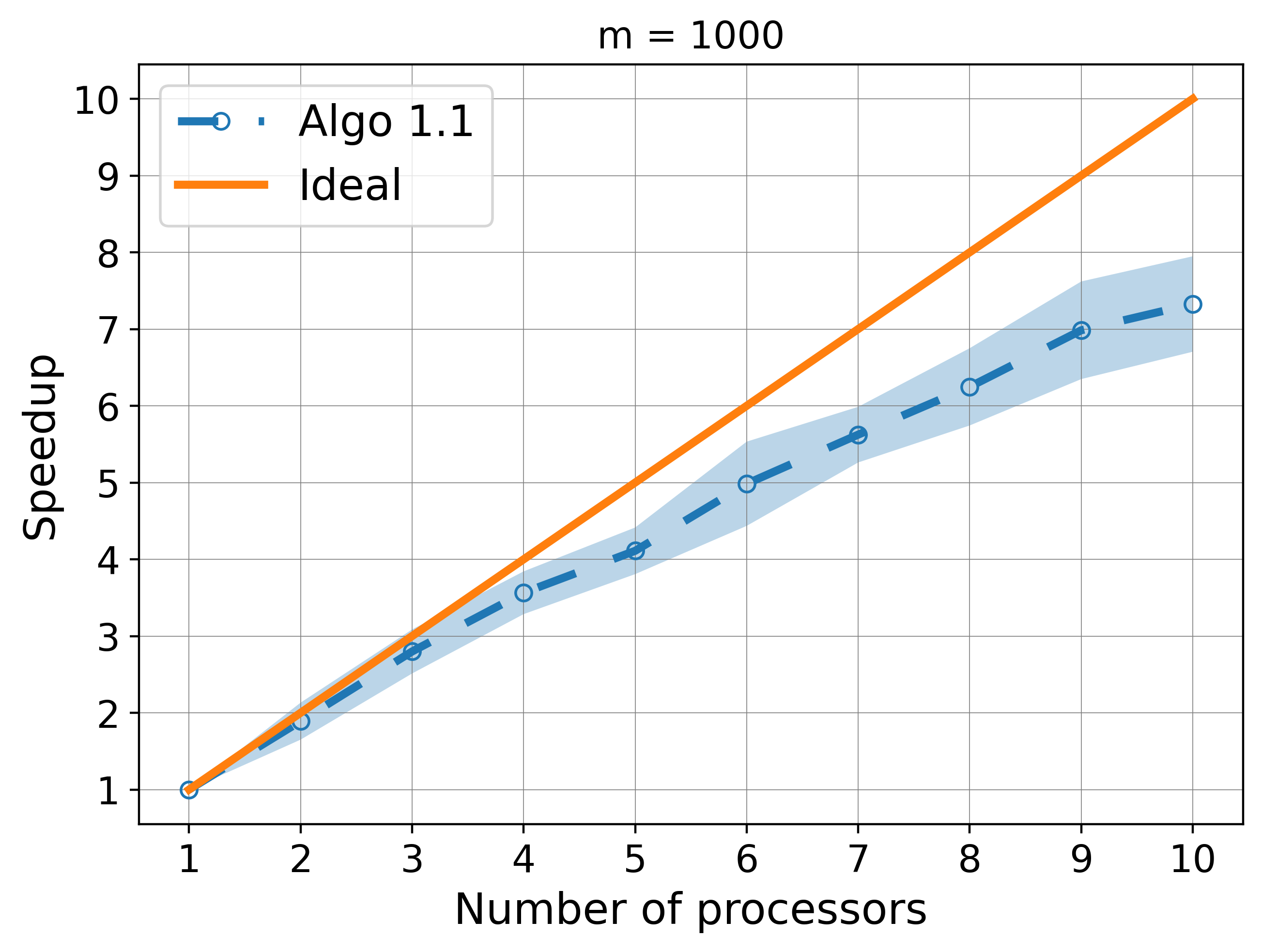}
     \end{subfigure}
     \begin{subfigure}[b]{0.475\textwidth}
         \centering
         \includegraphics[width=\textwidth]{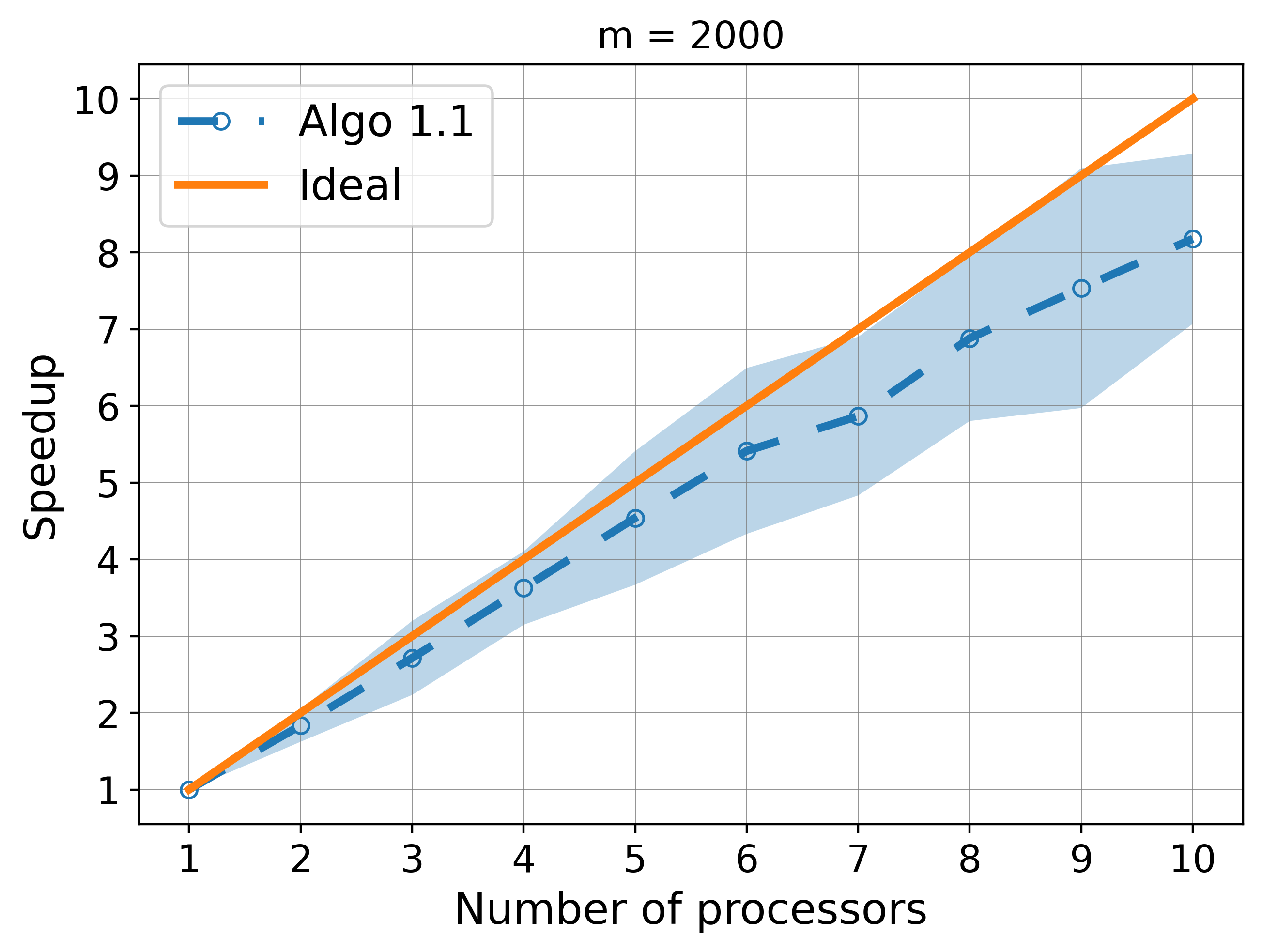}
     \end{subfigure}
     \hfill
     \begin{subfigure}[b]{0.475\textwidth}
         \centering
         \includegraphics[width=\textwidth]{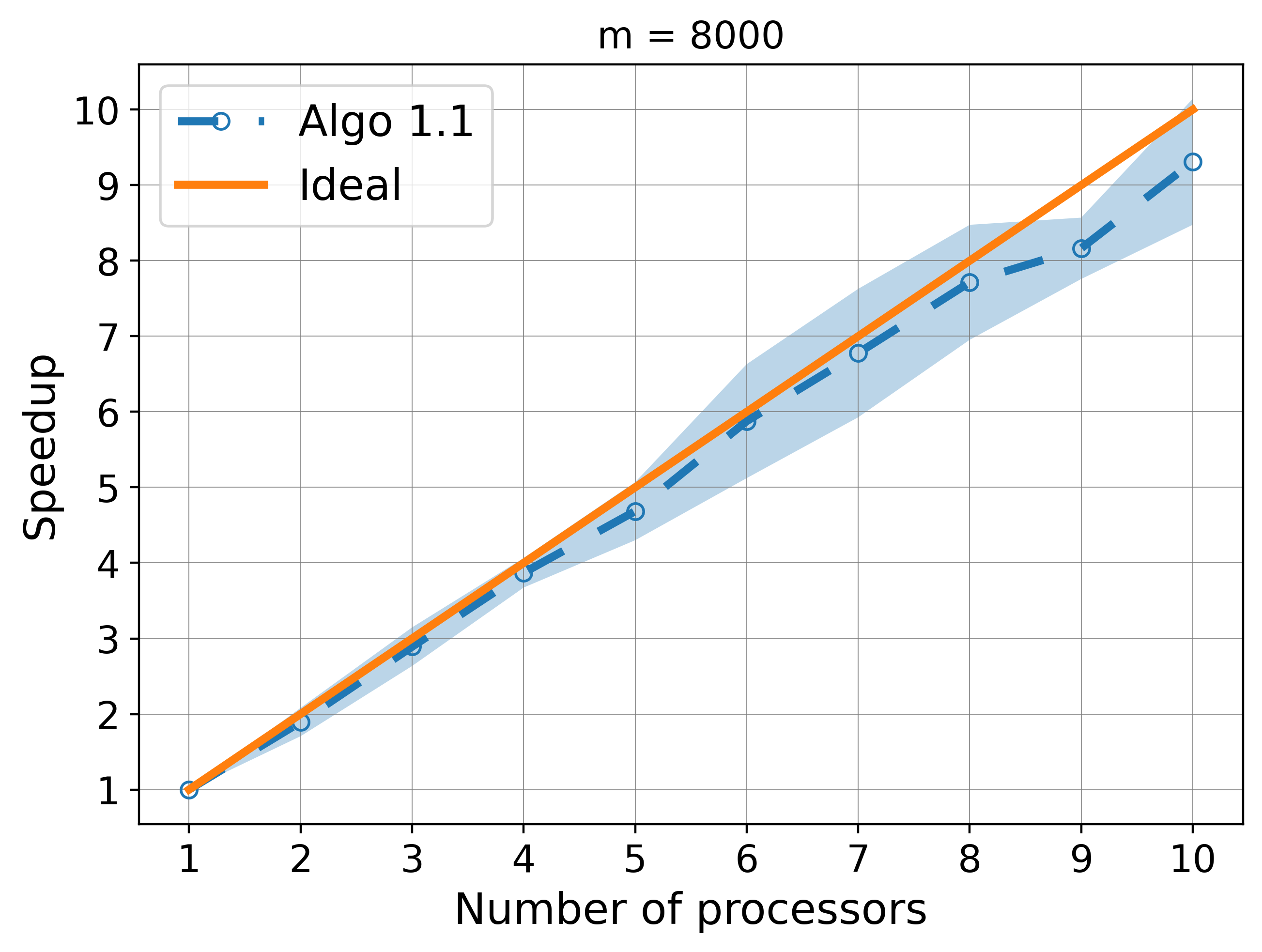}
     \end{subfigure}
        \caption{The plots showed the speedup obtain by Algorithm 1.1 compared to the ideal speedup for different number of blocks. The shaded zones illustrate the standard deviation of the results over 10 trials.}
        \label{fig:speedup}
\end{figure}

\begin{figure}[ht!]
     \centering
     \includegraphics[width=.5\textwidth]{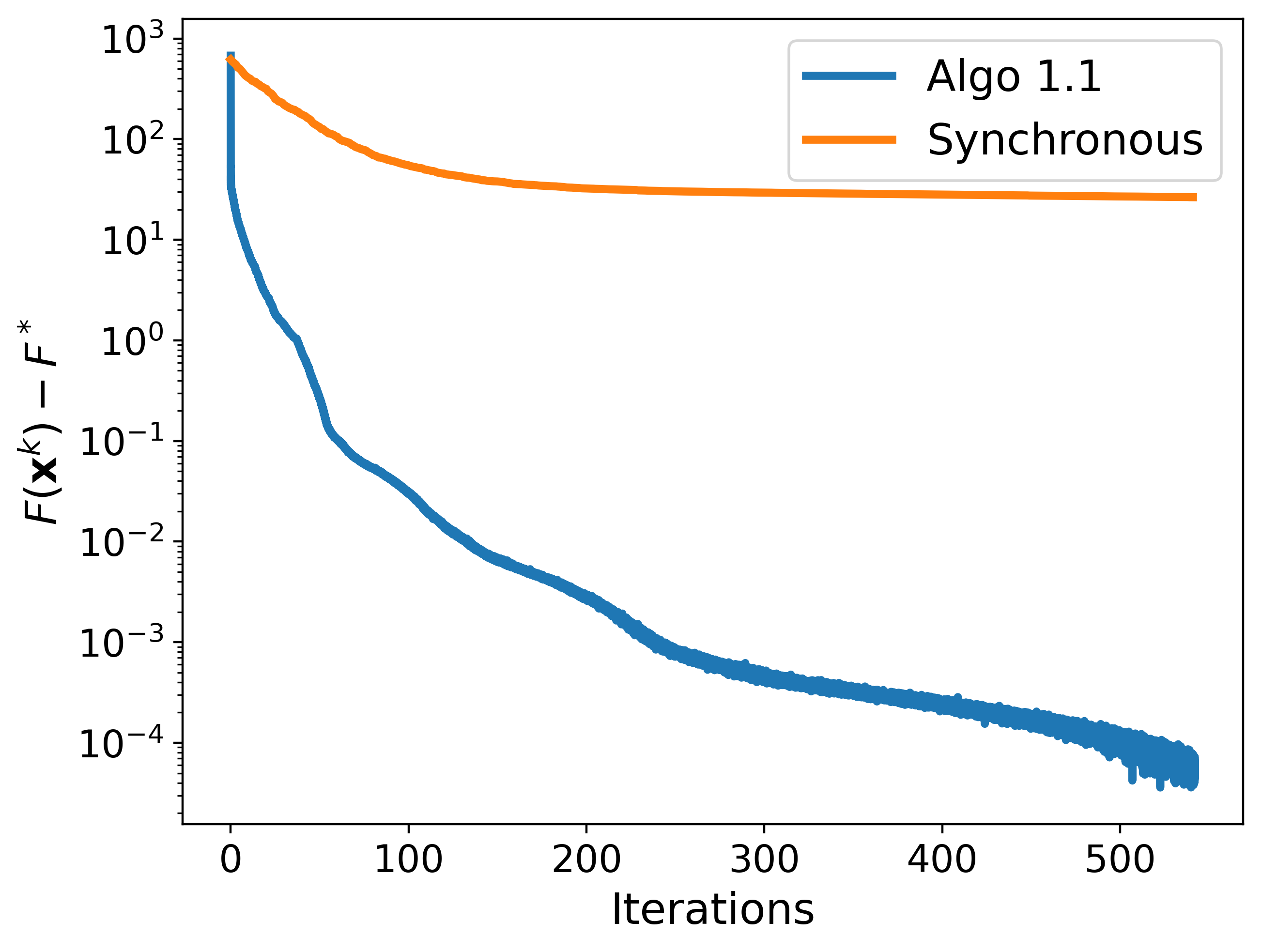}
    \caption{Comparison of Algorithm \ref{algoAsymain} to its synchronous counterpart.}
    \label{fig:synch}
\end{figure}

\subsection{Comparison with the synchronous version} \label{sec:exp_synchronous}

We compared Algorithm \ref{algoAsymain} to its synchronous counterpart in the Lasso case. The data, as well as the parameters, is generated as in the speedup experiment. The step size of the synchronous algorithm is set as suggested in \cite{salzo2021parallel} for a non sparse matrix $\As$. We run both algorithms for $120$ seconds and compare the distances of their function values to the minimum. As expected, Algorithm \ref{algoAsymain} is faster; see Figure \ref{fig:synch}.

\subsection{Comparison with other asynchronous algorithms}
In this section we illustrate the results of the comparison with the algorithms proposed in \cite{liu2015asynchronous} and \cite{cannelli2019asynchronous}.
As for \cite{cannelli2019asynchronous},  we set (in the notation of the paper) the relaxation parameter $\gamma = 1$ and $c_{\tilde{f}}=2\beta$ so that
\begin{equation*}
x^{k+1}_i = 
\begin{cases}
\prox_{ (1/2\beta) g_{i}} \big(x^{k-d^k_{i}}_{i} - (1/2\beta) \nabla_{i} f (\bx^{k-\bds^k})\big) &\text{if } i=i_{k}\\
x^k_i &\text{if } i \neq i_{k}.
\end{cases}
\end{equation*}
Then, according to Theorem~1 in \cite{cannelli2019asynchronous}, we choose $2\beta > L_f(1+\delta^2/2)$ where $\delta = \tau$ is the maximum delay. We note that this model is slightly different from ours since the delay is present not only in the gradient.

 In \cite{liu2015asynchronous}, the same algorithm as \eqref{eq:algoPRCD2} is considered, but with a stepsize $\gamma$ which is the same for all the blocks. In our comparisons, we choose the step according to the conditions required by the main Theorem $4.1$ in \cite{liu2015asynchronous}, since the hypotheses of Corollary 4.2 are not satisfied for our datasets\footnote{For the two datasets we used, \textsc{YearPredictionMSD.t} and \textsc{Splice.t}, 
 we have that $\sqrt{m}/(4 e \Lambda)$
 is equal to $0.62084123$ $0.00459623$ respectively, so that
 condition \eqref{eq:20220902a} is never satisfied
 by any nonnegative integer $\tau$.}, see the discussion in  Remark \ref{rmk:20211216a} \ref{item:20211216a}.
If $\tau$ is the maximum delay, Theorem $4.1$ in \cite{liu2015asynchronous} requires the following conditions on the stepsize:
\begin{equation*}
\gamma < \frac{\sqrt{n}(1-\rho^{-1})-4}{4(1+\theta)L_{\text{res}}/L'_{\max}}
\quad
\text{with}\ 
\theta=\frac{\rho^{(\tau+1) / 2}-\rho^{1 / 2}}{\rho^{1 / 2}-1},
\end{equation*}
which only makes sense if the right hand side is strictly positive, so when $n > 16$ and $\rho > \frac{1 + 4/\sqrt{n}}{1 - 16/n}$ (instead of $\rho > 1 + 4/\sqrt{n}$ as claimed in \cite{liu2015asynchronous}). So, in the experiments, we set $\rho > \frac{1 + 4/\sqrt{n}}{1 - 16/n}$. This leads in general to very small stepsizes, as we will further discuss in the next section. 

\begin{figure}[t!]
     \centering
     \begin{subfigure}[b]{0.475\textwidth}
         \centering
         \includegraphics[width=\textwidth]{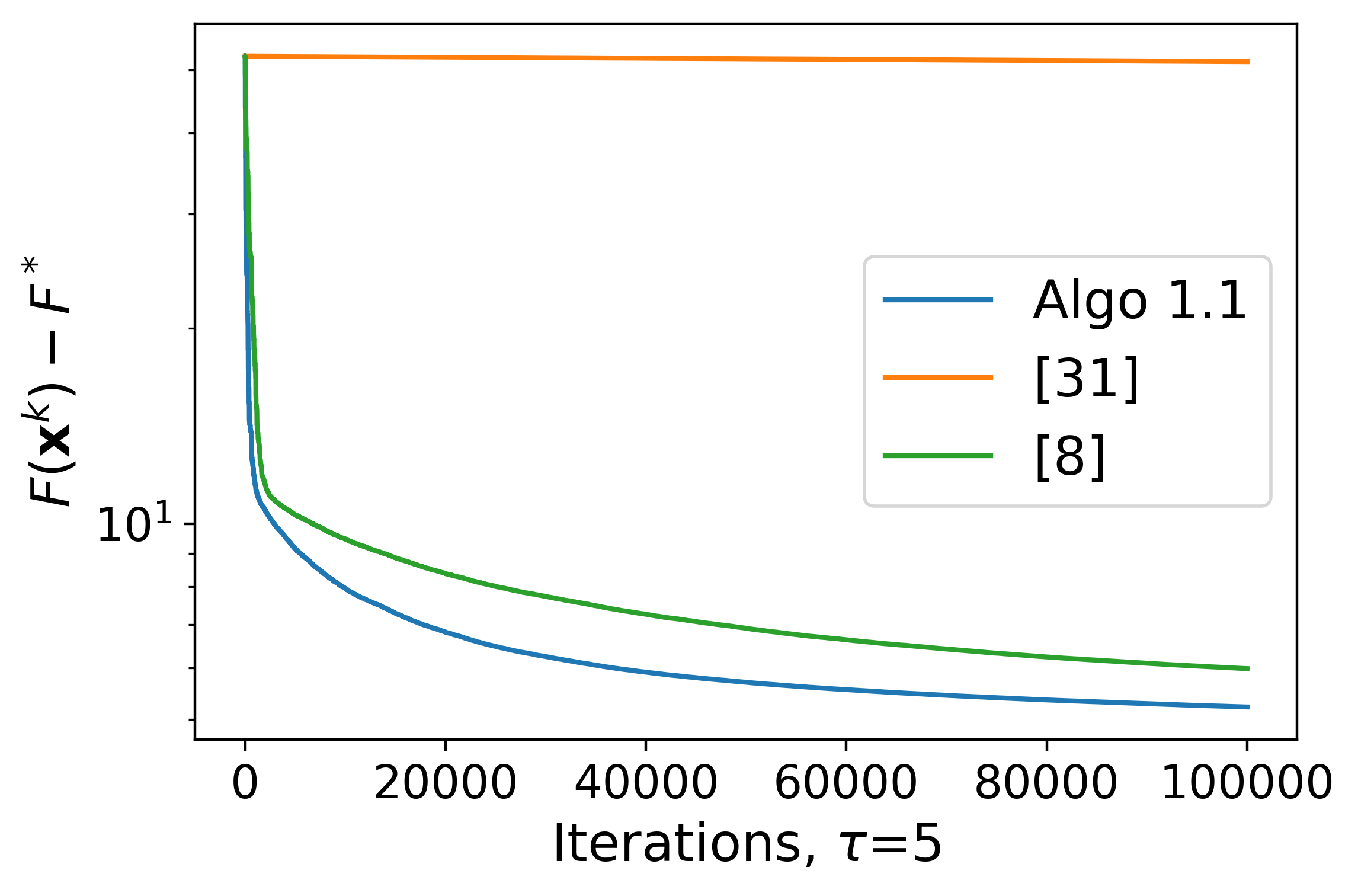}
     \end{subfigure}
     \hfill
     \begin{subfigure}[b]{0.475\textwidth}
         \centering
         \includegraphics[width=\textwidth]{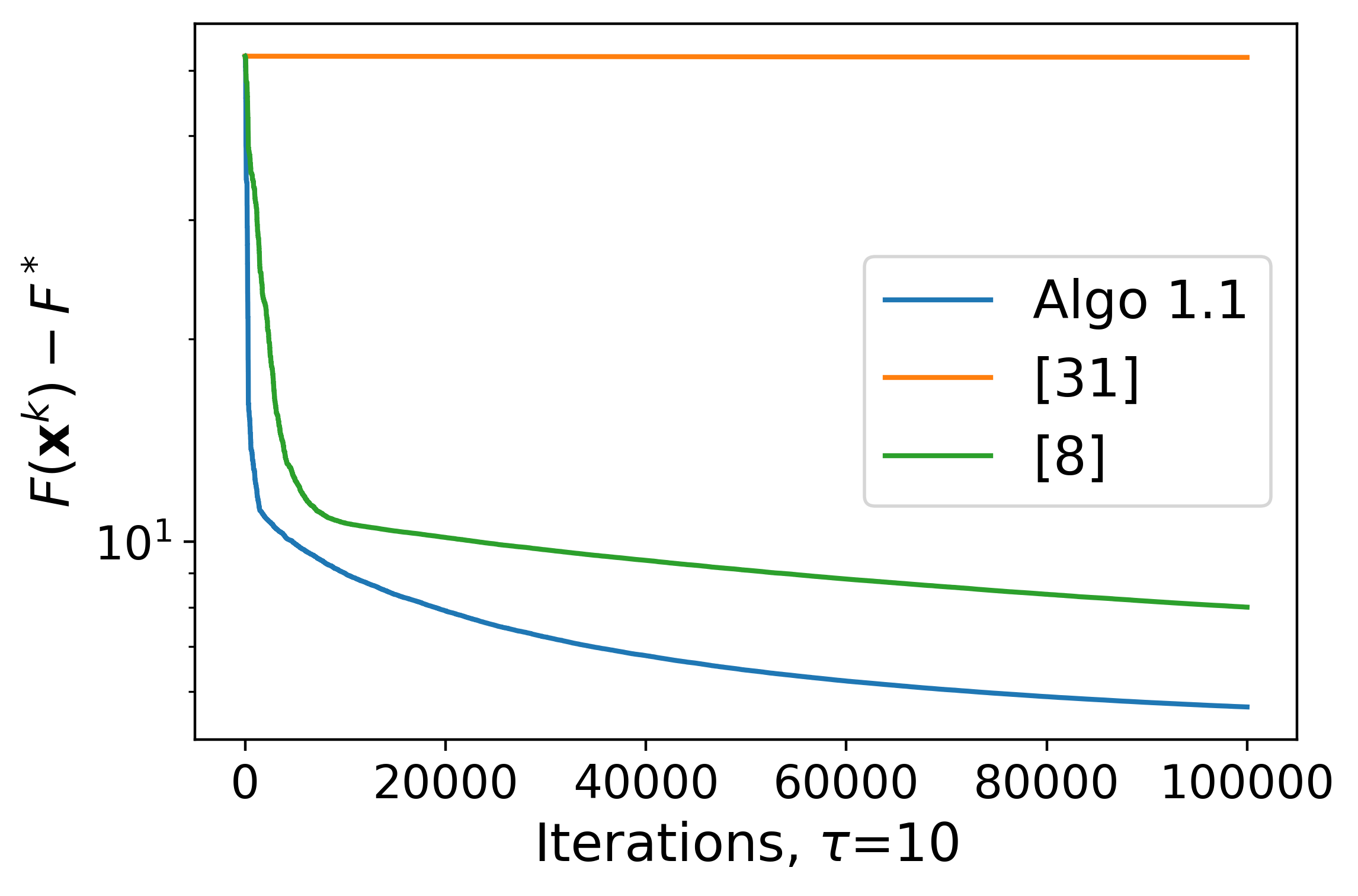}
     \end{subfigure}
     \begin{subfigure}[b]{0.475\textwidth}
         \centering
         \includegraphics[width=\textwidth]{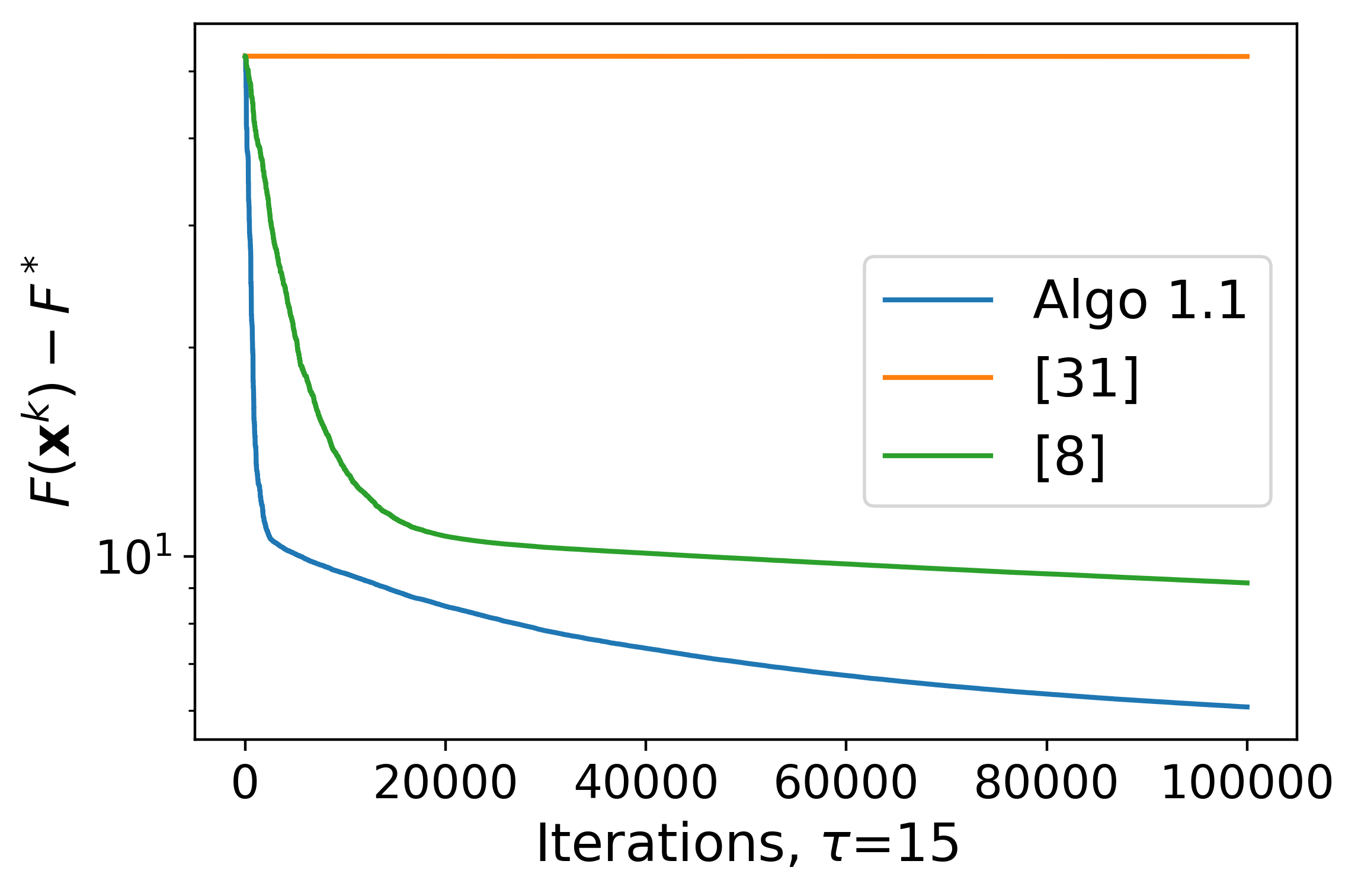}
     \end{subfigure}
     \hfill
     \begin{subfigure}[b]{0.475\textwidth}
         \centering
         \includegraphics[width=\textwidth]{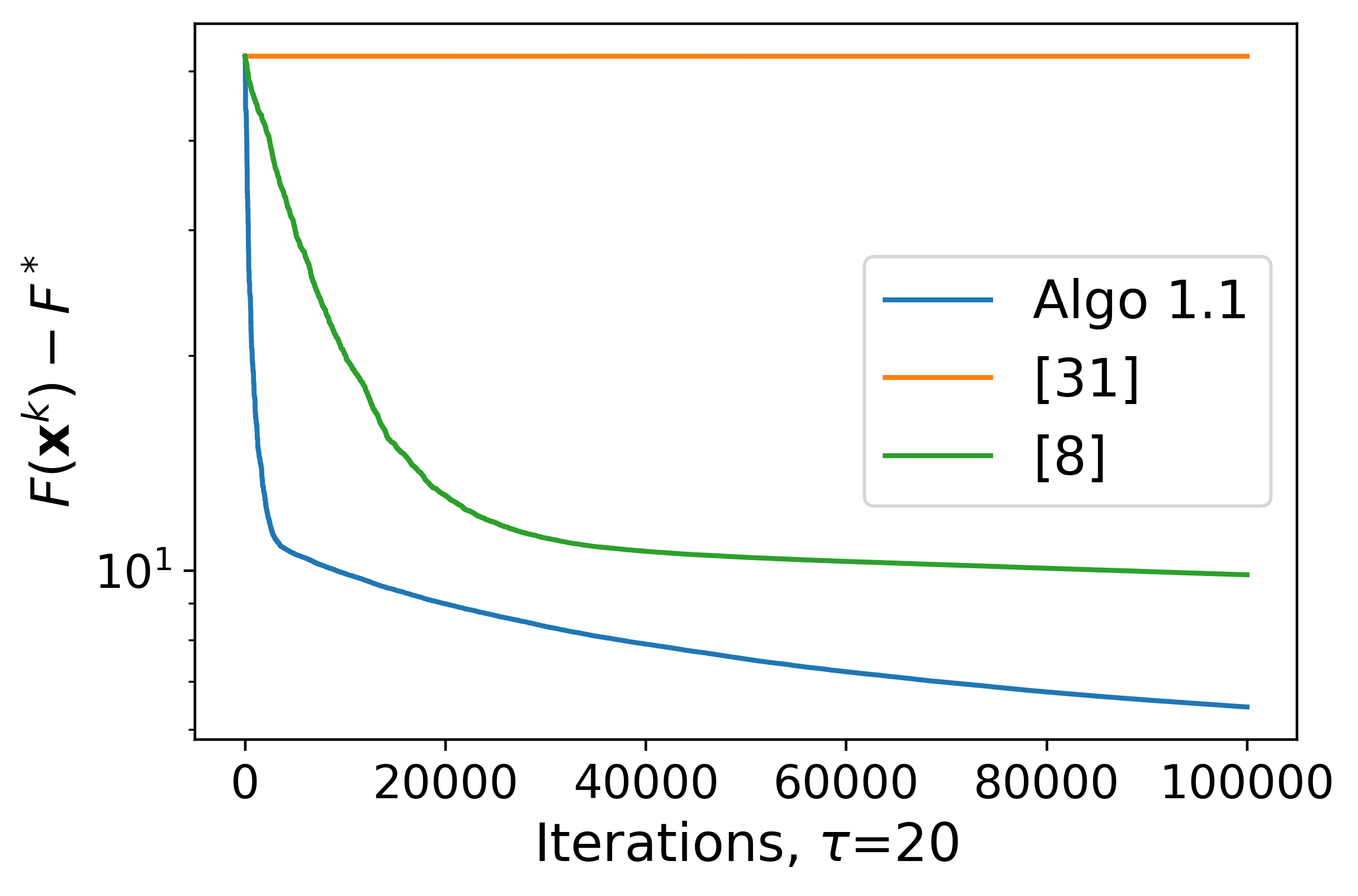}
     \end{subfigure}
        \caption{The plots show the behavior of $F(\bx_k)-  F^*$ for the 3 algorithms applied to a lasso loss with different values of $\tau$: 5, 10, 15, 20.}
        \label{fig:lasso}
\end{figure}

\subsubsection{Lasso problem}
In this section we consider the Lasso problem \eqref{app:lasso} with $m=90$, $n=51630$, and $\lambda=0.01$. We use the data \textsc{YearPredictionMSD.t} from \textsc{libsvm}\footnote{\label{ftnte:1} https://www.csie.ntu.edu.tw/{\raise.17ex\hbox{$\scriptstyle\sim$}}cjlin/libsvmtools/datasets/} to generate the matrix $\As$.
Before showing the results, we briefly comment on the experimental set-up.
As shown in Section~\ref{sec:lasso}, in this case $L_i=\|\As_{\cdot i}\|_2^2$ and $L_{\mathrm{res}}=\max_{i}\|\As^{\intercal}\As_{\cdot i}\|_{2}$. 
In $\cite{cannelli2019asynchronous}$,  $L_f=L_{\mathrm{res}}$ and in \cite{liu2015asynchronous}  $L'_{\max}=\max_{i}\|\As^{\intercal}\As_{\cdot i}\|_{\infty}$.

Looking at the results, we see that our algorithm outperforms those in \cite{liu2015asynchronous} and \cite{cannelli2019asynchronous}, see Figure \ref{fig:lasso}. This difference is due to the fact that our stepsize is bigger than the other two. Indeed, in \cite{liu2015asynchronous}
and \cite{cannelli2019asynchronous} the stepsizes
have a worse dependence on the maximum delay $\tau$ 
(inverse quadratically in \cite{cannelli2019asynchronous}
and exponentially in \cite{liu2015asynchronous}),
which ultimately shorten the stepsizes.
Also, in both \cite{liu2015asynchronous} and \cite{cannelli2019asynchronous} the stepsize is the same for all the blocks, so the algorithm is more sensitive to the conditioning of the problem.
An overall comparison of the effect of $\tau$
on the stepsize is shown in Figure \ref{fig:dpdcetau}.

\begin{figure}[t]
     \centering
     \begin{subfigure}[b]{0.475\textwidth}
         \centering
         \includegraphics[width=\textwidth]{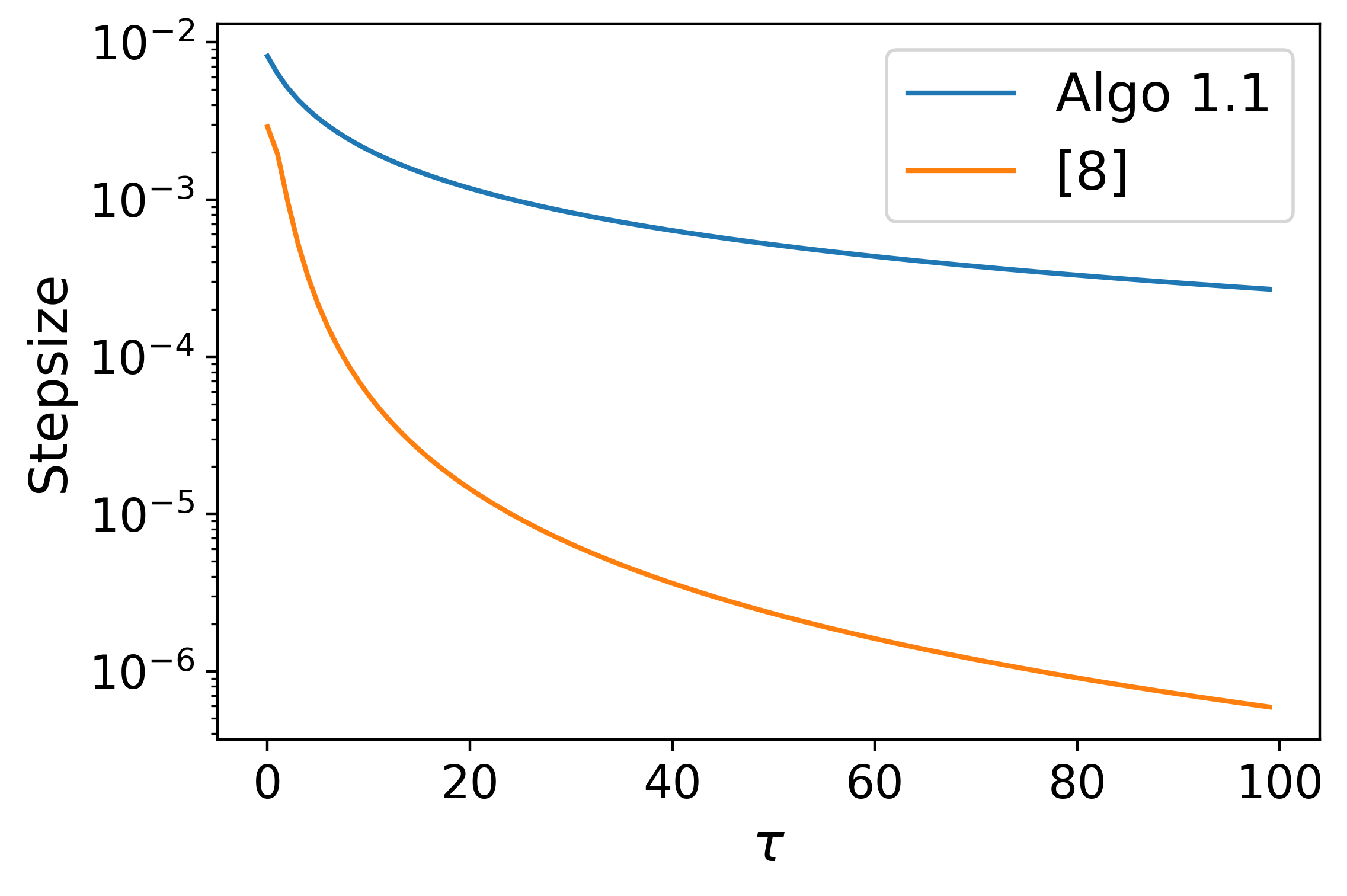}
     \end{subfigure}
     \hfill
     \begin{subfigure}[b]{0.475\textwidth}
         \centering
         \includegraphics[width=\textwidth]{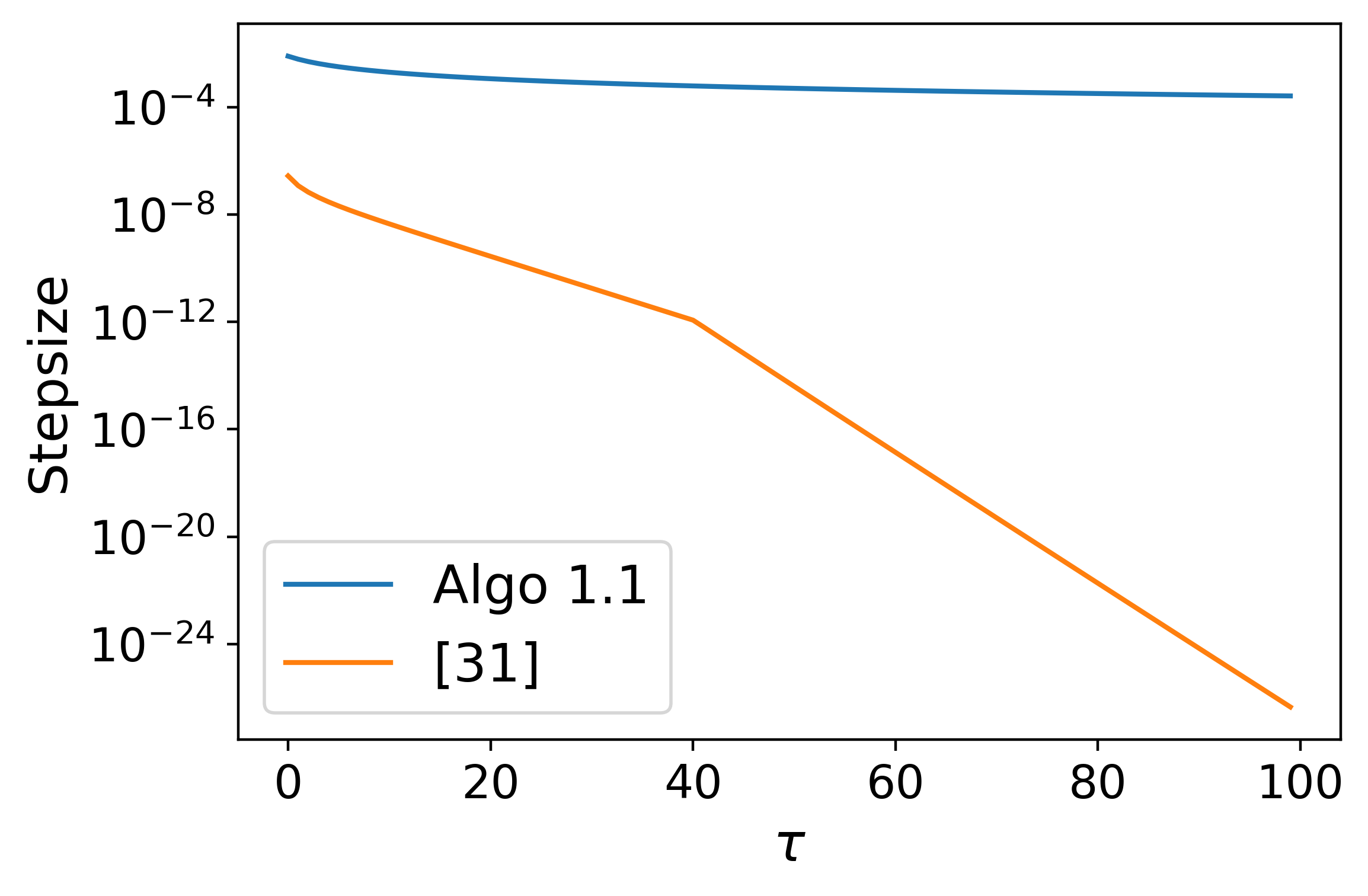}
     \end{subfigure}
        \caption{This figure shows how the minimum of our stepsizes fares against the two others when $\tau$ increases on a lasso problem.}
        \label{fig:dpdcetau}
\end{figure}

\subsubsection{Logistic regression}
For another comparison, next we consider the $\ell_1$ regularized logistic loss:
\begin{equation}\label{app:logistic}
    F(x) = \frac{1}{n} \sum_{i=1}^n \log(1+\exp\{-b_i\langle a_i, \bx\rangle\}) + \lambda \|\bx\|_{1}.
\end{equation}
For this experiment we use the data \textsc{Splice.t} from \textsc{libsvm}\footnote{\url{https://www.csie.ntu.edu.tw/\~cjlin/libsvmtools/datasets/}} with $m=60$, $n=2175$, and $\lambda=0.01$.
Let $\As \in \R^{m \times n}$ be the matrix with columns the $a_i$'s 
($i \in [n]$). We denote by $\|\cdot\|$, $\|\cdot\|_{\infty}$,
$\|\cdot\|_{F}$, the spectral norm, the infinity norm, and the Frobenius norm of matrices, respectively. 
The relevant constants for the stepsizes are 
\begin{itemize}
    \item $L_{\mathrm{res}}= \frac{1}{n}\|\As\| \max_j \|\As_{j \cdot}\|_2$ for our algorithm and \cite{liu2015asynchronous},
    \item $L^\prime_{\max } = \frac{1}{n} \|\As\|_{\infty} \max_j \|\As_{j \cdot}\|_{\infty}$ for \cite{liu2015asynchronous},
    \item $L_j = \frac{1}{n}\|\As_{j \cdot}\|_2^2$, $j \in [m]$, for our algorithm, where $\As_{j \cdot}$ is the $j$-th row of $\As$.
    \item $L_f = \frac{1}{n}\|\As\|_{F} \max_j \|\As_{j \cdot}\|_2$ for \cite{cannelli2019asynchronous}.
\end{itemize}

So, the stepsizes range from about 
$1.1191*10^{-3} \text{ to } 7.5164*10^{-3}$ for \cite{cannelli2019asynchronous}, $5.6537*10^{-8} \text{ to } 2.1571*10^{-10}$ for \cite{liu2015asynchronous}, and $2.2605*10^{-2} \text{ to } 6.1590*10^{-3}$ for our algorithm.
The results show the same trend as in the Lasso case, actually 
with even larger differences, see Figure \ref{fig:logistic}.

\begin{figure}
     \centering
     \begin{subfigure}[b]{0.475\textwidth}
         \centering
         \includegraphics[width=\textwidth]{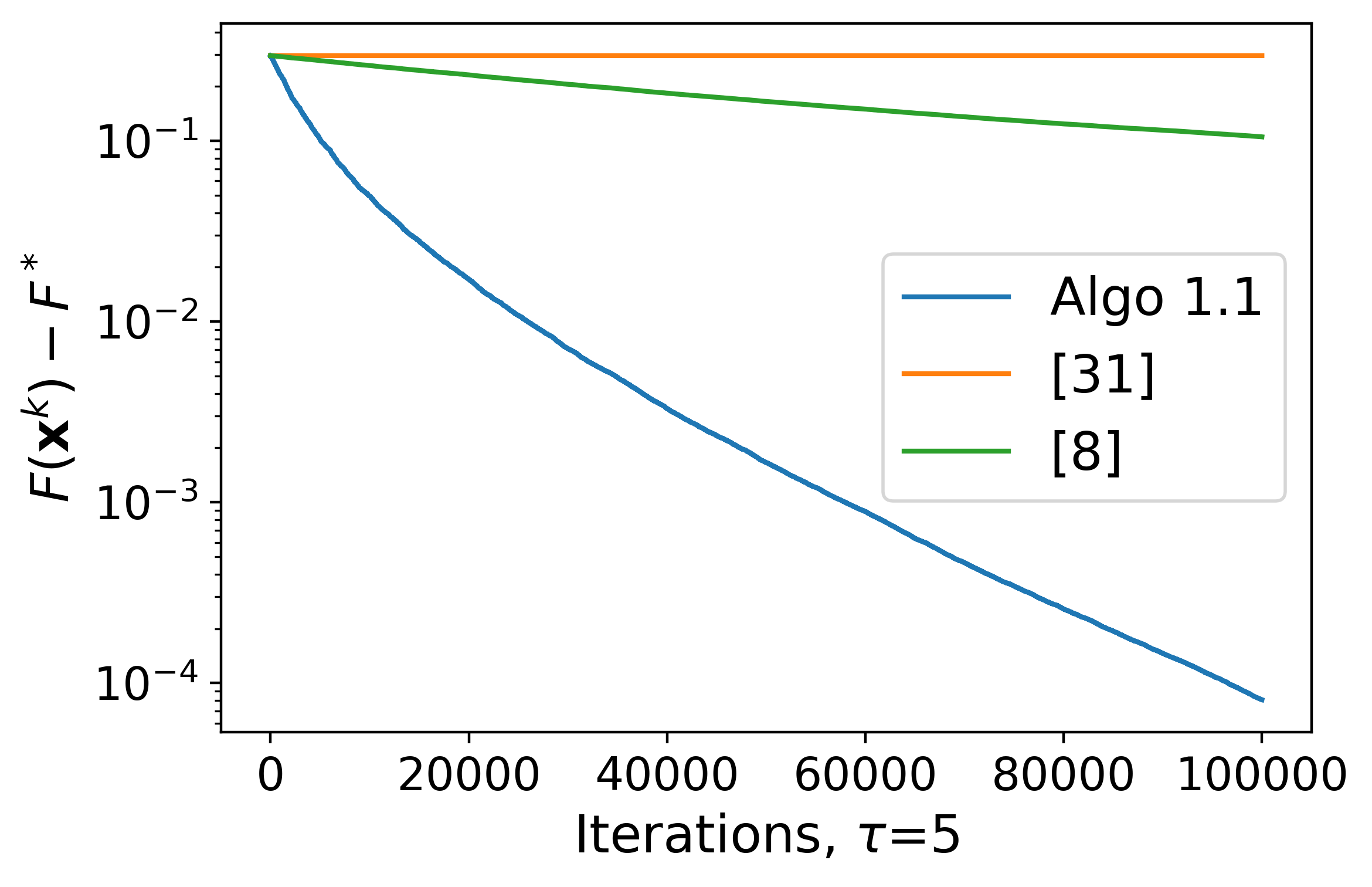}
     \end{subfigure}
     \hfill
     \begin{subfigure}[b]{0.475\textwidth}
         \centering
         \includegraphics[width=\textwidth]{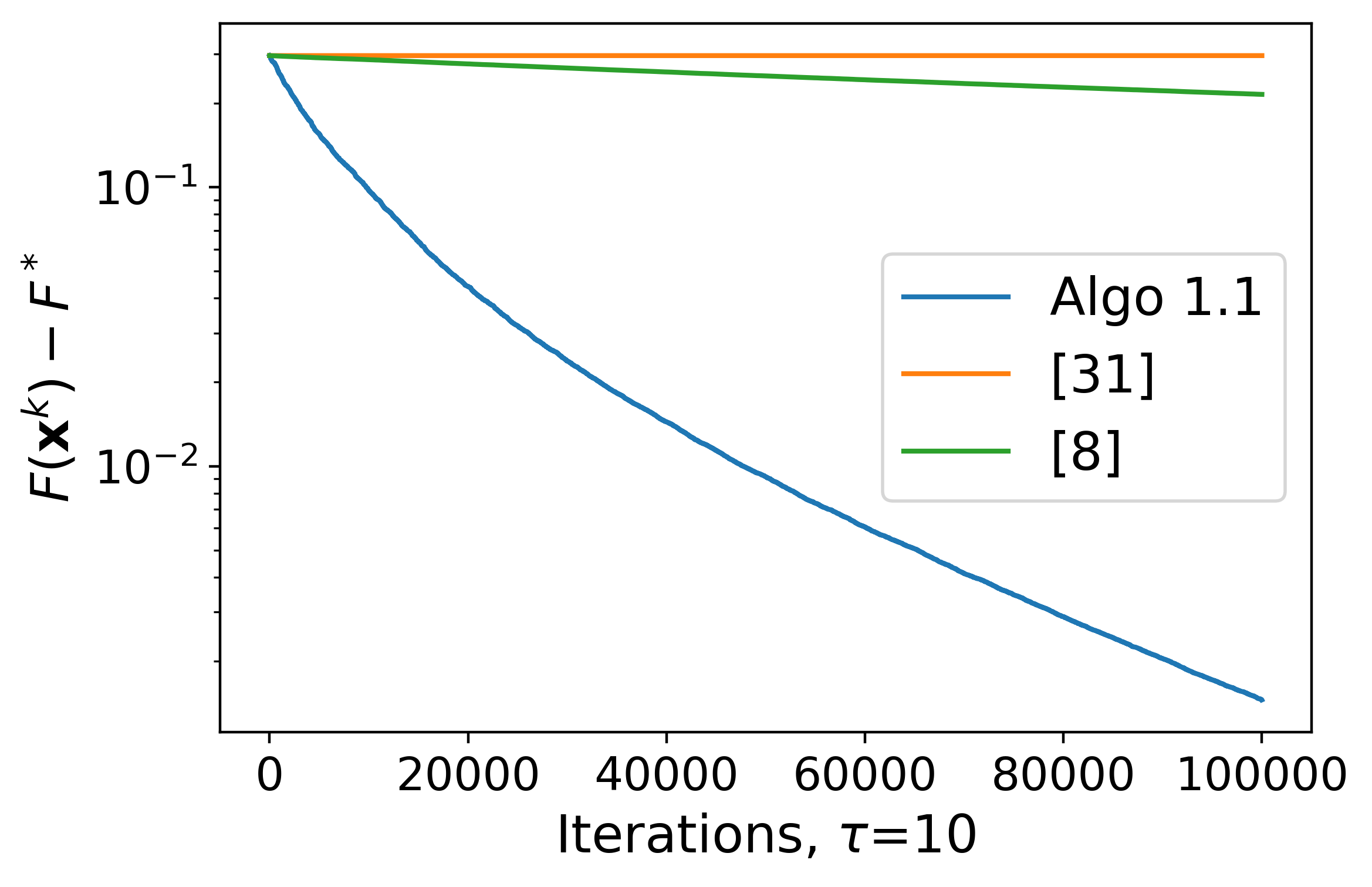}
     \end{subfigure}
     \begin{subfigure}[b]{0.475\textwidth}
         \centering
         \includegraphics[width=\textwidth]{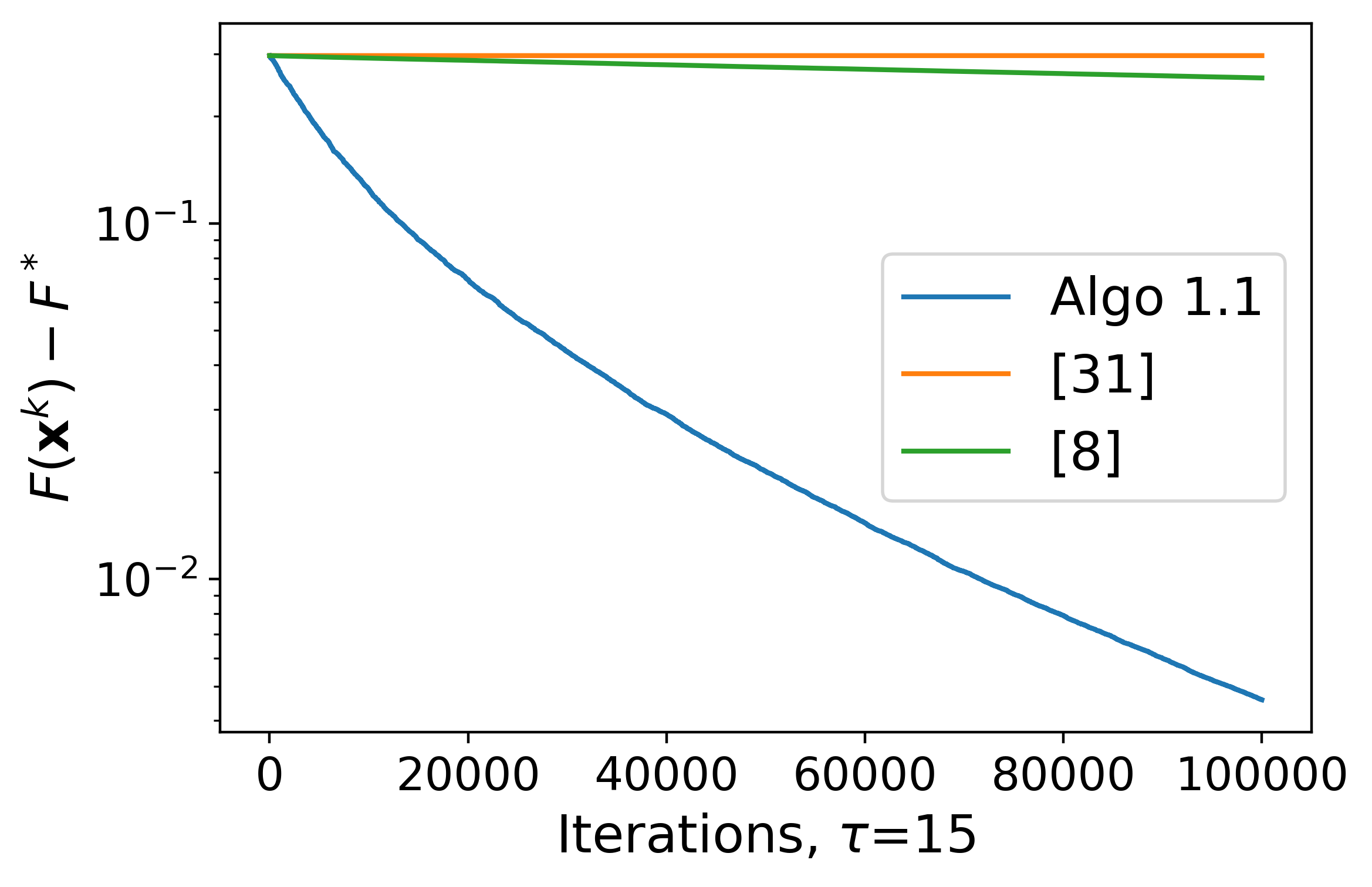}
     \end{subfigure}
     \hfill
     \begin{subfigure}[b]{0.475\textwidth}
         \centering
         \includegraphics[width=\textwidth]{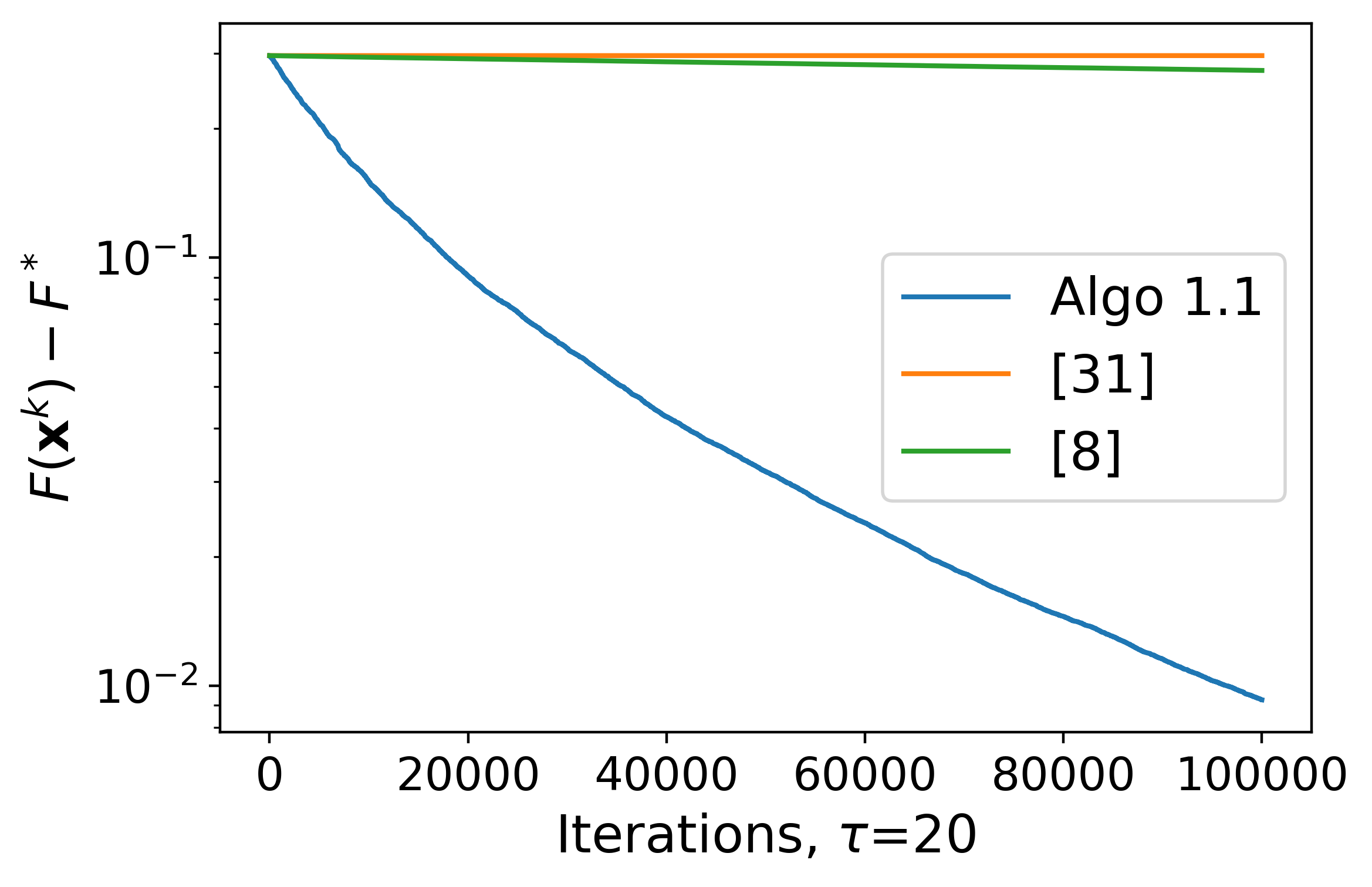}
     \end{subfigure}
        \caption{The plots show the behavior of $F(\bx_k)-  F^*$ for the 3 algorithms applied to a regularized logistic loss for different values of $\tau$: 5, 10, 15, 20.}
        \label{fig:logistic}
\end{figure}


\appendix

\noindent {\huge \textbf{Appendices}}
\section{Proofs of the auxiliary Lemmas in Section~\ref{sec:preliminaries}} 
\label{app:proof}
\label{AppendixA} 

In this section, for reader's convenience, we provide detailed proofs of the Lemmas presented in Section~\ref{sec:preliminaries}, even though they are mostly not original. They are adapted from or can be found, e.g., in \cite{liu2015asynchronous,salzo2021parallel}.

\vspace{2ex}
\noindent
\textbf{Proof of Lemma \ref{p:20170918c}.}
Let $k \in \N$.
Since, for every $i \in [m]$, $\ds_i^k \leq \min\{k,\tau\}$, we have 
\begin{align}
\nonumber\bx^{k- \bds^k} - \bx^k  &= \sum_{i=1}^m \Js_i(x^{k - \ds^k_i}_i - x^k_i)\\
\nonumber&=  \sum_{i=1}^m \Js_i\bigg( \sum_{h=k-\ds^k_i}^{k-1} (x^{h}_i - x^{h+1}_i)\bigg)\\
\nonumber&=  \sum_{i=1}^m \Js_i\bigg( \sum_{h=k-\tau}^{k-1} \delta_{h,i}(x^{h}_i - x^{h+1}_i)\bigg)\\
\label{eq:20170913a}&=   \sum_{h=k-\tau}^{k-1} \sum_{i=1}^m  \Js_i\big(\delta_{h,i}(x^{h}_i - x^{h+1}_i)\big).
\end{align}
where $\delta_{h,i}=1$ if $h \geq k - \ds^k_{i}$ and $\delta_{h,i} = 0$ if $h<k - \ds^k_{i}$. 
Note that for any $h \in \{k-\tau, \dots, k-1\}$, in the sum
\begin{equation*}
\sum_{i=1}^m  \Js_i\big(\delta_{h,i}(x^{h}_i - x^{h+1}_i)\big)
\end{equation*}
at most one summand is different from zero, because the difference between $\bx^h$ 
and $\bx^{h+1}$ is only in the $i_h$-th component. So
\begin{equation*}
\sum_{i=1}^m  \Js_i\big(\delta_{h,i}(x^{h}_i - x^{h+1}_i)\big)=
\begin{cases}
\Js_{i_h}(x^{h}_{i_h} - x^{h+1}_{i_h})\ = \bx^h - \bx^{h+1} &\text{if } h \geq k - \ds^k_{i_h}\\
0 &\text{if } h < k - \ds^k_{i_h}.
\end{cases}
\end{equation*}
Therefore setting $J(k) = \big\{h \in \{k-\tau,\dots, k-1\} \,\vert\, h \geq k - \ds^k_{i_h} \big\}$, 
 \eqref{eq:20170913a} yields \eqref{eq:20170918d}.
Note that, since $i_h$ is a random variable,
$J(k)$ is a random set in the sense that
$J(k)(\omega) = \big\{h \in \{k-\tau,\dots, k-1\} \,\vert\, h \geq k - \ds^k_{i_h(\omega)} \big\}$.
\qed

\vspace{2ex}
\noindent
\textbf{Proof of Lemma \ref{p:20170921n}.}
Let $k \in \N$,
let $ p = \mathrm{card} (J(k))$, and let $(h_j)_{1 \leq j \leq p} $ be the elements of $ J(k) $
ordered in (strictly) increasing order. Then, from Lemma \ref{p:20170918c} we have
\begin{equation}
\label{eq:20170918e}
\bx^k - \bhat{\bx}^{k} = \sum_{j = 1}^p  (\bx^{h_j+1} - \bx^{h_j}).
\end{equation}
Let's set, for each $t \in \{0,\dots, p\}$
\begin{equation*}
\bhat{\bx}^{k,t} = \bhat{\bx}^{k} + \sum_{j = 1}^t  (\bx^{h_j+1} - \bx^{h_j}). 
\end{equation*}
Then it follows
\begin{equation*}
\bhat{\bx}^{k,0} = \bhat{\bx}^{k}, \quad
\bhat{\bx}^{k,p} = \bx^k,\quad\text{and}\quad
\forall\, t \geq1\quad\bhat{\bx}^{k,t} - \bhat{\bx}^{k,t-1} 
= \bx^{h_t + 1} - \bx^{h_t}.
\end{equation*}
Therefore
\begin{equation*}
\bx^k - \bhat{\bx}^{k} = \sum_{t= 1}^p  (\bhat{\bx}^{k,t} - \bhat{\bx}^{k,t-1})
\end{equation*}
and $\bhat{\bx}^{k,t}, \bhat{\bx}^{k,t-1} $ differ only in the value of a component. Thus
\begin{align*}
\norm{\nabla f (\bx^k) - \nabla f(\bhat{\bx}^{k})}
&= \Big\lVert \sum_{t=1}^p  \nabla f (\bhat{\bx}^{k,t}) -  \nabla f (\bhat{\bx}^{k,t-1}) \Big\rVert\\
&\leq \sum_{t=1}^p \norm{\nabla f (\bhat{\bx}^{k,t}) - \nabla f(\bhat{\bx}^{k,t-1})}\\
&\leq L_{\mathrm{res}} 
\sum_{t=1}^p \norm{\bhat{\bx}^{k,t} - \bhat{\bx}^{k,t-1}}\\
&= L_{\mathrm{res}} 
\sum_{t=1}^p \norm{\bx^{h_t + 1} - \bx^{h_t}}\\
&= L_{\mathrm{res}} 
\sum_{h \in J(k)} \norm{\bx^{h + 1} - \bx^{h}}.
\end{align*}
from which the result follows.
\qed

\vspace{2ex}
\noindent
\textbf{Proof of Lemma \ref{eq:20210308a}.}
Let $k \in \N$ and $\bxs \in \bHH$. Then
\begin{align}
\scalarp{\nabla f(\bhat{\bx}^k), \bxs - \bx^k} &=  \scalarp{\nabla f(\bhat{\bx}^k), \bxs - \bhat{\bx}^k}  
+ \scalarp{\nabla f(\bhat{\bx}^k),  \bhat{\bx}^k - \bx^k}\nonumber\\
& = \scalarp{\nabla f(\bhat{\bx}^k), \bxs - \bhat{\bx}^k} + 
\sum_{t=0}^{p-1} \scalarp{\nabla f(\bhat{\bx}^k),  \bhat{\bx}^{k,t} - \bhat{\bx}^{k,t+1}}\nonumber\\
& =  \scalarp{\nabla f(\bhat{\bx}^k), \bxs - \bhat{\bx}^k}\nonumber\\
&\qquad + \sum_{t=0}^{p-1} \scalarp{\nabla f(\bhat{\bx}^{k,t}),  \bhat{\bx}^{k,t} - \bhat{\bx}^{k,t+1}}
+ \scalarp{\nabla f(\bhat{\bx}^k) - \nabla f(\bhat{\bx}^{k,t}),  \bhat{\bx}^{k,t} - \bhat{\bx}^{k,t+1}}. \nonumber
\end{align}
Thanks to the convexity of $f$ and \eqref{eq:20170925d}, it follows
\begingroup
\allowdisplaybreaks
\begin{align}
\scalarp{\nabla f(\bhat{\bx}^k), \bxs - \bx^k}
& \leq   f(\bxs) - f(\bhat{\bx}^k)  +
\sum_{t=0}^{p-1} f(\bhat{\bx}^{k,t}) - f(\bhat{\bx}^{k,t+1}) 
+ \frac{L_{\max}}{2} \norm{\bhat{\bx}^{k,t} - \bhat{\bx}^{k,t+1}}^2 \nonumber\\
&\qquad +  \sum_{t =0}^{p-1}
\scalarp{\nabla f(\bhat{\bx}^k) - 
\nabla f(\bhat{\bx}^{k,t}),  \bhat{\bx}^{k,t} - \bhat{\bx}^{k,t+1}}\nonumber\\
& =   f(\bxs) - f(\bx^k) 
+ \frac{L_{\max}}{2}  \sum_{t=0}^{p-1}
\norm{\bhat{\bx}^{k,t} - \bhat{\bx}^{k,t+1}}^2 \nonumber \\
&\qquad+  \sum_{t =0}^{p-1} \sum_{s=0}^{t-1}
\scalarp{\nabla f(\bhat{\bx}^{k,s}) 
- \nabla f(\bhat{\bx}^{k,s+1}),  \bhat{\bx}^{k,t} - \bhat{\bx}^{k,t+1}}\nonumber\\
& \leq    f(\bxs) - f(\bx^k) 
+ \frac{L_{\max}}{2} \sum_{t=0}^{p-1}\norm{\bhat{\bx}^{k,t} - \bhat{\bx}^{k,t+1}}^2\nonumber\\
 &\qquad+ L_{\mathrm{res}} \sum_{t =0}^{p-1} \sum_{s=0}^{t-1}
 \norm{\bhat{\bx}^{k,s} - \bhat{\bx}^{k,s+1}} \norm{\bhat{\bx}^{k,t} - \bhat{\bx}^{k,t+1}} \nonumber.
\end{align}
\endgroup
Using the equality of the square of sum, Holder inequality and $L_{\max} \leq L_{\mathrm{res}}$, we finally get
\begin{align}
\scalarp{\nabla f(\bhat{\bx}^k), \bxs - \bx^k}
& \leq   f(\bxs) - f(\bx^k) 
+ \frac{L_{\max}}{2} \sum_{t=0}^{p-1}\norm{\bhat{\bx}^{k,t} - \bhat{\bx}^{k,t+1}}^2 \nonumber\\
&\qquad+ \frac{L_{\mathrm{res}}}{2} \bigg[
\bigg(\sum_{t =0}^{p-1} \norm{\bhat{\bx}^{k,t} - \bhat{\bx}^{k,t+1}} \bigg)^2
- \sum_{t =0}^{p-1} \norm{\bhat{\bx}^{k,t} - \bhat{\bx}^{k,t+1}}^2 \bigg]\nonumber\\
& =   f(\bxs) - f(\bx^k) 
+ \frac{L_{\mathrm{res}}}{2}  \bigg(\sum_{t =0}^{p-1} 
\norm{\bhat{\bx}^{k,t} - \bhat{\bx}^{k,t+1}} \bigg)^2 \nonumber \\
&\qquad+ \left(\frac{L_{\max}}{2} - \frac{L_{\mathrm{res}}}{2} \right) \sum_{t=0}^{p-1}\norm{\bhat{\bx}^{k,t} - \bhat{\bx}^{k,t+1}}^2\nonumber\\
&\leq  f(\bxs) - f(\bx^k) 
+ \frac{\tau L_{\mathrm{res}}}{2} \sum_{h \in J(k)} \norm{\bx^{h} - \bx^{h+1}}^2. \nonumber
\end{align}
The statement follows. \qed

\vspace{2ex}
\noindent
\textbf{Proof of Lemma \ref{lem:20200126a}.}
Let $\bzz \in \bHH$. It follows from the definition of $\bxx^{+}$ that $\bxx-\bxx^{+}-\nabla \varphi(\hat{\bxx}) \in \partial \psi\left(\bxx^{+}\right) .$ Therefore, $\psi(\bzz) \geq \psi\left(\bxx^{+}\right)+\left\langle \bxx-\bxx^{+}-\nabla \varphi(\hat{\bxx}), \bzz -\bxx^{+}\right\rangle,$ hence
\begin{equation*}
\left\langle \bxx-\bxx^{+}, \bzz-\bxx^{+}\right\rangle \leq \psi(\bzz)-\psi\left(\bxx^{+}\right)+\left\langle\nabla \varphi(\hat{\bxx}), \bzz-\bxx^{+}\right\rangle.
\end{equation*}
Then,
\begin{equation*}
\begin{array}{r}
\left\langle \bxx-\bxx^{+}, \bzz-\bxx\right\rangle+\left\langle \bxx-\bxx^{+}, \bxx-\bxx^{+}\right\rangle \leq \psi(\bzz)-\psi\left(\bxx^{+}\right)+\langle\nabla \varphi(\hat{\bxx}), \bzz-\bxx\rangle+\left\langle\nabla \varphi(\hat{\bxx}), \bxx-\bxx^{+}\right\rangle.
\end{array}
\end{equation*}
Rearranging the terms the statement follows.
\qed

\section{Proofs of Section~\ref{sec:convergence}}
\label{sec:appB}

\vspace{2ex}
\noindent
\textbf{Proof of Lemma \ref{lem:20210207}.}
Let $k \in \N$.
We have, from Cauchy-Schwarz inequality, the
Young inequality and Remark~\ref{rmk:20210618a}, that
\begingroup
\allowdisplaybreaks
\begin{align*}
\langle\nabla f(\bx^k)&-\nabla f(\bhat{\bx}^k),\bbar{\bx}^{k+1}-\bx^k \rangle_{\bVV} \\[1ex] 
&\leq L_{\mathrm{res}}^{\bVV} \sum_{h \in J(k)} \|\bx^{h+1} - \bx^{h}\|_{\bVV} \|\bbar{\bx}^{k+1} - \bx^k\|_{\bVV} \\
&\leq \frac{1}{2}\left[\frac{(L_{\mathrm{res}}^{\bVV})^2}{s}\bigg(\sum_{h \in J(k)}
\|\bx^{h+1} - \bx^{h}\|_{\bVV}\bigg)^2 
+s\| \bbar{\bx}^{k+1} - \bx^k\|_{\bVV}^2\right] \\
&\leq \frac{1}{2}\left[\frac{\tau (L_{\mathrm{res}}^{\bVV})^2}{s}\left(\sum_{h = k-\tau}^{k-1}
\|\bx^{h+1} - \bx^{h}\|_{\bVV}^2\right)
+s\| \bbar{\bx}^{k+1} - \bx^k\|_{\bVV}^2\right] \\
&= \frac{s}{2}\|\bbar{\bx}^{k+1} - \bx^k\|_{\bVV}^2 
+ \frac{\tau (L_{\mathrm{res}}^{\bVV})^2}{2s}\sum_{h = k-\tau}^{k-1} 
\|\bx^{h+1} - \bx^{h}\|_{\bVV}^2, 
\end{align*}
\endgroup
Now, thanks to a decomposition of the last term by Fact \ref{fact:decomp}, we obtain 
\begin{align*}
\langle\nabla f(\bx^k)& - \nabla f(\bhat{\bx}^k), \bbar{\bx}^{k+1} - \bx^k\rangle_{\bVV}\\
&\leq \frac{s}{2}\|\bbar{\bx}^{k+1} - \bx^k\|_{\bVV}^2 
+ \frac{\tau (L_{\mathrm{res}}^{\bVV})^2}{2s} \sum_{h = k-\tau}^{k-1} 
(h- (k-\tau)+1)\|\bx^{h+1} - \bx^{h}\|_{\bVV}^2 \\
&\qquad - \frac{\tau (L_{\mathrm{res}}^{\bVV})^2}{2s} \sum_{h=k-\tau+1}^{k}
(h-(k-\tau))\|\bx^{h+1} - \bx^{h}\|_{\bVV}^2 \\
&\qquad + \frac{\tau^2 (L_{\mathrm{res}}^{\bVV})^2}{2s}
\|\bx^{k+1} - \bx^{k}\|_{\bVV}^2.
\end{align*}
We recall that $\|\bx^{k+1} - \bx^{k}\|_{\bVV}^2 = \pp_{i_{k}}|\bar{x}^{k+1}_{i_{k}} - x^k_{i_{k}}|^2$. So taking 
$$\displaystyle \uuu_k = \frac{\tau (L_{\mathrm{res}}^{\bVV})^2}{2s}\sum_{h = k-\tau}^{k-1} (h-(k-\tau)+1)\|\bx^{h+1} - \bx^{h}\|_{\bVV}^2,$$ 
we get
\begin{align*}
    \EE\big[ \langle\nabla f(\bx^k) & - \nabla f(\bhat{\bx}^k), 
    \bar{x}^{k+1} - x^k\rangle_{\bVV} \,\big\vert\,i_0,\dots,i_{k-1}\big] \\
    &\leq \frac{s}{2}\|\bbar{\bx}^{k+1} - \bx^k\|_{\bVV}^2 + \frac{\tau^2 (L_{\mathrm{res}}^{\bVV})^2}{2s}\sum_{i=0}^m \pp_i^2 |\bar{x_i}^{k+1} - x_i^k|^2 + \uuu_k - \EE\big[\uuu_{k+1}\,\big\vert\,i_0,\dots,i_{k-1}\big] 
\end{align*}
Meaning
\begin{align*}
    \langle\nabla f(\bx^k) & - \nabla f(\bhat{\bx}^k), 
    \bar{x}^{k+1} - x^k\rangle_{\bVV} \\
    &\leq \sum_{i=0}^m \pp_i\left(\frac{s}{2} + \frac{\tau^2 (L_{\mathrm{res}}^{\bVV})^2}{2s}\pp_i \right) |\bar{x_i}^{k+1} - x_i^k|^2 + \uuu_k - \EE\big[\uuu_{k+1}\,\big\vert\,i_0,\dots,i_{k-1}\big]\\
    &\leq \sum_{i=0}^m \pp_i\left(\frac{s}{2} + \frac{\tau^2 (L_{\mathrm{res}}^{\bVV})^2}{2s}\pp_{\max} \right) |\bar{x_i}^{k+1} - x_i^k|^2 + \uuu_k - \EE\big[\uuu_{k+1}\,\big\vert\,i_0,\dots,i_{k-1}\big].
\end{align*}
By minimizing $\displaystyle s \mapsto \left(\frac{s}{2} + \frac{\tau^2 (L_{\mathrm{res}}^{\bVV})^2}{2s}\pp_{\max} \right)$, we find $s=\tau L_{\mathrm{res}}^{\bVV} \sqrt{\pp_{\max}}$. We then get
\begin{align*}
    \langle\nabla f(\bx^k) & - \nabla f(\bhat{\bx}^k), 
    \bar{x}^{k+1} - x^k\rangle_{\bVV} \\
    &\leq \tau L_{\mathrm{res}}^{\bVV} \sqrt{\pp_{\max}} \sum_{i=0}^m \pp_i |\bar{x_i}^{k+1} - x_i^k|^2 + \uuu_k - \EE\big[\uuu_{k+1}\,\big\vert\,i_0,\dots,i_{k-1}\big],
\end{align*}
and $\displaystyle \uuu_k = \frac{L_{\mathrm{res}}^{\bVV}}{2\sqrt{\pp_{\max}}}\sum_{h = k-\tau}^{k-1} (h-(k-\tau)+1)\|\bx^{h+1} - \bx^{h}\|_{\bVV}^2$.
\qed

\vspace{2ex}
\noindent
\textbf{Proof of Lemma \ref{p:20190313c}.}
We have
\begin{equation}
\norm{\bx^{\iter+1} - \bx}_\bWW^2
= \sum_{i=1}^m \frac{1}{\pp_i \gamma_i} \abs{x_i^{k+1}- x_i}^2
= \frac{1}{\pp_{i_k} \gamma_{i_k}} \abs{\bbar{x}_{i_k}^{k+1}- x_{i_k}}^2
+ \norm{\bx^{\iter} - \bx}_\bWW^2 - \frac{1}{\pp_{i_k} \gamma_{i_k}} \abs{x_{i_k}^{k}- x_{i_k}}^2.
\end{equation}
Thus, taking the conditional expectation we have
\begin{equation}
\EE[\norm{\bx^{\iter+1} - \bx}_\bWW^2 \,\vert\, i_0,\dots, i_{k-1}]
= \norm{\bbar{\bx}^{\iter+1} - \bx}_{\bGammas^{-1}}^2 + \norm{\bx^{\iter} - \bx}_\bWW^2
- \norm{\bx^{\iter} - \bx}_{\bGammas^{-1}}^2
\end{equation}
and \eqref{eq:20190313c} follows.
The second equation follows from \eqref{eq:20190313c}, by choosing $\bx = \bx^\iter$.
\qed

\vspace{2ex}
\noindent
\textbf{Proof of Proposition~\ref{prop:20211112a}.}
Let $k \in \N$.
We have from the descent lemma along the $i_{k}$-th block-coordinate,
\begin{align*}
 F(\bx^{k+1})&\leq  f(\bx^k) + \langle \nabla_{i_{k}} f(\bx^k), \bar{x}^{k+1}_{i_{k}} - x^k_{i_{k}} \rangle
+ \frac{L_{i_{k}}}{2} \abs{\bar{x}^{k+1}_{i_{k}} - x^k_{i_{k}}}^2 +\sum_{i=1}^n g_i(x^{k+1}_i)\\
&= f(\bx^k) + \langle \nabla_{i_{k}} f(\bx^k), \bar{x}^{k+1}_{i_{k}} - x^k_{i_{k}} \rangle
+ \frac{L_{i_{k}}}{2} \abs{\bar{x}^{k+1}_{i_{k}} - x^k_{i_{k}}}^2 + \Big( g_{i_{k}}(x^{k+1}_{i_{k}}) + \sum_{i\neq i_{k}}^n g_i(x^{k}_i) \Big) \\
&= f(\bx^k) + \langle \nabla_{i_{k}} f(\bx^k), \bar{x}^{k+1}_{i_{k}} - x^k_{i_{k}} \rangle
+ \frac{L_{i_{k}}}{2} \abs{\bar{x}^{k+1}_{i_{k}} - x^k_{i_{k}}}^2 \\
&\qquad + \Big( g_{i_{k}}(x^{k+1}_{i_{k}}) - g_{i_{k}}(x^k_{i_{k}}) + g(\bx^k) \Big) \\
&= F(\bx^k) + \langle \nabla_{i_{k}} f(\bx^k), \bar{x}^{k+1}_{i_{k}} - x^k_{i_{k}} \rangle
+ \frac{L_{i_{k}}}{2} \abs{\bar{x}^{k+1}_{i_{k}} - x^k_{i_{k}}}^2 + \Big( g_{i_{k}}(\bar{x}^{k+1}_{i_{k}}) - g_{i_{k}}(x^k_{i_{k}}) \Big) \\
&= F(\bx^k) + \langle \nabla_{i_{k}} f(\bx^k) - \nabla_{i_{k}} f(\bhat{\bx}^k), \bar{x}^{k+1}_{i_{k}} - x^k_{i_{k}} \rangle + \frac{L_{i_{k}}}{2} \abs{\bar{x}^{k+1}_{i_{k}} - x^k_{i_{k}}}^2 \\
&\qquad + \Big( \langle \nabla_{i_{k}} f(\bhat{\bx}^k), \bar{x}^{k+1}_{i_{k}} - x^k_{i_{k}} \rangle + g_{i_{k}}(\bar{x}^{k+1}_{i_{k}}) - g_{i_{k}}(x^k_{i_{k}}) \Big).
\end{align*}
From \eqref{eq:20170925a}, we can write that
\begin{align}\label{eq:20210623a}
F(\bx^{k+1})&\leq F(\bx^k) + \langle \nabla_{i_{k}} f(\bx^k) - \nabla_{i_{k}} f(\bhat{\bx}^k), \bar{x}^{k+1}_{i_{k}} - x^k_{i_{k}} \rangle - \bigg(\frac{1}{\gamma_{i_{k}}}-\frac{L_{i_{k}}}{2}\bigg)|\bar{x}^{k+1}_{i_{k}} - x^k_{i_{k}}|^2 
\end{align}
By taking the conditional expectation and using Fact \ref{fact:durrett}, it follows:
\begingroup
\allowdisplaybreaks
\begin{align}
    \EE\big[   F(\bx^{k+1})\,&\big\vert\,i_0,\dots,i_{k-1}\big] \nonumber \\
    &\leq F(\bx^k) + \EE\big[ \langle \nabla_{i_{k}} f(\bx^k) - \nabla_{i_{k}} f(\bhat{\bx}^k), \bar{x}^{k+1}_{i_{k}} - x^k_{i_{k}} \rangle\,\big\vert\,i_0,\dots,i_{k-1}\big] \nonumber\\
    &\qquad - \sum_{i=0}^m \pp_i \Big(\frac{1}{\gamma_{i}}-\frac{L_{i}}{2}\Big)|\bar{x}^{k+1}_{i} - x^k_{i}|^2 \nonumber\\
    &= F(\bx^k) + \sum_{i=0}^m \pp_i \langle \nabla_{i} f(\bx^k) - \nabla_{i} f(\bhat{\bx}^k), \bar{x}^{k+1}_{i} - x^k_{i} \rangle \nonumber\\
    &\qquad - \sum_{i=0}^m \pp_i \Big(\frac{1}{\gamma_{i}}-\frac{L_{i}}{2}\Big)|\bar{x}^{k+1}_{i} - x^k_{i}|^2 \nonumber\\
    &= F(\bx^k) + \langle \nabla f(\bx^k) - \nabla f(\bhat{\bx}^k), \bbar{\bx}^{k+1} - \bx^k \rangle_{\bVV} \nonumber\\
    &\label{eq:20201223c}\qquad - \sum_{i=0}^m \pp_i \Big(\frac{1}{\gamma_{i}}-\frac{L_{i}}{2}\Big)|\bar{x}^{k+1}_{i} - x^k_{i}|^2.
\end{align}
\endgroup
From Lemma \ref{lem:20210207}, we have
\begin{align*}
    \langle\nabla f(\bx^k) & - \nabla f(\bhat{\bx}^k), 
    \bar{x}^{k+1} - x^k\rangle_{\bVV} \\
    &\leq \tau L_{\mathrm{res}}^{\bVV} \sqrt{\pp_{\max}} \sum_{i=0}^m \pp_i |\bar{x_i}^{k+1} - x_i^k|^2 + \uuu_k - \EE\big[\uuu_{k+1}\,\big\vert\,i_0,\dots,i_{k-1}\big],
\end{align*}
with $\displaystyle \uuu_k = \frac{L_{\mathrm{res}}^{\bVV}}{2\sqrt{\pp_{\max}}}\sum_{h = k-\tau}^{k-1} (h-(k-\tau)+1)\norm{\bx^{h+1} - \bx^{h}}_{\bVV}^2$.
We then plug this result in \eqref{eq:20201223c} obtaining
\begin{align*}
    \sum_{i=0}^m \pp_i \Big(\frac{1}{\gamma_{i}}-\frac{L_{i}}{2}\Big)
    \abs{\bar{x}^{k+1}_{i} - x^k_{i}}^2 \nonumber
    &\leq F(x^k) + \uuu_k +  \tau L_{\mathrm{res}}^{\bVV} \sqrt{\pp_{\max}}\sum_{i=0}^m \pp_i |\bar{x_i}^{k+1} - x_i^k|^2 \\
    &\qquad - \EE\big[F(\bx^{k+1})+ \uuu_{k+1}\,\big\vert\,i_0,\dots,i_{k-1}\big] .
\end{align*}
Hence
\begin{align*}
    \sum_{i=0}^m \pp_i\left(\frac{1}{\gamma_{i}}-\frac{L_{i}}{2} - \tau L_{\mathrm{res}}^{\bVV} \sqrt{\pp_{\max}} \right)|\bar{x}^{k+1}_{i} - x^k_{i}|^2
    \leq F(\bx^k) + \uuu_k - \EE\big[ F(\bx^{k+1})+ \uuu_{k+1}\,\big\vert\,i_0,\dots,i_{k-1}\big].
\end{align*}
Since $\delta < 2$, recalling \eqref{eq:20210618b}, we have, for all $i \in [m]$, 
\begin{equation*}
\left(\frac{1}{\gamma_{i}}-\frac{L_{i}}{2} - \tau L_{\mathrm{res}}^{\bVV} \sqrt{\pp_{\max}} \right)=
\frac{1}{2\gamma_i}(2 -L_{i}\gamma_i - 2\gamma_i \tau L_{\mathrm{res}}^{\bVV} \sqrt{\pp_{\max}}) 
\geq \frac{1}{2\gamma_i}(2 - \delta) > 0. 
\end{equation*}
Therefore the statement follows.
\qed

\vspace{2ex}
\noindent
\textbf{Proof of Proposition~\ref{prop:20210104}.}
Let $k \in \N$ and $\bxs \in \bHH$.
Since $\langle\nabla f(\bhat{\bx}^k), \bxs-\bx^{k}\rangle 
= \langle\nabla^{\bGammas^{-1}} f(\bhat{\bx}^k), \bxs-\bx^{k}\rangle_{\bGammas^{-1}}$ and
$\bar{\bx}^{k+1} = \prox^{\bGammas^{-1}}_g \big( \bx^{k}
-  \nabla^{\bGammas^{-1}} f(\bhat{\bx}^k) \big)$, 
we derive from Lemma \ref{lem:20200126a} above written in weighted norm that
\begin{align}
\label{eq:20210103a}
\langle\bx^{k}-\bbar{\bx}^{k+1}, \bxs - \bx^{k}\rangle_{\bGammas^{-1}} &\leq
g(\bxs)-g(\bx^{k})+\langle\nabla f(\bhat{\bx}^k), \bxs -\bx^{k}\rangle \nonumber \\
&\qquad +g(\bx^{k}) -g(\bbar{\bx}^{k+1}) 
+\langle\nabla f(\bhat{\bx}^k), \bx^{k}-\bbar{\bx}^{k+1}\rangle \nonumber \\
&\qquad -\lVert \bx^{k}-\bbar{\bx}^{k+1}\rVert^{2}_{\bGammas^{-1}}.
\end{align}
From Lemma \ref{eq:20210308a}, we have
\begin{align*}
\langle\nabla f(\bhat{\bx}^k), \bxs-\bx^{k}\rangle &\leq f(\bxs) - f(\bx^k) 
+ \frac{\tau L_{\mathrm{res}}}{2}\sum_{h \in J(k)} \|\bx^h - \bx^{h+1}\|^2.
\end{align*}
So \eqref{eq:20210103a} becomes
\begin{align}
\label{eq:20210103b}
\langle\bx^{k}-\bbar{\bx}^{k+1}, \bxs - \bx^{k}\rangle_{\bGammas^{-1}} 
&\leq F(\bxs)-F(\bx^{k})+ \frac{\tau L_{\mathrm{res}}}{2}\sum_{h \in J(k)} \|\bx^h - \bx^{h+1}\|^2 
\nonumber \\
&\qquad + g(\bx^{k}) - g(\bbar{\bx}^{k+1}) 
+ \langle\nabla f(\bhat{\bx}^k), \bx^{k}-\bbar{\bx}^{k+1}\rangle \nonumber \\[1ex]
&\qquad -\|\bx^{k}-\bbar{\bx}^{k+1}\|^{2}_{\bGammas^{-1}}.
\end{align}
Next, recalling that $x^k$ and $x^{k+1}$ differs only in the $i_k$-th component, we have
\begin{align*}
g(\bx^{k})-g(\bbar{\bx}^{k+1}) &+\langle\nabla f(\bhat{\bx}^{k}), \bx^{k}-\bbar{\bx}^{k+1}\rangle \\
&=\EE\left[\sum_{i=1}^{m} \frac{1}{\pp_{i}}\big(g_{i}(x_{i}^{k})-g_{i}(x_{i}^{k+1})+\langle\nabla_{i}f(\bhat{\bx}^{k}), x_{i}^{k}-x_{i}^{k+1}\rangle\big)\,\vert\, i_0,\dots, i_{k-1}\right]
\end{align*}
Moreover,
\begin{align*}
\sum_{i=1}^{m}& \frac{1}{\pp_{i}}\big(g_{i}(x_{i}^{k})-g_{i}(x_{i}^{k+1})+\langle\nabla_{i} f(\bhat{\bx}^{k}), x_{i}^{k}-x_{i}^{k+1}\rangle\big) \\
&=\frac{1}{\pp_{\min} }\big(g(\bx^{k})-g(\bx^{k+1})+\langle\nabla f(\bhat{\bx}^{k}), \bx^{k}-\bx^{k+1}\rangle\big) \\
&\quad-\sum_{i=1}^{m}(\underbrace{\frac{1}{\pp_{\min}}-\frac{1}{\pp_{i}}}_{\geq 0})
\big(g_{i}(x_{i}^{k})-g_{i}(x_{i}^{k+1})+\langle\nabla_{i} f(\bhat{\bx}^{k}), x_{i}^{k}-x_{i}^{k+1}\rangle\big) \\
&\leq \frac{1}{\pp_{\min} }\big(g(\bx^{k})-g(\bx^{k+1})+\langle\nabla f(\bhat{\bx}^{k}), \bx^{k}-\bx^{k+1}\rangle\big) \\
&\quad-\left(\frac{1}{\pp_{\min }}-\frac{1}{\pp_{i_k}}\right) \frac{1}{\gamma_{i_k}}|\Delta_{i_k}^{k}|^{2}
\end{align*}
where in the last inequality we used that 
\begin{equation*}
-\left(g_{i_k}(x_{i_k}^{k})-g_{i_k}(x_{i_k}^{k+1})+\langle\nabla_{i_k} f(\bhat{\bx}^{k}), x_{i_k}^{k}-x_{i_k}^{k+1}\rangle\right) \leq-\frac{1}{\gamma_{i_k}}|\Delta_{i_k}^{k}|^{2},
\end{equation*}
which was derived from \eqref{eq:20170925a}.
So
\begin{align*}
g(\bx^{k})-g(\bbar{\bx}^{k+1}) &+ \langle\nabla f(\bhat{\bx}^{k}), \bx^{k}-\bbar{\bx}^{k+1}\rangle \\
&\leq \frac{1}{\pp_{\min}} \EE\big[g(\bx^{k})-g(\bx^{k+1})+\langle\nabla f(\bhat{\bx}^{k}), \bx^{k}-\bx^{k+1}\rangle\,\big\vert\, i_0,\dots, i_{k-1}\big] \\
&\quad - \frac{1}{\pp_{\min}}\sum_{i=1}^{m}\frac{\pp_i}{\gamma_i} |\Delta_{i}^{k}|^{2} + \|\bx^k - \bar{\bx}^{k+1}\|^2_{\bGammas^{-1}} .
\end{align*}
Now, by Lemma \ref{lem:20210207} and the 
block-coordinate descent lemma \eqref{eq:20170921m}, we have
\begin{align*}
\EE[&\langle\nabla f(\bhat{\bx}^{k}), \bx^{k}-\bx^{k+1}\rangle\,\vert\, i_0,\dots, i_{k-1}]\\[1ex]
&\leq \EE\big[\langle\nabla f(\bhat{\bx}^{k}) 
- \nabla f(\bx^k), \bx^{k}-\bx^{k+1}\rangle\,\big\vert\, i_0,\dots, i_{k-1}\big] 
+ \EE\big[\langle\nabla f(\bx^{k}), \bx^{k}-\bx^{k+1}\rangle\,\big\vert\, i_0,\dots, i_{k-1}\big]\\[1ex]
&= \langle\nabla f(\bhat{\bx}^{k}) - \nabla f(\bx^k), \bx^{k}-\bbar{\bx}^{k+1}\rangle_{\bVV} 
+ \EE[\langle\nabla f(\bx^{k}), \bx^{k}-\bx^{k+1}\rangle\,\vert\, i_0,\dots, i_{k-1}]\\
&\leq \tau L_{\mathrm{res}}^{\bVV} \sqrt{\pp_{\max}} \sum_{i=0}^m \pp_i \lvert\bar{x_i}^{k+1} 
- x_i^k\rvert^2 + \uuu_k - \EE\big[\uuu_{k+1}\,\big\vert\,i_0,\dots,i_{k-1}\big] \\ 
&\qquad + \EE\Big[f(\bx^{k})-f(\bx^{k+1})
+ \frac{L_{i_k}}{2}|\Delta^k_{i_k}|^2\,\Big\vert\, i_0,\dots, i_{k-1}\Big],
\end{align*}
where $\uuu_k = L_{\mathrm{res}}^{\bVV}/(2\sqrt{\pp_{\max}})\sum_{h = k-\tau}^{k-1} (h-(k-\tau)+1)\|\bx^{h+1} - \bx^{h}\|_{\bVV}^2$ for all $k \in \mathbb{N}$.
Therefore
\begin{align}
\nonumber
g(\bx^{k})-g(\bbar{\bx}^{k+1}) &+ \langle\nabla f(\bhat{\bx}^{k}), \bx^{k}-\bbar{\bx}^{k+1}\rangle \\
\nonumber&\leq \frac{1}{\pp_{\min}} \EE[F(\bx^{k})+\uuu_k-F(x^{k+1})-\uuu_{k+1}\,\vert\, i_0,\dots, i_{k-1}] \\
\label{eq:20210618c}&\quad + \frac{1}{\pp_{\min}}\sum_{i=1}^{m}\pp_i\left(\frac{L_i}{2} + \tau L_{\mathrm{res}}^{\bVV} \sqrt{\pp_{\max}} - \frac{1}{\gamma_i} \right) |\Delta_{i}^{k}|^{2} + \|\bx^k - \bbar{\bx}^{k+1}\|^2_{\bGammas^{-1}}.
\end{align}
Since $\gamma_i L_i + 2 \gamma_i \tau L_{\mathrm{res}}^{\bVV} \sqrt{\pp_{\max}} \leq \delta < 2$, 
we have 
\begin{equation*}
\frac{L_i}{2} + \tau L_{\mathrm{res}}^{\bVV} \sqrt{\pp_{\max}} - \frac{1}{\gamma_i} 
= \frac{1}{2 \gamma_i}
(\gamma_i L_i + 2 \gamma_i \tau L_{\mathrm{res}}^{\bVV} \sqrt{\pp_{\max}} - 2)< 0,
\end{equation*}
 and hence \eqref{eq:20210618c} yields
\begin{align*}
g(\bx^{k})-g(\bbar{\bx}^{k+1}) &+ \langle\nabla f(\bhat{\bx}^{k}), \bx^{k}-\bbar{\bx}^{k+1}\rangle \\
&\leq \frac{1}{\pp_{\min}} \EE[F(\bx^{k})+\uuu_k-F(x^{k+1})-\uuu_{k+1}\,\vert\, i_0,\dots, i_{k-1}] \\
&\quad +\frac{\delta - 2 }{2} \sum_{i=1}^{m}\frac{1}{\gamma_i}|\Delta_{i}^{k}|^{2} + \|\bx^k - \bbar{\bx}^{k+1}\|^2_{\bGammas^{-1}}.
\end{align*}
The statement follows from \eqref{eq:20210103b}.
\qed

\vspace{2ex}
\noindent
\textbf{Proof of Proposition~\ref{prop:20210519}.}
We know that
\begin{align*}
\norm{\bx^{k}-\bxs}^{2}_{\bGammas^{-1}} - \norm{\bbar{\bx}^{k+1}-\bxs}^{2}_{\bGammas^{-1}} 
&=-\norm{\bx^{k}-\bbar{\bx}^{k+1}}^{2}_{\bGammas^{-1}} 
+2 \langle \bx^{k}-\bbar{\bx}^{k+1}, \bx^{k}-\bxs\rangle_{\bGammas^{-1}}.
\end{align*}
We derive from Proposition~\ref{prop:20210104}, multiplied by 2, that
\begin{align}\label{eq:20210105c}
\norm{\bbar{\bx}^{k+1}-\bxs}^{2}_{\bGammas^{-1}} 
&\leq \norm{\bx^{k}-\bxs}^{2}_{\bGammas^{-1}} \nonumber \\
&\qquad + \frac{2}{\pp_{\min }}
\EE\big[F(\bx^k) + \uuu_k -F(\bx^{k+1}) - \uuu_{k+1} \,\vert\, i_0,\dots, i_{k-1}\big] \nonumber \\
&\qquad + 2(F(\bxs) - F(\bx^k)) 
+ \tau L_{\mathrm{res}}\sum_{h \in J(k)} \|\bx^h - \bx^{h+1}\|^2 \nonumber \\
&\qquad (\delta-1)\|\bx^k - \bbar{\bx}^{k+1}\|^2_{\bGammas^{-1}}.
\end{align}
where $\uuu_k = L_{\mathrm{res}}^{\bVV}/(2\sqrt{\pp_{\max}})
\sum_{h = k-\tau}^{k-1} (h-(k-\tau)+1)\norm{\bx^{h+1} - \bx^{h}}_{\bVV}^2$. 
It follows from Lemma~\ref{p:20190313c}  that 
\begin{align}\label{eq:20210401b}
\EE\big[\|\bx^{k+1}-\bxs\|^{2}_{\bWW}\,&\vert\, i_0,\dots, i_{k-1}\big] \nonumber\\
&\leq \|\bx^{k}-\bxs\|^{2}_{\bWW} \nonumber \\
&\quad+(\delta-1) \|\bx^k - \bbar{\bx}^{k+1}\|^2_{\bGammas^{-1}} \nonumber \\
&\quad + \frac{2}{\pp_{\min }}
\EE\big[F(\bx^k)+\uuu_k-F(\bx^{k+1})-\uuu_{k+1}\,\vert\, i_0,\dots, i_{k-1}\big] \nonumber \\
&\quad + 2(F(\bxs) - F(\bx^k)) + \tau L_{\mathrm{res}}\sum_{h \in J(k)} \|\bx^h - \bx^{h+1}\|^2.
\end{align}
Plugging \eqref{eq:20210106a} in \eqref{eq:20210401b} the statement follows.
\qed

\vspace{2ex}
\noindent
\textbf{Proof of Proposition~\ref{prop:20200126a}.}
Let $k \in \N$ and $\bxs \in \bHH$.
From Proposition \ref{prop:20210519}, we have
\begin{align*}
\EE\big[&\|\bx^{k+1}-\bxs\|^{2}_{\bWW}\,\vert\, i_0,\dots, i_{k-1}\big]  \\
&\leq \|\bx^{k}-\bxs\|^{2}_{\bWW} \\
&\quad + \frac{2}{\pp_{\min }}\left(\frac{(\delta-1)_+}{2-\delta}+ 1\right) 
\EE\big[F(\bx^k)+\uuu_k-F(\bx^{k+1})-\uuu_{k+1}\,\vert\, i_0,\dots, i_{k-1}\big]  \\
&\quad + \tau L_{\mathrm{res}} \sum_{h \in J(k)} \|\bx^h - \bx^{h+1}\|^2 \\
&\quad + 2(F(\bxs) - \EE\big[F(\bx^{k+1})+\uuu_{k+1}\,\vert\, i_0,\dots, i_{k-1}\big]). \\[1ex]
&\quad - 2(\EE\big[ F(\bx^{k})+\uuu_{k} - F(\bx^{k+1})-\uuu_{k+1}\,\vert\, i_0,\dots, i_{k-1}\big])
+ 2 \uuu_k
\end{align*}
Set for all $k \in \mathbb{N}$,
\begin{align*}
    \xi_k &= 2\left(\frac{\max\{1,(2-\delta)^{-1}\}}{\pp_{\min }}- 1\right) \EE\big[F(\bx^k)+\uuu_k-F(\bx^{k+1})-\uuu_{k+1}\,\vert\, i_0,\dots, i_{k-1}\big] \\
    &\quad + \tau L_{\mathrm{res}}\sum_{h \in J(k)} \|\bx^h - \bx^{h+1}\|^2 + 2 \uuu_k.
\end{align*}
Now, on the one hand, recalling \eqref{eq:20201223m}, \eqref{eq:20210106a}
and Lemma~\ref{p:20190313c},
we have
\begin{align*}
\EE\bigg[\sum_{k \in \mathbb{N}} \sum_{h \in J(k)} \|\bx^h - \bx^{h+1}\|^2 \bigg]
&\leq \tau \gamma_{\max} \pp_{\max}  \sum_{k \in \mathbb{N}} \EE [\|\bx^k - \bx^{k+1}\|^2_{\bWW}]\\
& \leq \frac{2 \tau \gamma_{\max} \pp_{\max}}{(2 - \delta) \pp_{\min}} \sum_{k \in \N}
 \big( \EE[ F(\bx^{k})+ \uuu_{k}] - \EE[F(\bx^{k+1})- \uuu_{k+1}] \big)\\
& \leq \frac{2 \tau \gamma_{\max} \pp_{\max}}{(2 - \delta) \pp_{\min}} (F(\bx^0) + \uuu_0 - F^*)<+\infty
\end{align*}
Recalling the definition of $\uuu_k$ in Proposition~\ref{prop:20210104} and of 
$L_{\mathrm{res}}^{\bVV}$ in Remark~\ref{rmk:20210618a}, this also yields
\begin{align*}
\EE\bigg[\sum_{k \in \N} \uuu_k\bigg] &\leq \frac{\tau L_{\mathrm{res}}^{\bVV}}{2\sqrt{\pp_{\max}}}
\EE\bigg[\sum_{k \in \mathbb{N}} \sum_{h = k-\tau}^{k-1} \norm{\bx^h - \bx^{h+1}}_{\bVV}^2 \bigg] \\
&\leq \frac{\tau L_{\mathrm{res}}^{\bVV} \pp_{\max}}{2\sqrt{\pp_{\max}}}
\EE\bigg[\sum_{k \in \mathbb{N}} \sum_{h = k-\tau}^{k-1} \norm{\bx^h - \bx^{h+1}}^2 \bigg]\\
&\leq \frac{\tau L_{\mathrm{res}} \pp_{\max}^2}{\sqrt{\pp_{\min}}}
\frac{\tau \gamma_{\max}}{(2 - \delta) \pp_{\min}} (F(\bx^0) + \uuu_0 - F^*).
\end{align*}
On the other hand, setting $\eta_k = F(\bx^k)+\uuu_k - \EE\big[F(\bx^{k+1})-\uuu_{k+1}\,\vert\, i_0,\dots, i_{k-1}\big]$, which in virtue of \eqref{eq:20210106a} is positive $\PP$-a.s., we have
\begin{equation*}
\EE\bigg[\sum_{k \in \N} \eta_k\bigg] = \sum_{k \in \N} \EE[\eta_k]
= \sup_{n \in \N} \sum_{k=0}^n \EE[F(\bx^k)+\uuu_k] - \EE[F(\bx^{k+1})-\uuu_{k+1}]
\leq F(\bx^0)+\uuu_0 - F^*<+\infty.
\end{equation*}
Let $\displaystyle C = \frac{\max\left\{1,(2-\delta)^{-1}\right\}}{\pp_{\min}} -1
+ \tau^2\frac{L_{\mathrm{res}}\gamma_{\max}\pp_{\max}}{\pp_{\min}(2-\delta)}
\left( 1 + \frac{\pp_{\max}}{\sqrt{\pp_{\min}}}\right)$. We then get
\begin{equation*}
    \sum_{k \in \N}\EE[\xi_k] \leq 2 C(F(\bx^0)  - F^*).
\end{equation*}
We remark that $(\forall\, i \in [m])\quad\gamma_i( L_{i} + 2\tau L_{\mathrm{res}}\pp_{\max}/ \sqrt{\pp_{\min}})< 2$. So $\gamma_i \tau L_{\mathrm{res}}< \frac{2-\gamma_i L_{i}}{2} \frac{\sqrt{\pp_{\min}}}{\pp_{\max}}$. This implies $\tau \gamma_{\max} L_{\mathrm{res}}< \frac{2-\gamma_{\max} L_{i_0}}{2} \frac{\sqrt{\pp_{\min}}}{\pp_{\max}}$, where $i_0 \in [m]$ such that $\gamma_{i_0} = \gamma_{\max}$. Thus
\begin{equation}\label{eq:20211209a}
    \tau \gamma_{\max} L_{\mathrm{res}} < \frac{2-\gamma_{\max} L_{\min}}{2} \frac{\sqrt{\pp_{\min}}}{\pp_{\max}}.
\end{equation}
Using this in $C$, we get
 \begin{align*}
     C &\leq \frac{\max\left\{1,(2-\delta)^{-1}\right\}}{\pp_{\min}} -1
+ \tau\frac{2-\gamma_{\max} L_{\min}}{2\sqrt{\pp_{\min}}(2-\delta)}
\left( 1 + \frac{\pp_{\max}}{\sqrt{\pp_{\min}}}\right) \\
    &\leq \frac{\max\left\{1,(2-\delta)^{-1}\right\}}{\pp_{\min}} -1
+ \tau\frac{1}{\sqrt{\pp_{\min}}(2-\delta)}
\left( 1 + \frac{\pp_{\max}}{\sqrt{\pp_{\min}}}\right).
 \end{align*}
 The statement follows.
\qed

\vspace{2ex}
\noindent
\textbf{Proof of Proposition~\ref{prop:20200126b}.}
It follows from \eqref{eq:20210106a} that
\begin{align*}
(2-\delta)\frac{\pp_{\min}}{2}\EE\big[\norm{\bbar{\bx}^{k+1} - \bx^k}^2_{\bGammas^{-1}}\big] 
\leq \EE\big[ F(\bx^{k})+ \uuu_{k}\big] - \EE\big[ F(\bx^{k+1})+ \uuu_{k+1}\big].
\end{align*}
This means that $\displaystyle \big(\EE[F(\bx^{k})+ \uuu_{k}]\big)_{k \in \mathbb{N}}$ is a nonincreasing sequence and 
\begin{align*}
(2-\delta) \frac{\pp_{\min}}{2} \EE\bigg[\sum_{k \in \N}\norm{\bbar{\bx}^{k+1} - \bx^k}^2_{\bGammas^{-1}}\bigg] 
&= (2-\delta) \frac{\pp_{\min}}{2} \sup_{k \in \N}\sum_{h=0}^k  \EE\big[\|\bbar{\bx}^{h+1} - \bx^h\|^2_{\bGammas^{-1}}\big]\\
&\leq \sup_{k \in \N} \EE\big[ F(\bx^{0})+ \uuu_{0}\big] - \EE\big[ F(\bx^{k+1})+ \uuu_{k+1}\big]\\[1ex]
&\leq F(\bx^{0}) + \uuu_0 - F^*<+\infty.
\end{align*}
Therefore, since $\norm{\cdot}^2 \leq (\max_{i} \gamma_i)\norm{\cdot}_{\bGammas^{-1}}^2$, we derive that
\begin{align}
\label{eq:20201223h}\sum_{k \in \mathbb{N}} \norm{\bbar{\bx}^{k+1} - \bx^k}^2 
< \infty \quad \PP\text{-a.s}.
\end{align}
So, it follows that
\begin{align}
\label{eq:20201223i}\norm{\bbar{\bx}^{k+1} - \bx^k} \rightarrow 0 \quad \PP\text{-a.s},
\end{align}
and, since $\norm{\bx^{k+1} - \bx^k} \leq \norm{\bbar{\bx}^{k+1} - \bx^k}$ for all $k \in \mathbb{N}$, we have also
\begin{align}
     \label{eq:20201223j} \sum_{k \in \mathbb{N}} \norm{\bx^{k+1} - \bx^k}^2 < \infty \text{ and } \norm{\bx^{k+1} - \bx^k} \rightarrow 0 \quad \PP\text{-a.s}.
\end{align}
Now, by Lemma~\ref{p:20170918c}, we have $\norm{\bhat{\bx}^k - \bx^k}^2 \leq \tau\sum_{h \in J(k)} \norm{\bx^h - \bx^{h+1}}^2$ and, moreover,
\begin{equation}\label{eq:20201223m}
  \sum_{k \in \mathbb{N}} \sum_{h \in J(k)} \|\bx^h - \bx^{h+1}\|^2 
  \leq \sum_{k \in \N} \tau \norm{\bm{x}^{k} - \bm{x}^{k+1}}^2
   < \infty\ \ \PP\text{-a.s.},
\end{equation}
so that
\begin{align}
\label{eq:20201223l} 
\norm{\bbar{\bx}^{k+1} 
- \bhat{\bx}^k} \leq \norm{\bbar{\bx}^{k+1} - \bx^k} + \norm{\bx^k - \bhat{\bx}^k} \to 0
\ \ \PP\text{-a.s.}
\end{align}
Define, for all $i \in [m]$,
\begin{equation}
\label{eq:20210619a}
v^k_i = \nabla_i f(\bbar{\bx}^{k+1}) - \nabla_i f(\bhat{\bx}^k) + \frac{\Delta_i^{\!k}}{\gamma_i}.
\end{equation}
Then, thanks to the second equation in \eqref{eq:20170921l}, we have
\begin{equation}
\bm{v}^k = (v^k_1, \cdots, v^k_m) \in \nabla f(\bbar{\bx}^{k+1}) + \partial g(\bbar{\bx}^{k+1})
=\partial \left(f + g\right)(\bbar{\bx}^{k+1}).
\end{equation}
Moreover, since $\nabla f$ is Lipschitz continuous,
definition \eqref{eq:20210619a} and equations \eqref{eq:20201223i}, \eqref{eq:20201223l} 
yield $\bm{v}^k \to 0$ $\PP$-a.s. 
\qed
\bibliographystyle{abbrv} 
\bibliography{manuscript}
\end{document}